  \documentclass[11pt,a4paper]{article}   
%  \documentclass[10pt,a4paper]{article}
% 
% \documentclass[12pt,a4paper]{book}   
%\documentclass{letter} 

%\usepackage{germanb}      % Deutsche TeX-Eigenheiten %\usepackage{isolatin1}   % Eingabekodierung mit Umlauten...
   
%Nur so
%%
\usepackage{pictex,color} 
 \usepackage{dcpic}
\usepackage{makeidx}
\makeindex            % damit eine Indexdatei angelegt wird   

 \usepackage{amsmath}  % allgemeine Mathe-Erweiterungen  
 \usepackage{amssymb}  % Symbole und Schriftarten
 \usepackage{amsthm}   % erweiterte Theorem-Umgebungen

% \usepackage[all]{xy}      % Das Paket mit allem, was man so braucht
% \UseComputerModernTips    % Pfeilspitzen wie im normalen Mathe-Modus
% \CompileMatrices          % Damit geht es etwas schneller.

%\usepackage{mathrsfs}  % gibt den Befehl "\mathscr{}" für schöne
                       % Mathe-Skript-Buchstaben

% \author{G. Harder} 
 
% \author{G. Harder}
 \title{\centerline     {  Harish-Chandra Modules over $\Bbb Z$}}

\author{ G\"unter Harder  } 
 
% \date{\today}   % Standard: Datum der Kompilierung ("\today").

\overfullrule=0pt
%\NoRunningHeads  
 
%%%%%%%%%%%%%%%%%%%%%%%%%%%%%%%%%%%%%%%%%%%%%
 
%\input macros
          
\newtheorem{satz}{Theorem}[section]
\newtheorem{prop}{Proposition}[section]
 
\begin{document}  
   
 \maketitle
 
%\begin{abstract}
%\end{abstract}

 \tableofcontents 
 %%%%%%%%%%%%%%%%%%%%%%%%%%%%%%%%%%%%%%%%%%%%%
 
\newcommand \iso{ \buildrel \sim \over\longrightarrow} 
\font\kapitel=cmbx10 scaled \magstep2
%\font\section=cmbx10 scaled \magstep1
\newcommand\bal{\begin{align}}
\newcommand\eal{\end{align}}
\newcommand\bpm{\begin{pmatrix}}
\newcommand\epm{\end{pmatrix}}
\newcommand\bm{\begin{matrix}}
\newcommand\edm{\end{matrix}}
\newcommand\bc{\begin{cases}} 
\newcommand\ec{\end{cases}} 
\newcommand\beq{\begin{equation}}
\newcommand\eeq{\end{equation}}
 \newcommand \Imm{\text{   Im}}
 %Griechen
\newcommand \ag{{\alpha}}
\newcommand \bg{{\beta}} 
\newcommand \g{ \gamma}
\newcommand\sa{s_{\ag}}
\newcommand\ssb{s_{\bg}}
\newcommand\saa{s_\ag\cdot\chi}
\newcommand\sbb{s_\bg\cdot\chi}
\newcommand \gag{{\gamma_\alpha}}
\newcommand \gbg{{\gamma_\beta}} 
\newcommand\gga{\gamma_\ag}
\newcommand\ggb{\gamma_\bg}
\newcommand\gzz{\V{\ul \gamma}^{\ul z}}
\newcommand\gzp{\V{\ul \gamma}^{\ul z^\prime}}
\newcommand\la{\lambda}
\newcommand\ta{\tilde{a}}
\newcommand\tb{\tilde{b}}
\newcommand\vv {{\sf v}} 
\newcommand\cp{{\chi^\prime}}
\newcommand\cd{{\chi^\dagger}}
\newcommand\ak{\alpha_{r_w(k)}}
\newcommand\tl{\tilde\lambda} 
\newcommand\tne{\tilde{n}_1} 
\newcommand\tnz{\tilde{n}_2} 
\newcommand\nee{n_1}
\newcommand\nz{n_2}
\newcommand\geins{\gamma_1}
\newcommand\gz{\gamma_2}
\newcommand\gMe{\gamma_1^{M_1}}
\newcommand\ech{{\epsilon(\mu)}}
\newcommand \ep{e(\phi)}
\newcommand \fgK{ \frak g,K_\infty}
\newcommand\GKZ{ \frak {g}_\Z,\KK}
\newcommand\GKe{ \frak {g}_\Z,\Ke}
\newcommand\tKK{\tilde{\KK}}
\newcommand \lgk {\Hom_{K_\infty}(\Lambda^\pkt(\fg/\fk),}
\newcommand \lgke {\Hom_{K_\infty}(\Lambda^1(\fg/\fk),}
\newcommand \lgkn {\Hom_{K_\infty}(\Lambda^0(\fg/\fk),}
\newcommand\lgt{\Hom_{T_c}( \Lambda^\pkt(\fg/\fk)}
\newcommand\Aql{A_{\frak q}(\lambda)}
\newcommand\Aqm{A_{\frak q}(\mu)}
\newcommand\Ts{T_{1,\text{split}}}
\newcommand\Tc{T_1^c}
\newcommand\mP{^\circ P}
\newcommand\rn{^\circ r}
\newcommand\wu{w_{\text{un}}}
\newcommand\cm{^\circ \frak{m}} 
\newcommand\cme{^\circ \frak{m}^{(1)}}
\newcommand\ck{^\circ \fk}
\newcommand\Mc{^\circ  M}
\newcommand\KMc{\KK^{^\circ  M}}
\newcommand\mm{\mu^{(1)}}
\newcommand\tm{{\tilde{\mu }}}
\newcommand\tmp{{\tilde{\mu }^\prime}}
\newcommand\mab{\mu_{\rm ab}}
\newcommand\pil{\pi_\infty^{(\lambda)}}
%end Griechen
\newcommand \fg{ \frak g}
\newcommand \fm{ \frak m}
\newcommand \fa{ \frak a}
\newcommand \fu{ \frak u}
\newcommand\tfk{\tilde{\fk}}
\newcommand\fc{\frak c}
\newcommand\fU{\frak{U}}
\newcommand\fP{\frak{P}}
\newcommand\fQ{\frak{Q}}
\newcommand\slz{\frak{sl}_2}
\newcommand \red{\textcolor{red}{\text{red}}}
\newcommand \blue{\textcolor{blue}{\text{blue}}}
\newcommand \ft{ \frak t}
\newcommand \fb{ \frak b}
\newcommand \fp{ \frak p}
\newcommand \fk{ \frak k}
\newcommand \fnx{ \frak{n}(\ul x_f)}
\newcommand\fI{\frak{Ind}}
\newcommand\utl{u^{(\lambda)}_{t_p}}

\newcommand\me{\mu^{(1)}}

\newcommand \fnt{ \frak{n}(\ul t_p)}
\newcommand \FU{ {\frak U}(\fg)}
\newcommand \ZG{\frak Z(\fg)}
\newcommand \GZ{\Spec(\Z)}
\newcommand \GQ{\Spec(\Q)}
\newcommand \MZ{{\cal M}_\Z}
\newcommand \MZmc{{\mathcal {M}}_\Z}
\newcommand \MM{{\cal M}} 
\newcommand\wtM{ \widetilde{ \M\otimes_\Q\A_f}}
\newcommand\ch{\text{\bf ch}}
\newcommand\koker{\text{koker}}
\newcommand\ol{\otimes \Q_\ell}
\newcommand\fin{\text{fin}}
\newcommand\chx{\text{\bf ch}(\ul x_f)}
\newcommand\cht{\text{\bf ch}(\ul t_p)}
\newcommand\Gam{\Gamma}
\newcommand \into{\hookrightarrow}
\newcommand\SGK{{\cal S}^G_{K_f}}
\newcommand\BSC{ \overline{\SGK}}
\newcommand\PBSC{\partial\SGK}
\newcommand\SG{{\cal S}^G }
\newcommand\ST{{\cal S}^T }
\newcommand\Ge{G^{(1)} }
\newcommand\Te{T^{(1)}}
\newcommand\XT{X^*(T)}
\newcommand\XTe{X^*(T^{(1)})}
\newcommand\IC{I(F,\C)}
\newcommand\ip{\pi_f\circ\iota}
\newcommand\SMi{{\cal S}^{M_i} }
\newcommand\SMe{{\cal S}^{M_1}  }
\newcommand\SMz{{\cal S}^{M_2}  }
\newcommand\SM{{\cal S}^{M} }
\newcommand\SGBS{{\bar{\cal S}}^{G}}
\newcommand\BBS{\partial(\SGBS)}
\newcommand\SGKp{{\cal S}^G_{K^\prime_f}}
\newcommand\piKK{{ \pi_{K_f^\prime,K_f}}}
\newcommand\Ke{{\cal K}^{(1)}}
\newcommand\LGZ{{\frak g}_\Z}
\newcommand\LZK{\Lambda^\pkt(\fg_\Z/\fk_\Z)}
\newcommand\LGe{{\frak g}^{1)}_\Z}
\newcommand\KK{{\cal K}}
 \newcommand\SMKP{S^{M_P}_{K_f^{M_P}}}
 \newcommand\SMK{S^M_{K_f^M}}
  \newcommand\SMKQ{S^{M_Q}_{K_f^{M_Q}}}
\newcommand\HMK{ \HH^M_{K_f^M}}
\newcommand\CiM{\Coh(H^\pkt_!(\SGK,\tM_F))}
\newcommand\Ld{  L^2_{\text{disc}}(G(\Q)\bs G(\A_f)/K_f)}
\newcommand\Coh{\text{Coh}}
\newcommand\coh{\text{coh}}
\newcommand\cu{\text{cusp}}
\newcommand\arith{\text{arith}}
\newcommand\cusp{\text{cusp}}
\newcommand\trans{\text{trans}}
\newcommand\Lr{L^{\text{rat}}}
\newcommand\Lcoh{L^{\text{coh}}}
\newcommand\Lc{  L^2_{\text{cusp}}(G(\Q)\bs G(\A_f)/K_f)}
\newcommand\Wp{W_{\pi_\infty\times \ip}}
\newcommand\Wpc{W^{\text{cusp}}_{\pi_\infty\times \pi_f\circ\iota}}
\newcommand\Hsw{H_{\sigma_\infty}\otimes \M(w\cdot\lambda))}
\newcommand\Hs{H_{\sigma_\infty}}
\newcommand\Hsp{H_{\sigma_\infty^{\prime}}}
\newcommand\vgP{\vert\gamma_P\vert}
\newcommand\vd{\vert\delta\vert}
\newcommand\spitz[1]{\'{#1}}
\newcommand\schwer[1]{ \`{#1}}
\newcommand\BSB[1]{\partial_{#1} \overline{\SGK}}
\newcommand\sein{'s }
\newcommand \bs{\backslash}
\newcommand\V[1]{\vert{#1}\vert}
\newcommand\rs{\text {(resp. }}
\newcommand \Div{\hbox {\rm Div}}

\newcommand \type{\hbox {\rm type}}  
\newcommand \tr{\hbox{\rm tr}}
\newcommand \Tr{\hbox{\rm Tr}}
\newcommand \Gal{\hbox{\rm   Gal}}
\newcommand \Spur{\hbox{\rm  Spur}}
\newcommand \HupM{H^{\scriptscriptstyle\bullet} ({{{\frak u}_P},{\cal M}}) } 
\newcommand \teiltnicht{\mathrel{\raise1pt\hbox to
.5pt{$\scriptstyle/$\hss}\vert}} 
\newcommand \Sym{\hbox {\rm Sym}}
 \newcommand\Isa{I_{\sigma\otimes \vert \gamma_\alpha\vert^{z_\alpha}}} 
\newcommand\IsaT{I_{\Theta_P\sigma\otimes \vert \gamma_\alpha\vert^{-z_\alpha}}} 
 
 \newcommand \Ind{\hbox{\rm  Ind}}
  \newcommand \Iun{\hbox{\rm  Indunit}}
\newcommand \vol{\hbox{\rm   vol}}
\newcommand \os{\omega(\sigma)}
\newcommand \loc{ {\rm  loc}}
\newcommand \Eis{ {\rm  Eis}}
\newcommand\Seq{${\rm (Seq)}$ }
\newcommand \prolim{\displaystyle{\lim_{\leftarrow}}}
\newcommand \colim{\displaystyle{\lim_{n\rightarrow\infty}}}
\newcommand \Ens{{\bf Ens}}
\newcommand \Id{\hbox{\rm  \rm Id}}
\newcommand \rank{\hbox{\rm   rank}}
\newcommand \Ext{\hbox{\rm  \rm Ext}}
\newcommand \End{\hbox{\rm  \rm End}}
\newcommand \Top{{\bf Top}}
\newcommand \Vect{{\bf Vect}}
\newcommand \Mod{{\bf Mod}}
\newcommand \hol {\hbox{\rm   hol}}
\newcommand \ganz {\text{   int}}
\newcommand \Aut{{\hbox{\rm  \rm Aut}}}
\newcommand \cris{\hbox{\rm   cris}}
\newcommand \integ{ {\rm  int}}
\newcommand \Res{\hbox{\rm   Res}}
\newcommand \Hom{\hbox{\rm   Hom}}
\newcommand \ppfeil[1]{\buildrel #1\over \longrightarrow}
\newcommand \pkt{ \bullet}
\newcommand \Off{{\bf Off}}
\newcommand \tors{\hbox{\rm   tors}}
\newcommand \Ob{\hbox{\rm   Ob}}
\newcommand \munn{\mu_{p^m}^{\otimes (n+1)}}
\newcommand \mun {\mu_{p^m}^{\otimes n }}
\newcommand \munnd{\mu_{p^m}^{\otimes ( -n)}}
\newcommand \mund {\mu_{p^m}^{\otimes (- n -1)}}
 \newcommand \Gmk{\Gamma_{(m+k),\ell}}
 
\newcommand\Gl {{ \rm  Gl }}
\newcommand\Gsp{{\rm Gsp}}
\newcommand\Lie {{ \rm Lie }} 

\newcommand \ord{\hbox{\rm   ord}}
 \newcommand \Ten[1]{\otimes_{#1}}
 \newcommand \AG {{\Bbb G}_m\times\A^1\ }
\newcommand\zM[4]{\bpm #1 &#2 \cr#3& #4\epm } 
\newcommand \Hpkt{H^{\scriptscriptstyle\bullet}}
\newcommand\C{\Bbb C}
\newcommand\D{\Bbb D}
\newcommand\Dl{{\Bbb D}_\lambda}
\newcommand\F{\Bbb F}
\newcommand\K{\Bbb K}
\newcommand\MB{\Bbb M}
\newcommand\N{\Bbb N}
\newcommand\PP{\Bbb P}
\newcommand\R{\Bbb R}
\newcommand\Q{\Bbb Q}
\newcommand\BV{\Bbb V}
\newcommand\Z{{\Bbb Z}}
\newcommand\Zz{{\Bbb Z[\frac{1}{2}]}}
\newcommand\Zit{\Z[{\bf{i}},\frac{1}{2}]}
\newcommand\Zi{\Z[{\bf{i} }]}
\newcommand\bi{\bf{i} }
\newcommand\A{\Bbb A} 
\newcommand\BH{\Bbb H}
\newcommand\Gm{{\Bbb G}_m}
\newcommand\Ga{{\Bbb G}_a}
\newcommand\Sone{{\Bbb S}^1}
\newcommand\w{{\bf w}}
%%%%%%%%%%%%%%%%%%%%%%%%%%%%%%%%%%%%%%%%%%%%%%
\newcommand\BF{\F}
\newcommand\BC{\C}
\newcommand\BP{\P}
\newcommand\Sh{{\cal S}}
\newcommand\Tp{T_{\frak P}} 
\newcommand\cO{{\cal O}}
\newcommand\HH{{\cal H}}
\newcommand\G{{\cal  G }}
\newcommand\B{{\cal  B }}
\newcommand\cA{{\cal A }}
\newcommand\cL{{\cal L }}
\newcommand\T{{\cal T }}
\newcommand\CF{{\cal F }}
\newcommand\CE{{\cal E }}
\newcommand\CL{{\cal L }}
\newcommand\CJ{{\cal J }}
\newcommand\M{{\cal M}}
\newcommand\cN{{\cal N}}
\newcommand\Ml{{\cal M_\lambda}}
\newcommand\MlZ{{\cal M}_{\lambda,\Z}}
\newcommand\MlZi{{\cal M}_{\lambda,\Z[i]} }
\newcommand\Mli{{{\cal M}_{\lambda,\Q{ (i) }}}}
\newcommand\MlC{{\cal M_{\lambda,\C}}}
\newcommand\Qi{{\Q(i)}}
\newcommand\Imi{I_{\mu,\Q(i)}}
\newcommand\tMl{{\tilde \Ml}}
\newcommand\cM{{\cal M}}
\newcommand\CC{{\cal C}}  
\newcommand\CD{{\cal D}}  
\newcommand\cD{{\cal D}} 
\newcommand\cDl{{\cal D}_\lambda} 
\newcommand\cDlZ{{\cal D}_{\lambda,\Z}}
\newcommand\CG{{\cal G}}
\newcommand\CO{{\cal O}}
\newcommand\CT{{\cal T}}
\newcommand\CU{{\cal U}}
\newcommand\GU {{ \frak U}}
\newcommand\Gu {{ \frak u}}
\newcommand\CB{{\cal B}}
\newcommand\X{{\cal X}}
\newcommand\J{{\cal J}}
\newcommand\NN {{ \cal N }}
\newcommand\cW{{\cal W}} 
\newcommand\cV{{\cal V}} 
\newcommand\cF{{\cal F}} 
\newcommand\BG{\text {BG}}
\newcommand\LZ [1]{\frak {#1}_\Z}
\newcommand\vt[1]{\vert #1\vert}
\newcommand\zT[2]{\bpm #1 & 0\cr 0 &#2\epm}
\newcommand\dT[3]{\bpm #1 & 0&0\cr 0 &#2&0\cr 0& 0&#3\epm}
\newcommand\dB[3]{\bpm #1 & *&*\cr 0 &#2&*\cr 0& 0&#3\epm}
\newcommand\Iu[1]{I_{#1}/\frak{u}I_{#1}}
\newcommand\ulm {\underline{m}   } 

%%%%%%%%%%%%%%%%%%%%%%%%%%%%%%%%%%
%\newcommand\fg {{ \frak {g}  }}
%\newcommand\fk{{ \frak {k}  }}
%\newcommand\fp{{ \frak {p}  }}
\newcommand\gK{(\fg,K_\infty)}
\newcommand\Ku{K_{\infty}}
\newcommand\KuM[1]{K_{\infty}^{#1}}
\newcommand\Kuc{K_{\infty}^{(1)}}
\newcommand\Ttc{T_1\times_\R\C}
\newcommand\GAf{G(\A_f)/K_f}
\newcommand\LGAf{ L^2(G(\Q)\bs G(\A)/K_f)}
\newcommand\Ug{{\frak U}(\fg)}
\newcommand\Zg{{\frak Z}(\fg)}
\newcommand\up{\underline{p}}
\newcommand\um{\underline{m}}
\newcommand\ug{\underline{g}}

%%%%%%%%%%%%%%%%%%%%%%%
\newcommand\Sl {{ \rm  Sl }}
\newcommand\Sg {{{ \rm  Sp}_g }}
\newcommand\Sp {{{ \rm  Sp} }}
\newcommand\GSp {{{ \rm  GSp}}}
\newcommand\SO {{{ \rm  SO}}}
\newcommand\OO{{\rm O}}
\newcommand\SU {{{ \rm  SU}}}
\newcommand\GSR {{{\GSp_g(\R)}}}
\newcommand\SR {{{\Sp_g(\R)}}}
\newcommand\piM{\pi_M^{v,\text{odd}}}

\newcommand\GrS{{\rm Gr}_{\rm Siegel}}
\newcommand\GrK{{\rm Gr}_{\rm Klingen}}
\newcommand\dN{\buildrel\bullet\over\NN}
\newcommand\dBd{\buildrel\bullet\over B_d}
\newcommand\ul {\underline   } 
\newcommand\Spec{\hbox{\rm Spec}} 
\newcommand\tM{\tilde {\cal M}}
\newcommand\cX{{\cal X}}
\newcommand\cY{{\cal Y}}
\newcommand\cZ{{\frak Z}}
\newcommand\tcX{\tilde{\cal X}}
\newcommand\tcY{\tilde{\cal Y}}
\newcommand\tcZ{\tilde{\cal Z}}
\newcommand\tG{\tilde{G}}

\newcommand\hM   {\ul {\cal M}  }
\newcommand\tMZ{\tilde {\cal M}_\Z}
 \newcommand\SHK{S^H_{K_f^H}}
  \newcommand\SHM{S^M_{K_f^M}}
 \newcommand\Mx{ M_{1,\xi}}
    \newcommand\Mxq{{  M_{1,\xi}(\Q_p)}}
 \newcommand\Meq{{ M_1(\Q_p)}}
 \newcommand\IBG{ I_{B(\Q_p)}^{G(\Q_p)} }
\newcommand\tcZq{{ \tcZ}(\Q_p)}
 \newcommand\Ad{{\rm Ad}}
 \newcommand\ad{{\rm ad}}
  \newcommand\MG{  \vert \omega_G\vert }
    \newcommand\MH{  \vert \omega_H\vert }
      \newcommand\MT{  \vert \omega_T\vert }
        \newcommand\MGT{  \vert \omega_{T\bs G}\vert }
    \newcommand\tMGT    {\vert\tilde \omega_{{\cal H}\bs G}\vert}
        \newcommand\Gssr{G_{\rm ss,reg}}
\newcommand\reg{{\rm reg}}

\noindent

\noindent 
\section{Introduction}
Harish-Chandra modules play an important role in the theory of representations 
of semi-simple Lie-groups over $\R,$  in a certain sense they are the algebraic skeleton of 
a certain class of representations of semi simple real  Lie groups.

In this note we show that certain classes of Harish-Chandra modules have in a natural way 
a structure over $\Z.$   The Lie group is replaced by a split reductive group scheme  $G/\Z,$
its  Lie algebra is denoted by $\fg_\Z.$  On the group scheme $G/\Z$ we have a Cartan involution $\Theta$
which acts by $t\mapsto t^{-1}$ on  the split  maximal  torus and  the fixed point group scheme  $\KK/\Z$            of $\Theta$
 is a flat group  scheme over $\Z.$  A Harish-Chandra module over $\Z$
is  a $\Z$-module $\cV$ which comes with an action of the Lie algebra $\fg_\Z$, an action
of the group scheme $\KK$,  and we require some compatibility conditions between these two actions.
Finally we require that $\cV$ is a union of finitely generated  $\Z$ modules $\cV_I$    which are $\KK$
invariant.

The definitions are imitating the definition of a Harish-Chandra modules over  $\R$ or over $\C.$
(See for instance \cite{borel-wallach} 0.2.5, there these modules are called $(\fg ,K)$ modules.)

 For these $(\fg_\Z,\KK)$ modules $\cV$ we   define  cohomology modules $H^\pkt(\fg_\Z,\KK,\cV)$
 and these will be finitely generated $\Z$ modules provided the module $\cV$
 satisfies suitable finiteness conditions.
  We construct some simple examples, especially we construct the $\Z$- version of the
 discrete series representations of $\Gl_2(\R)$ and compute their cohomology. 
 
In the next section 
 we discuss  the process of induction: For a parabolic subgroup $P/\Z$ and a  $(\fm, \KK^M)$-
 module $\cV$ for its reductive quotient $M/\Z$ we define the induced module
  $ \fI_P^G \cV.$ 
  
  In the final section we study  intertwining operators  between some specific  induced Harish-Chandra modules
 $\fI_P^G \D_\mu, \fI_Q^G \D_{\mu^\prime}$ where $P,Q$ are maximal parabolic subgroups  of $\Gl_N/\Z.$
  Here we have to introduce some twisting, we achieve such a twisting by extending the scalars 
  from $\Z$ to  the function field $\Q(s) $ and define $ \fI_P^G \D_{\mu}\otimes s$  over $\Q(s).$ 
  Then our intertwining operators are defined  as integrals.
We can not  expect that they are defined over $\Q(s).$ But it turns out
  (and this is certainly not surprising) that they can be written down in terms of the form
  $\Gamma(s) R(s)$ with $R(s)\in \Q(s)$  and $\Gamma(s)$ is of course the $\Gamma$-function.  If the intertwining  operator is holomorphic 
  at $s=0$ we can evaluate   at $s=0$ and it turns out that our intertwining operator,
  which is defined by the transcendental process of integration, is essentially a power
  of $\pi$ times a non zero rational number (Theorem \ref{maintheorem}).
  
  This rationality result is used  in \cite{ha-ra} Thm. 7.48,  it can be formulated without
  reference  to rational integral structures on Harish-Chandra modules, we just have to choose
  the "right" basis in certain one dimensional vector spaces.
 
 The main reason why we develop  these concepts is an intriguing question  concerning 
 the  cohomology   of these modules and its behavior under the intertwining operators. 
 It turns out that the cohomology in certain situations is a free module of rank one 
over a small ring $R$ (for instance $\Z$, $\Zit,\dots .$). Then the intertwining operator 
 divided by the appropriate  power of $\pi$ induces an isomorphism between 
 these cohomology  modules  after  we tensor them by  the quotient field of $R.$  This isomorphism depends 
 on some data, for instance some highest weights. Our question is whether this isomorphism
 is already an isomorphism over the basic ring $R$ independently of the data. 
 
 This question has been investigated in \cite{ha-gldrei} in a special case and
 reduced to an combinatorial identity, which then was proved by 
 D. Zagier (see \cite{Za}) in an appendix to \cite{ha-gldrei}.
 This gives a positive answer to the question above in this  special case.
This is the only evidence I have that the question makes sense, except that it seems to be
a very natural one.  

In this note we work with certain specific choices of Cartan involutions. Such a choice provides
the so called maximal definite group schemes $\KK/\Z$, these group schemes are flat over $\Spec(\Z)$
and they are even reductive if we invert the prime 2. But we can also choose other 
maximal definite group schemes $\KK^\prime$ which are reductive at the prime 2 and perhaps non reductive 
at some other places. This suggests that we should  speak of sheaves of Harish-Chandra modules over $\Spec(\Z).$

  \section{  Harish-Chandra  modules over $\Z$ }\label{gkrat}
  \subsection{The general setup}\label{general}
 For any affine group scheme  $H/\Spec(\Z)$ we denote by $A(H)$  its algebra of regular functions.
 The affine algebra of the multiplicative group scheme  $\Gm$ is $A(\Gm)=\Z[x, x^{-1}] ,$ i.e. we choose the generator $\gamma_1$ of the character module
 $X^*(\Gm) ,$ it is given by the identity. Let $\Ga$ be the one dimensional additive group scheme the
 $A(\Ga)=\Spec(\Z[X]).$

  Let $G/\Spec(\Z) $ be a reductive connected group scheme, we assume that the derived group
  $G^{(1)}/\Spec(\Z)$ is a simply connected Chevalley scheme, the central torus  $C/\GZ$ should be split. 
  Let ${\frak g}_\Z, {\frak g}^{(1)}_\Z$ be the Lie algebras of $G/\GZ, \Ge/\GZ$ respectively, let $\fc_\Z$ be the Lie algebra of $C$. We have the split 
  maximal torus $T/\Spec(\Z),$  let
  $T^{(1)}/\GZ= T\cap G^{(1)}.$  We choose a Borel
  subgroup $B/\Spec(\Z) \supset T/\Spec(\Z).$ As usual we denote the character module 
  $\Hom(T,\Gm)$ by $X^*(T),$ we have the direct sum decomposition
  \begin{align}
X_\Q^*(T)= X^*(T)\otimes \Q = X_\Q^*(T^{(1)}) \oplus X_\Q^*(C),
\end{align}
we will always write $\gamma= \gamma^{(1)}+ \delta,$ this is the decomposition  of a character 
$\gamma\in X_\Q^*(T)$ into its semi simple and its abelian part.

 Let  $\Delta (\hbox{ resp. } \Delta^+\subset X^*(T)$)
  be the set of  roots (resp. positive roots), let $\pi =\{\alpha_1,\alpha_2,\dots, \alpha_r\} \subset \Delta$ be the set of simple positive roots. Let $\gamma_1,\gamma_2,\dots, \gamma_r\in X^*(T^{(1)})$ be the dominant fundamental weights, we extend them to elements in $X_\Q^*(T)$ by putting  the abelian part equal to zero.
The element $\rho\in X_\Q(T)$ is the half sum of positive roots.

  For any root $\alpha$ we have the root  subgroup scheme
  $U_\alpha/\GZ,$ we assume that for all simple roots  we have fixed an isomorphism
  
  $$\tau_\alpha : G_a/\GZ\iso U_\alpha/\GZ,$$
  
  i.e. we have selected a generator $e_\alpha$ of the abelian group $U_\alpha(\Z)\iso \Z.$
  
  \bigskip
  
  From our simple root $\alpha$ we also get a subgroup scheme $H_\alpha\subset G^{(1)}/\GZ$
  which is "generated" by $U_\alpha,U_{-\alpha}$  and which is isomorphic to $\Sl_2/\GZ.$ 
  It has a maximal torus $T_\alpha/\GZ\subset  T^{(1)}/\GZ$ which is the intersection  of the kernels
  of the fundamental weights $\gamma_\beta$ where $\beta\neq \alpha.$ 
  The choice of $\tau_\alpha$ is  the same as the choice of an isomorphism
 $$
 \tilde \tau_\alpha : \Sl_2/\GZ   \to H_\alpha
 $$
  which sends the diagonal torus to $T_\alpha$ and  on the $\Z$-valued points
 
 $$    \bpm  1 & 1 \cr 0 & 1\epm \to e_\alpha$$
 
 The derivative of $\tau_\ag$ defines a generator $E_\ag\in \Lie(U_\ag)$  Finally we define the coroot  $\alpha^\vee:  \Gm \iso T_\alpha $ which  is defined by the rule $<\alpha^\vee, \alpha>=2.$
 
Let  $\Theta$ be the unique automorphism of $\Ge/\GZ$ which induces $t\mapsto t^{-1}$ on $T^{(1)}$
and restricted to $H_\alpha$  and composed with  $\tilde\tau_\alpha^{-1}$ is the inner automorphism
given by the element 

$$ s_\alpha=\bpm 0 & 1 \cr -1 & 0 \epm.$$
 We call the pair $(G^{(1)},\Theta)$ an Arakelow Chevalley scheme.
 The automorphism restricted to $\Ge(\R)$ is of course a Cartan involution and the fixed point set
 $\Ge(\R)^\Theta= K_\infty^{(1)}$ is a maximal compact subgroup. Here we use this automorphism to
 give  the structure of a group scheme over $\Spec(\Z)$  to $K_\infty^{(1)}$. 
  To be more 
 precise: The group scheme of fixed points $\Ke/\GZ=({\Ge})^\Theta/\Spec(\Z)$ is a flat group scheme 
 over $\Spec(\Z)$, it is smooth and connected over $\Spec(\Zz).$  We    call $\Ke$ a  {\it  maximal  definite connected} subgroup scheme  of $\Ge/\Z.$
  We denote by $\fk_{\Zz} $  its Lie algebra over 
 $\Zz.$  
We put $\fk_\Z=\fg_\Z \cap \fk_{\Zz}. $ Here $\fk_\Z$ is a maximal sub algebra for which
the restriction of the Killing form is negative definite.    This   justifies the terminology.

If we have an extension of  the Cartan involution to $G/\Z$ then we can also look on the fixed point scheme
 $G^\Theta/\Z$ and define $\KK/\Z= G^\Theta .$ We are mostly interested in cases
 where this extension induces $ t\mapsto t^{-1}$ on $C/\Z$, then $\Ke $ is the connected component
 of the identity of $\KK/\Z.$ In general  we denote by $\KK $ a group scheme lying between $\Ke $ and $G^\Theta.$
 Then $\KK/\Ke$ is a finite  constant  group scheme which is isomorphic to $(\Z/2\Z)^s.$ 
 We also consider  larger  subschemes  
 of the form $\tKK=\Ke\cdot C^\prime,$ where $C^\prime $ is any subtorus of the split maximal torus $C.$ We call them {\it essentially maximal definite} subgroup schemes. They are also smooth over $\Zz$ and   the Lie algebra  is denoted by $ \tfk_{\Zz}.$  Again we define $\tfk_{\Z }=\tfk_{\Zz}\cap \fg_\Z.$
 
 For any ring $\Z\subset R$
we define the notion of a Harish-Chandra module  over $R$, or  equivalently a $(\LGZ, \KK)$- module 
over $R.$

\bigskip

1) This will  by a  projective   $R-$ module $\cV$ which  is the union of finitely generated projective  submodules   ${\cV_I},{I\in {\cal I}}$
such that $\cV/\cV_I$  is torsion free. 
We have an action of $\KK$ on $\cV$  which respects the $\cV_I.$

\bigskip
2) If $L$ is the quotient field of $R$ then every irreducible finite dimensional representation $\vartheta$
of $\KK\times L$ occurs with finite multiplicity in this module and we have the isotypical decomposition
$$  \cV\otimes L  =\bigoplus \cV(\vartheta) $$
where $\cV(\vartheta)$ is the $\vartheta$ isotypical component.

\bigskip
3) We have a Lie-algebra action of $\LGZ \otimes R$  on $\cV.$ 

\bigskip

4) The group scheme $\KK$ acts by the adjoint action on $\LGZ$  and the $R$ -module
homomorphism

 $$ ( \LGZ\otimes R) \otimes \cV \to \cV,$$
which is given by 3), is $\KK$ invariant.

\bigskip
5) The restriction of the 
Lie-algebra action of $\fg_\Z$ to the Lie-algebra $\fk_\Z =\Lie(\KK)$ is the differential
of the action of $\KK.$

\bigskip
Finally we formulate a finiteness condition

6) For any $I\in {\cal I} $ we find an $I_1\in {\cal I} $ such that $\cV_I\subset \cV_{I_1} $  such that the 
Lie algebra action of $\fg_\Z$ on $\cV$ induces an $R$ -bilinear map
\begin{align}
\fg_\Z \times \cV_I \to  \cV_{I_1}
\end{align}
 
We say that the  $(\fg_\Z,\KK)-$ module has a central character if   
 the Lie algebra of the center $\frak{c}_\Z=\Lie(C)$  acts by a linear map $z_V : \frak{c}_\Z\to  R.$

\bigskip
\subsection{Some comments}\label{comments}
This is almost the same as the usual definition of a Harish-Chandra  module except
that the field of scalars $\C$ has been replaced by $R$ and the 
action of the maximal compact group $K_\infty$  is replaced by the 
action of the group scheme $\KK.$

We want to remind the reader what it means that   the group scheme $\KK/\Spec(\Z)$
acts upon $\cV$ and $\cV_I .$ We recall that by definition $\KK/\GZ $ is a functor from the 
category of affine schemes $ Y\to \Spec(R)$ to the category  of groups. This means that for
any commutative ring $  R_1$  containing  $R$ we get an abstract group of $R_1-$ valued point
$G(R_1)$ which depends functorially on $R_1.$ Then the action of $\KK/\GZ$ on
the $R$ module $\cV$ provides for any $R_1$ an action of $\KK(R_1)$ on the $R_1$ module
$\cV\otimes_RR_1.$ We require that for all our finitely generated submodules
the module $\cV_I \otimes R_1$ is invariant under $\KK(R_1).$

In all examples which will be discussed below we take for $\cal I$  the set
of finite sets of isomorphism  classes  of irreducible representations of the group scheme $\KK.$
If $I=\{\vartheta_1,\dots, \vartheta_r\} $ then 
\begin{align}
\cV_I=  \cV \cap \oplus_{\nu=1}^r \cV(\vartheta_\nu)
\end{align}
In this case the requirement 6) is superfluous.

We call $\cV$ irreducible if $\cV\otimes L$ does not contain a proper $(\fg_\Z, \KK)$ 
submodule, we call it absolutely irreducible if    $\cV\otimes L_1$ stays 
  irreducible for any finite extension $L_1/L$.
  
  We saw already that we have some flexibility in the choice of $\KK.$ If we replace $\KK$ by the connected
  component of the  identity $\Ke$ then we can restrict the $(\fg_\Z,\KK)$ module to $(\fg_\Z, \Ke).$ It may 
  happen that the restriction of an irreducible module is not irreducible anymore. 

\subsection{ Motivation for this concept}\label{motivation}
This may look a little bit artificial. Let us   choose a dominant weight $\lambda\in X^*(T)$ and  construct  a highest weight module $\M_{\lambda,\Z}.$ This highest weight module
has a central character $\zeta_{\lambda}\in X^*(C).$   We are looking for  absolutely irreducible  Harish-Chandra modules
$\cV$ (over $\Z$ or a slightly larger ring) 
  having the central character   $z_V=- d\zeta_{\lambda},$  and  which have non trivial
   cohomology with coefficients in $M_{\lambda,\Z}.$
The cohomology is defined as the cohomology of the complex
\begin{align*}
\Hom_\KK( \Lambda^\pkt(\fg_\Z/\fk_\Z), \cV\otimes \M_{\lambda,\Z})
\end{align*}
where the definition of the complex is exactly the same as in the traditional situation (See for instance \cite{harder-book} Chap. 3, section 4).  Hence we define 
\begin{align}
H^\pkt (\fg_\Z, \KK, \cV\otimes \M_{\lambda,\Z})=H^\pkt(\Hom_\KK( \Lambda^\pkt(\fg_\Z/\fk_\Z), \cV\otimes \M_{\lambda,\Z}))
\end{align}

It easy to see that only the semi-simple component is relevant for the computation
of the cohomology, we have 
\begin{align}\label{CohGK}
H^\pkt (\fg_\Z, \KK, \cV\otimes \M_{\lambda,\Z}) = H^\pkt(\fg_\Z^{(1)},\Ke,\cV\otimes \M_{\lambda,\Z})^{\KK/\Ke} \otimes \Lambda^\pkt(\frak {c}_\Z)
\end{align}
We will see that that factor $\Lambda^\pkt(\frak{c}_\Z) $ is rather uninteresting. If we replace $\KK$ by a larger group
$\tKK=\Ke\cdot C^\prime$ then we define more generally 
\begin{align}
H^\pkt (\fg_{\Z}, \tKK, \cV\otimes \M_{\lambda,\Z})=H^\pkt(\Hom_{\Ke}( \Lambda^\pkt(\fg_\Z/\tfk_\Z ), \cV\otimes \M_{\lambda,\Z}))
\end{align}
(Observe the subscript at the $\Hom$  is $\Ke$ and not $\tKK$ as one might expect.) If we choose $C^\prime=C$
and  replace $\KK$ in (\ref{CohGK}) by $\tKK$ then the factor $\Lambda^\pkt(\frak {c}_\Z)$ is replaced by $\Lambda^0(\frak{c}_\Z)=\Z.$

\medskip
 We will be mainly concerned with the group scheme $G=\Gl_n/\Spec(\Z),$  the involution $\Theta$ will be the usual
 involution $g\mapsto ^t  g^{-1}.$     Our first aim will be to construct
 for a given highest weight module $\M_{\lambda,\Z} $  a  very specific absolutely irreducible $(\fg_\Z,\KK) $ module 
  $\D_\lambda$ 
 which has non trivial cohomology. More precisely:  The lowest degree were we find non trivial cohomology is
 $b_n=[\frac{n^2}{4}] $ (See \cite{ha-ra}, 3.1.5) and 
 
\begin{align}
H^{b_n}(   \GKe, \D_\lambda\otimes \M_{\lambda,\Z})\iso 
 \bc \Zz \omega_\lambda^+\oplus \Zz \omega_\lambda^- & n \text{ even }\cr
  \Zz \omega_\lambda & n \text { odd }
  \ec \otimes \Lambda^\pkt(\frak{c}_\Z) 
\end{align} 
 We still have the action of $\KK/\Ke=\Z/2\Z(=\pi_0(\Gl_n(\R))$  on the cohomology. This action is non trivial if
 $n$ is even  and the cohomology decomposes in a $+$ and  a $-$ eigenspace. (See \ref{sec:HGK}).
This will be relevant for the definition of the periods in \cite{ha-ra}.  

\medskip

If we take the tensor product $\D_{\lambda}\otimes \C$ then we get the usual Harish -Chandra modules over 
$\C$
which are   denoted by $\D_{\lambda}$ in  \cite{ha-ra}, 3.1 4.  We will call these  modules over
$\C$ the transcendental Harish-Chandra modules.  These special transcendental modules will be 
the only tempered modules which have cohomology and they contribute to the cuspidal cohomology (See \cite{ha-ra}, Sec. 5).

\section{First  examples}
\subsection{The case of the torus $\Gm$}
For the multiplicative group scheme $\Gm/\Z$ we have $\Lie(\Gm)_\Z=\Z H.$   We may choose for  the group scheme $\KK$ simply the subscheme   $\KK=\mu_2$ of second roots of unity. Then we can construct
a $ (\Z H, \KK)$ module $Z[\gamma\otimes m]$ for any pair $(\gamma, m)$ where $\gamma\in X^*(\Gm) $ and
where $m$ is an integer modulo two.  If $\gamma= x^n$ then the generator $H$  of $\Lie(\Gm)$  acts by multiplication by $n$
and the action of $\KK(\Z)$ is given by the sign character  $-1\mapsto (-1)^m.$ Therefore 
it is clear that these modules   $\Z[\gamma\otimes m]$ are the absolutely irreducible  $(\Lie(\Gm)_\Z,\KK)$
modules. The pairs $(\gamma,m)$ are called the characters of  Hecke type $-\gamma$, if $m=0$ then these are 
the rational characters. 
We can do essentially the same for any split torus $C,$  for any pair $\gamma\in X^*(C)$
and any $\epsilon: \KK =C(Z) \to \{\pm\}$ we can construct the $(\Lie(C)_\Z, \KK)$
module $\Z[\gamma\otimes \epsilon].$

\subsection{ The special case $\Gl_2/\Z$}\label{Glzwei}
We consider the special case $G=\Gl_2/\GZ$ with $\tilde\tau_\alpha=\Id.$ The group $\Gl_2(\R)$ 
has its discrete series  representations and the resulting   $({\frak g}_\R,\Ku)$ -modules. We want to show 
that these discrete series representations  are base extensions  of Harish-Chandra  modules over $\Z.$  

Inside 
$G$ we have the subgroup scheme 
$$ \tKK =  \{ \bpm a &  b \cr -b  & a \epm \} \subset \Gl_2.$$
The affine algebra of $\tKK$ is $A(\tKK)=\Z[a,b, 1/(a^2+b^2)].$
Let $\cO =  \Zi $ where $\bi^2=-1.$ We   define the flat group scheme $R_{\cO/\Z}(\Gm)$, its $R$ valued points
are $R_{\cO/\Z}(\Gm)(R)=(\cO\otimes_\Z R)^\times.$ We choose  an isomorphism
$$
  j: \tKK \iso  R_{\cO/\Z}(\Gm) 
   $$ 
which is defined by the rule

 $$
 j  : I= \bpm 0 & 1 \cr -1  & 0 \epm  \mapsto \bi  ,
 $$

 The group scheme $\tKK/\Z$ is not smooth, but  the embedding $\Zi\otimes \Zi \to \Zi\oplus \Zi $
 induces an embedding 
   
 \begin{align}
\tKK\times \Spec(\Zi)  \into  \Gm \times \Gm
\end{align}
 This embedding yields an inclusion  of affine algebras 
 $$
 \Zi[x,x^{-1}]\otimes \Zi[y,y^{-1}] \into A(\tKK)\otimes \Zi.
 $$ 
 Here  is $y= \bar{x}$  is the complex conjugate of $x.$ Then we get
 \begin{align}
a =\frac{1}{2} (x+y)  , b= \frac{1}{2\bi} (x-y)
\end{align}
This inclusion becomes an isomorphism if we invert $2.$
 We observe that we have the obvious inclusion $i: \Gm\into \tKK$  and we have the restriction 
 of the determinant $\det :\tKK \to \Gm.$  The kernel of $\det$ is the group scheme 
 \begin{align}
 \KK^{(1)} = \{ \bpm a &  b \cr -b  & a \epm \subset \Gl_2 \vert\;  a^2+b^2 =1\}
\end{align} 
 The character module $X^*(\KK^{(1)}\times \Zi) = \Z e, $ where 
\begin{align}\label{orient}
e:  \{ \bpm a &  b \cr -b  & a \epm \}   \mapsto (a+b\bi).
\end{align} 
   The matrix   
 \begin{align}\label{conj}
 c_2 = \bpm 1& 1\cr -\bf{i}&\bf{i}\epm \in \Gl_2(\Zit)
\end{align}
conjugates the standard diagonal torus $T\times \Zit$ into $\KK^{(1)}\cdot \Gm \times \Zit.$

\bigskip 
We  choose a weight $\lambda=l\g_1+d\delta$ where $ l\equiv 2d\mod 2.$ 
 We consider the space of regular functions on $G$ which satisfy
 \begin{align}
A_\lambda(B\backslash G)  =  \{f\in  A( G) \vert  f(bg)  =  \lambda(b) f(g)\} 
\end{align}
On this space of sections  we have the action of $G$ by right translations. The following is rather obvious and well
known
\begin{prop}\label{AG} The module $A_\lambda(B\backslash G)$ of regular functions 
  is trivial if $l>0. $ If $l\leq 0$ it realizes the module $\M_{\lambda^-,\Z}$  of highest weight
$\lambda^-=-l\gamma_1+ d\delta.$
\end{prop}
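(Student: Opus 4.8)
The plan is to work over $\Q$ first (where this is the classical Borel–Weil computation for $\Gl_2$) and then descend to $\Z$. First I would set up the explicit coordinates: write $G=\Gl_2$, $B$ the upper triangular Borel, and identify $A(B\backslash G)_\lambda$ as the subspace of $f\in A(G)$ transforming on the left by $\lambda=l\gamma_1+d\delta$. Since $\gamma_1$ is the fundamental weight sending $\mathrm{diag}(t_1,t_2)\mapsto t_1$ and $\delta$ is the determinant character, left-equivariance under $B$ forces $f$ to be determined by its values on the ``big cell'' and, combined with regularity on all of $G$, forces $f$ to be a polynomial in the matrix entries of the bottom row $(c,d)$, homogeneous of degree $-l$ in those entries, times a power of the determinant. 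This immediately gives the dichotomy: if $l>0$ there are no nonzero such polynomials (negative degree), so $A_\lambda(B\backslash G)=0$; if $l\le 0$ the space is spanned by $c^i d^{-l-i}\cdot(\det)^{d'}$ for a suitable $d'$ fixed by matching the $\delta$-part, and this is a free $\Z$-module of rank $-l+1$.

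Next I would identify the $G$-action by right translation on this rank $-l+1$ module with the highest weight module $\M_{\lambda^-,\Z}$ of highest weight $\lambda^-=-l\gamma_1+d\delta$. The point is that the bottom row $(c,d)$ transforms under right translation exactly as the standard two-dimensional representation of $\Gl_2$ (twisted by a determinant power), so $\mathrm{Sym}^{-l}$ of it, suitably twisted, is a highest weight module; one checks the highest weight vector is the monomial $d^{-l}(\det)^{d'}$ and computes its weight under $T$ to be $-l\gamma_1+d\delta$ as claimed, using the congruence $l\equiv 2d\bmod 2$ to see $d'$ is an integer. Over $\Z$ one notes that these monomials form an actual $\Z$-basis of the space of regular functions (not just after inverting anything), because $A(G)=\Z[a,b,c,d,1/(ad-bc)]$ and the equivariance condition cuts out precisely the free submodule they span; hence the identification is an isomorphism of $\Z$-forms, not merely after $\otimes\Q$.

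The main obstacle I expect is bookkeeping rather than anything deep: keeping the normalization of $\lambda=l\gamma_1+d\delta$ versus $\lambda^-=-l\gamma_1+d\delta$ straight (why the $\delta$-part is unchanged while the $\gamma_1$-part flips sign), and checking that the naive $\Z$-lattice spanned by monomials really is $B$-equivariant \emph{integrally} and really is all of $A_\lambda(B\backslash G)$ over $\Z$ — i.e.\ that no denominators sneak in. A clean way to handle the last point is to use the Iwasawa/Bruhat decomposition over an arbitrary ring to restrict $f$ to the unipotent radical of the opposite Borel, which is $\mathbb{A}^1$, and observe that regularity on $G$ together with the transformation law pins $f$ down to a polynomial of bounded degree with integer coefficients; this avoids any appeal to flatness or to inverting $2$. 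The congruence $l\equiv 2d\bmod 2$ is exactly what is needed for $\lambda$ (hence $\lambda^-$) to be in $X^*(T)$ rather than merely $X^*_\Q(T)$, so it should be invoked at the step where one writes down the determinant twist.
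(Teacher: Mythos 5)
Your proposal is correct and is essentially the argument the paper has in mind: the paper gives no proof at all, declaring the statement ``rather obvious and well known'' and merely recording the Borel--Weil identification $A_\lambda(B\backslash G)=H^0(B\backslash G,\cL_\lambda)$, so your explicit computation (left $U$-invariance forces $f$ to be a polynomial in the bottom-row entries times a determinant power, the torus equivariance fixes the determinant exponent $d'=\tfrac{l}{2}+d$ and the homogeneity degree $-l$, and the parity condition $l\equiv 2d \bmod 2$ makes $d'$ an integer) is exactly the standard way to substantiate it, and your care about the integral structure is appropriate.

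One small slip: with the action by right translation $(R_gf)(x)=f(xg)$, the bottom row $(c,d)$ of $x$ transforms as the standard representation, and the monomial fixed by the upper-triangular unipotent radical is $c^{-l}(\det)^{d'}$, not $d^{-l}(\det)^{d'}$; the latter is the \emph{lowest} weight vector, of weight $\lambda=l\gamma_1+d\delta$ (anti-dominant since $l\le 0$), while $c^{-l}(\det)^{d'}$ has weight $-l\gamma_1+d\delta=\lambda^-$. This does not affect the conclusion --- the module is still $\Sym^{-l}(\mathrm{std})\otimes\det^{d'}$ with highest weight $\lambda^-$ --- but the vector you point at is the wrong extreme of the weight string.
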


We can also say that $\lambda$ defines a line bundle $\cL_{\lambda}$ and 
\begin{align}\label{Borel-Weyl}
A_\lambda(B\backslash G)  = H^0(B\backslash G, \cL_\lambda).
\end{align}

The algebra of regular function  is   embedded into the  larger  algebra   $A(G)_e,$ 
these are the function which are regular at the identity element, it is the localization at $e.$
Again we define 
 \begin{align}
A_\lambda(B\backslash G)_e  =  \{f \in  A( G)_e \vert  f(bg)  =  \lambda (b) f(g)\}
\end{align}
 On this module we do not have an action
of $G,$ but it is clear that we still have  an action of $\fg_\Z.$ 

 We consider the morphism of schemes $m: B \times \tKK \to G $  given by the multiplication.   The intersection
  $B\cap \tKK= C =\Gm$ is embedded into the product   $t\mapsto (t,t^{-1}).$ The fiber of
the morphism $m$  are torsors under the action of $C.$ Then $m$ induces 
 a homomorphism of  affine algebras
  \begin{align}
A(G) \into ( A(B)\otimes A(\tKK)  )^C  \into A(G)_e
\end{align}
Our character $\lambda$ defines the rank one module $\Z[\lambda].$  Let $\lambda_C$ be the restriction of $\lambda$ to the center $C.$ Then the above embedding defines an inclusion
\begin{align}
\Z[\lambda]\otimes A_{\lambda_C}(\tKK) \into A(G)_e[\lambda]
\end{align}
The left hand  hand side is a $\tKK-$ module, this $\tKK$ module is invariant under the
action of the Lie-algebra $\fg_\Z. $  This allows us to define the induced module 
\begin{align}
\fI_B^G \Z[\lambda] = \Z[\lambda]\otimes A_{\lambda_C}(\tKK) 
\end{align}

For $\nu\equiv l \mod 2$ we define the elements 
\begin{align}
\Phi_{d,\nu} = \frac {(a+b\bi)^\nu}{(a^2+b^2)^{\frac{\nu-2d}{2}}} \in A_{\lambda_C}(\tKK)\otimes \Zi .
\end{align}
We get an inclusion
\begin{align}
 A_{\lambda_C}(\tKK)\otimes \Zi \supset \bigoplus \Zi \Phi_{d,\nu} 
\end{align}
and this inclusion becomes an isomorphism if we invert 2. Then we get a decomposition 
into eigenspaces under the action of $\KK.$ The $\Phi_{d,\nu}  $ are  characters. The complex conjugation ${\bf c}$ (the non trivial element in
$\Gal(\Q(\bi)/\Q)$) acts on the modules above and ${\bf c}(\Phi_{d,\nu} )=\Phi_{d,-\nu} .$

We define a submodule 
$$
 A^\prime_{\lambda_C}(\tKK) =(\bigoplus_{\nu\equiv l\mod 2} \Zi \Phi_{d,\nu}) \cap A_{\lambda_C}(\tKK) 
$$ if we invert 2 it becomes isomorphic to $ A_{\lambda_C}(\tKK).$

The Lie algebra $ \fg^{(1)}_\Z$ is a direct sum $\fg^{(1)}_\Z = \Z H \oplus \Z E_+\oplus \Z E_- ,$ where 
\begin{align} 
H = \begin{pmatrix} 1& 0\cr 0 &-1 \end{pmatrix} , E_+ = \begin{pmatrix} 0& 1\cr 0 &0 \end{pmatrix} ,
E_- = \begin{pmatrix} 0& 0\cr 1& 0 \end{pmatrix} .
\end{align}
We introduce some more notation
\begin{align}
\begin{matrix}
V= E_+ +E_- ,  & Y=  E_+-E_-\cr P_+ = H + i\otimes V,  &P_- = H -i\otimes V 
\end{matrix}
\end{align}
where the elements in the first row are in $  \fg^{(1)}_\Z$ the elements in the second row 
are in $\fg^{(1)}_{\Zi}.$
Under the adjoint action  of $\tKK$ the elements  $P_+, P_-$ are eigenvectors. We have 
\begin{align}
\Ad(k) P_+ = \Phi_{0,2}(k) P_+,  \Ad(k) P_- = \Phi_{0,-2}(k) P_-
\end{align}

  Some elementary computations yield 
 (the reader may find a more detailed exposition in  \cite{harder-book} in the  file [sl2neu.pdf]):
 \begin{align}\label{diff}
   Y \Phi_{d,\nu} = i\nu \Phi_{d,\nu} ,\; 
P_+ \Phi_{d,\nu} = (l + \nu ) \Phi_{d,\nu+2},\; 
P_- \Phi_{d,\nu} = (l -\nu ) \Phi_{d,\nu-2} 
\end{align}
We look at the   generators of $\fg_\Z/\fk_\Z$
 $$
H={1\over 2} (P_++P_-),  V=\frac {1}{ 2i} (P_+-P_-)
$$
and because of the parity conditions it is clear  
 $$ A^\prime_{\lambda_C}(\tKK)  $$
  is a $(\fg_\Z, \tKK)$- module and hence we get that
 \begin{align}
\fI_B^G \Z[\lambda]  \otimes \Zz \text{ is a  } (\fg_\Z, \tKK) \text{ module} 
\end{align}

 \bigskip
 Now it becomes clear that  $\fI_B^G \Z[\lambda]  \otimes \Zit $ is never irreducible. We have two cases.
 
  Let us first assume  $l\leq 0,$
 then we get from our formulas  (\ref{diff}) that $P_+ \Phi_{d,-l}=0, P_- \Phi_{d,l}=0$ and we find 
 a non trivial invariant submodule
      
 \begin{align}
\bigoplus_{l\leq \nu\leq -l:\nu\equiv l\mod 2} \Zit \Phi_{d,\nu}  
\end{align}
  and if we  look a little bit more closely then we see that this is the module $\M_{\lambda^-,\Zit}.$ The quotient by this submodule decomposes into a direct sum, i.e. we get an exact sequence
   \begin{align}\label{exact1}
0 \to \M_{\lambda^-, \Zit }    \to \fI_B^G \Z[\lambda]  \otimes \Zit  \to \cD_\lambda^+ \oplus \cD_\lambda^-\to 0
\end{align}
where
 
\begin{align}\label{discseries}
\cD_\lambda^+ =\bigoplus_{\nu\geq -l+2 ,\nu\equiv l\mod 2}\Zit \;\Phi_{d,\nu}\; ; \;
 \cD_\lambda^-=\bigoplus_{\nu\leq  l -2,\nu\equiv l\mod 2}\;\Zit \Phi_{d,\nu}
\end{align}
is a decomposition into two invariant submodules.

\bigskip 
We look at the second case where $l\geq 0.$ In this case  look at the induced module
$$\fI_B^G\Z[\lambda  +2\rho],$$
here $2\rho $ is the sum of the positive roots, in this case we have of course $2\rho=\alpha.$
In our formula (\ref{diff}) we have to replace $l$ by $l+2.$ We 
 have $P_-\Phi_{d,l+2}=0$ and $P_+\Phi_{d,-l-2}=0$
and hence we see that the two modules in (\ref{discseries}) are invariant submodules and we get an exact
sequence 
\begin{align}\label{exact2}
0 \to ( \cD_\lambda^+ \oplus \cD_\lambda^- )  \otimes \Zit  \to \fI_B^G \Z[\lambda+2\rho]  \otimes \Zit  \to \M_{\lambda }  \otimes \Zit   \to 0
\end{align}
For  any  $\lambda$
the modules $\cDl^{\pm}\otimes \C $ are the familiar  discrete series modules.   If we  consider the two weights   $\lambda=l\gamma_1+d\delta, \lambda^-=-l\gamma_1+d\delta$ and then  two discrete series  $\cDl^{\pm} , \cD_{\lambda^-}^{\pm} $ are not isomorphic
but if we take the tensor product with the rationals  then we find   isomorphisms

\begin{align}\label{isoint}
\Psi^{\pm}_{d,l}:     \cDl^{\pm}\otimes \Q  \to  \cD_{\lambda^-}^{\pm} \otimes \Q 
\end{align}
which is uniquely defined by the condition  $ \Psi^{\pm}_{d,l}(\Phi_{d,\pm(l+2)}) =\Phi_{d,\pm(l+2)}.$

\bigskip

In our notation the discrete series Harish-Chandra modules for $\Gl_2$ are  parametrized by
a pair $(\lambda,\text{sign})$ where $\lambda $ is a highest weight $\lambda=l\gamma_1+d\delta,l\geq 0.$
We have seen that  with these notation the exact sequences above tell us that
\begin{align}
\Ext^1(\M_{\lambda,\Zi},   \cDl^{\pm})\not=0.
\end{align}
 
 If we restrict the action of $\KK$ on $\cDl^{+} $ to $\KK^{(1) } $ then we get a decomposition into
 $\KK^{(1) }- $  types
 \begin{align}
\cDl^+  =\bigoplus_{\nu\geq l+2 \vert \nu\equiv l\mod 2} \Zi[\nu]
\end{align}
where $\KK^{(1) } $ acts by $ \nu e $ on $\Zi[\nu].$ The character $(l+2) e $ is called 
the {\it minimal $\KK^{(1) } $ type} in $\cDl^{+}.$ The character $-(l+2)e$ is also called
the  minimal $\KK^{(1) } $ type in $\cDl^{-}.$
  
In the following we will work with $\lambda=l\gamma_1+d\delta$ and $l\geq 0.$ We consider the 
module $\MlZi.$  Let us assume that we realized $\MlZ$ as the module of homogenous  
polynomials of degree $l$ 
in two variables $U,V. $ (This is actually the module $H^0(B\backslash G, \cL_{\lambda^-}).$) We consider the 
action of $\Ke\times \Zi $ on it and we have the decomposition into eigenspaces
\begin{align}
\Zit (U-i V)^l \oplus \dots   \oplus \Zit (U+iV)^l .
\end{align}
We are only interested in the highest and lowest weight vectors.  We abbreviate  $ (U-i V)^l =e_{-l},
(U+iV)^l =e_l.$ We also put $\cDl= \cD_\lambda^+ \oplus \cD_\lambda^-  .$  The relative Lie-algebra cohomology  with coefficients in $\cDl\otimes \MlZi $ is the cohomology  of the complex
\begin{align}
    \Hom_{\Ke}(\Lambda^\pkt(\fg_\Z/\tfk_\Z)\otimes \Zi, \cDl\otimes \MlZi)  
\end{align}
Here we observe that $\Lambda^0((\fg_\Z/\tfk_\Z)\otimes \Zi) = \Lambda^2((\fg_\Z/\tfk_\Z)\otimes \Zi)=\Zi$
where we choose $P_+\wedge P_-$ as generator.  Since $ \cDl\otimes \MlZi$ does not contain the
trivial $\Ke$ module this complex is  zero in degree 0 and 2.
In degree one we have
\begin{align}\label{PP}
\Lambda^1(\fg_\Z/\tfk_\Z)\otimes \Zi )=\fg_\Z/\tfk_\Z \otimes \Zi=  \Zi P_+\oplus \Zi P_-
\end{align}
We denote by $P_+^\vee, P_-^\vee\in \Hom((\fg_\Z/\tfk_\Z)\otimes \Zi,\Zi)$ the dual basis.  
 
\begin{prop} 
 $$H^1(\fg_\Z,\tKK , \cDl \otimes \MlZi ) =\Zi P_+^\vee\otimes \Phi_{d,l+2}\otimes e_{-l}\bigoplus \Zi P_-^\vee\otimes \Phi_{d,-l-2}\otimes e_{l}$$
\end{prop}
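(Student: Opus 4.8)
The strategy is to compute the cohomology of the one-term complex
\begin{align*}
\Hom_{\Ke}\bigl(\Lambda^1(\fg_\Z/\tfk_\Z)\otimes\Zi,\ \cDl\otimes\MlZi\bigr)
\end{align*}
directly, using the eigenspace decompositions already established. Since the complex vanishes in degrees $0$ and $2$, every element of degree one is automatically both a cocycle and non-cobounding, so $H^1$ equals the whole degree-one term, and the task reduces to identifying the $\Ke$-invariants of $\Hom\bigl((\fg_\Z/\tfk_\Z)\otimes\Zi,\ \cDl\otimes\MlZi\bigr)$ as an explicit free $\Zi$-module. First I would write, using (\ref{PP}) and the dual basis $P_+^\vee,P_-^\vee$, that the degree-one term is $P_+^\vee\otimes(\cDl\otimes\MlZi)\oplus P_-^\vee\otimes(\cDl\otimes\MlZi)$, and then impose $\Ke$-invariance.

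Next I would run through the $\Ke$-weights. From $\Ad(k)P_+=\Phi_{0,2}(k)P_+$ and $\Ad(k)P_-=\Phi_{0,-2}(k)P_-$, the dual vectors $P_+^\vee,P_-^\vee$ carry $\Ke$-weights $-2,+2$ respectively (in the $e$-grading, weights $-1$ and $+1$ after the normalization where $\Phi_{0,2}$ has $e$-weight $2$). On the coefficient side, $\cDl^+=\bigoplus_{\nu\ge l+2}\Zi\,\Phi_{d,\nu}$ with $\Ke$ acting by weight $\nu$, similarly $\cDl^-$ with weights $\nu\le -l-2$, and $\MlZi$ has $\Ke$-weights running over $-l,-l+2,\dots,l$ on the vectors $e_{-l},\dots,e_l$. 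An element $P_\pm^\vee\otimes\Phi_{d,\nu}\otimes e_k$ is $\Ke$-invariant exactly when the total weight is zero. For the $P_+^\vee$ summand this forces $\nu+k=2$ with $\nu\ge l+2$ and $-l\le k\le l$, whose only solution is $\nu=l+2$, $k=-l$; for the $P_-^\vee$ summand one gets $\nu+k=-2$ with $\nu\le -l-2$, forcing $\nu=-l-2$, $k=l$. (One must also check that the $\cDl^-$ part contributes nothing in the first case and $\cDl^+$ nothing in the second, which is immediate from the sign of the weights, and that no $\Ke$-invariants arise from combinations mixing several $\nu$'s — they cannot, since distinct $\Phi_{d,\nu}$ lie in distinct $\Ke$-isotypic components.) This yields exactly the two basis vectors $P_+^\vee\otimes\Phi_{d,l+2}\otimes e_{-l}$ and $P_-^\vee\otimes\Phi_{d,-l-2}\otimes e_{l}$, proving the asserted formula.

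The only genuinely delicate point is the bookkeeping of the $\Ke$-action on the coefficient module $\MlZi$: one must be sure that the realization of $\MlZi$ as degree-$l$ polynomials in $U,V$ is the one on which $c_2$ (\ref{conj}) diagonalizes the $\Ke^{(1)}$-action with $e_{\pm l}=(U\pm iV)^l$ the extreme weight vectors, consistent with the convention in (\ref{orient}) for the character $e$. Once the weights are pinned down, everything is a finite check. A secondary subtlety is integrality: a priori the $\Ke$-invariants over $\Zit$ (where all the decompositions become isomorphisms) could fail to descend to $\Zi$, but since each candidate invariant $P_\pm^\vee\otimes\Phi_{d,\pm(l+2)}\otimes e_{\mp l}$ already lies in the honest $\Zi$-lattice $\Zi P_\pm^\vee\otimes(\bigoplus\Zi\Phi_{d,\nu})\otimes\MlZi$ (no denominators appear in these extreme-weight $\Phi$'s, as $\pm(l+2)-2d$ parity is handled and the relevant $\Phi_{d,\nu}$ with $\nu=\pm(l+2)$ sits in $A'_{\lambda_C}(\tKK)$ without inverting $2$), the $\Zi$-module they span is already the full module of invariants. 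I expect the weight count itself to be routine; the main place to be careful is matching the three sign/parity conventions ($e$, $\Phi_{d,\nu}$, and the polynomial model of $\MlZi$) so that "total weight zero" is computed correctly.
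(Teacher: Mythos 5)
Your proposal is correct and is exactly the computation the paper has in mind: the paper's own proof is the single word ``obvious,'' and what it leaves implicit is precisely your weight count --- the complex vanishes in degrees $0$ and $2$, so $H^1$ is the full degree-one term, and $\Ke$-equivariance forces the image of $P_\pm$ (of weight $\pm 2e$) into the weight-$(\pm 2e)$ line of $\cDl\otimes\MlZi$, which by the inequalities $\nu\geq l+2$ (resp.\ $\nu\leq -l-2$) and $-l\leq k\leq l$ is spanned only by $\Phi_{d,\pm(l+2)}\otimes e_{\mp l}$. Your extra care about the $\Zi$ versus $\Zit$ lattices and the sign conventions is sensible but does not change the argument.
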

\begin{proof}
obvious
\end{proof}
If we replace $\tKK$ by $\KK=\Ke,$ then we get the same, but we have to multiply the right hand side 
by $\Lambda^\pkt{\frak c}_\Z.$

\medskip
It is clear from the construction,  that the element ${\bf c}$ in the Galois group acts on $\cDl$ and more precisely 
we  have ${\bf c}(P_+)=P_-,\; {\bf c}(\Phi_{d,\nu}) = \Phi_{d,-\nu},\;{\bf c}(e_l)=e_{-l}.$ 
Then we put
\begin{align}
\Omega_{d,l}= P_+^\vee\otimes \Phi_{d,l+2}\otimes e_{-l},\;\bar{ \Omega}_{d,l}= P_-^\vee\otimes \Phi_{d,-l-2}\otimes e_{l},
\end{align}
we may think of these elements as holomorphic and antiholomorphic 1-forms.

We   define $\cD_{\lambda,\Z}$
as  the $(\fg_\Z,\KK))$ module of  elements in $\cDl$ fixed by ${\bf c}.$
Then 
\begin{align}
\Hom_{\KK}(\Lambda^1(\fg_\Z/\tfk_\Z), \cD_{\lambda,\Z} \otimes \M_{\lambda,\Z})=\Z(\Omega_{d,l}+\bar{ \Omega}_{d,l})
\oplus \Z(i\Omega_{d,l}-i\bar{ \Omega}_{d,l})
\end{align}
We introduce some abbreviations
$$ \omega_{d,l}^{(1)}=\Omega_{d,l}+\bar{ \Omega}_{d,l},\; \omega_{d,l}^{(2)}=i\Omega_{d,l}-i\bar{ \Omega}_{d,l},
\;\eta=\bpm -1& 0\cr 0 & 1\epm 
$$
We still have the action of $\OO(2)/\SO(2)=\Z/2\Z=\pi_0(G(\R)).$     The nontrivial element is represented 
by the matrix $\eta $  defined above.  Under this action the  module 
$\Hom_{\Ke}(\Lambda^1(\fg_\Z/\tfk_\Z), \cD_{\lambda,\Z} \otimes \M_{\lambda,\Z})$ decomposes 
into a $+$ and a $-$ eigenspace. A straightforward computation shows that
\begin{align}
\eta(P^\vee_{\pm})=P^\vee_{-(\pm)}  ,\; \eta( \Phi_{d,\nu})= \Phi_{d,-\nu}, \eta(e_{\pm l})=(-1)^{\frac{2d-l}{2}}e_{-(\pm l)} 
\end{align}

\begin{prop}
The elements $\omega_{d,l}^{(1)},\;\omega_{d,l}^{(2)}\in \Hom_{\KK}(\Lambda^1(\fg_\Z/\tfk_\Z), \cD_{\lambda,\Z} \otimes \M_{\lambda,\Z})$ are generators  of the $\pm $ eigenspaces (maybe up to a power of  $2$). We have
$$ \eta(\omega_{d,l}^{(1)})=  (-1)^{\frac{2d-l}{2}}\omega_{d,l}^{(1)},\; \eta(\omega_{d,l}^{(2)})=-
(-1)^{\frac{2d-l}{2}}\omega_{d,l}^{(2)}$$
\end{prop}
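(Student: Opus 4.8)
The plan is to verify the claim by a direct computation using the formulas for the action of $\mathbf c$ and of $\eta$ that are recorded just above the statement. First I would recall the previous proposition, which tells us that $H^1(\fg_\Z,\tKK,\cDl\otimes\MlZi) = \Zi\,\Omega_{d,l}\oplus\Zi\,\bar\Omega_{d,l}$ with $\Omega_{d,l}=P_+^\vee\otimes\Phi_{d,l+2}\otimes e_{-l}$ and $\bar\Omega_{d,l}=P_-^\vee\otimes\Phi_{d,-l-2}\otimes e_l$; and that $\cD_{\lambda,\Z}$ is the $\mathbf c$-fixed part of $\cDl$, so that $\Hom_\KK(\Lambda^1(\fg_\Z/\tfk_\Z),\cD_{\lambda,\Z}\otimes\M_{\lambda,\Z})$ is the $\mathbf c$-fixed $\Z$-lattice inside $\Zi\,\Omega_{d,l}\oplus\Zi\,\bar\Omega_{d,l}$. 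Since $\mathbf c(\Omega_{d,l})=\bar\Omega_{d,l}$ (because $\mathbf c$ swaps $P_+^\vee\leftrightarrow P_-^\vee$, sends $\Phi_{d,l+2}\mapsto\Phi_{d,-l-2}$ and $e_{-l}\mapsto e_l$, and is anti-$\Z$-linear as an element of $\Gal(\Q(\mathbf i)/\Q)$), the $\mathbf c$-fixed lattice is exactly $\Z\,\omega_{d,l}^{(1)}\oplus\Z\,\omega_{d,l}^{(2)}$ with $\omega_{d,l}^{(1)}=\Omega_{d,l}+\bar\Omega_{d,l}$ and $\omega_{d,l}^{(2)}=i\Omega_{d,l}-i\bar\Omega_{d,l}$ — which is what the equation preceding the statement already asserts. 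This shows these two elements form a $\Z$-basis (the hedge ``maybe up to a power of $2$'' absorbs the fact that, at the prime $2$, the various inclusions of the $A'$-lattices into the $A$-lattices are only isomorphisms after inverting $2$, so the integral generators may differ from $\omega_{d,l}^{(1)},\omega_{d,l}^{(2)}$ by a $2$-power).

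Next I would compute the $\eta$-action. Using the displayed formulas $\eta(P^\vee_\pm)=P^\vee_\mp$, $\eta(\Phi_{d,\nu})=\Phi_{d,-\nu}$, and $\eta(e_{\pm l})=(-1)^{(2d-l)/2}e_{\mp l}$, and noting that $\eta$ acts $\Z$-linearly (it is the geometric element of $\pi_0(G(\R))$, not a field automorphism, so it commutes with multiplication by $i$), we get
\begin{align*}
\eta(\Omega_{d,l}) &= \eta(P_+^\vee)\otimes\eta(\Phi_{d,l+2})\otimes\eta(e_{-l}) = P_-^\vee\otimes\Phi_{d,-l-2}\otimes(-1)^{\frac{2d-l}{2}}e_l = (-1)^{\frac{2d-l}{2}}\bar\Omega_{d,l},\\
\eta(\bar\Omega_{d,l}) &= (-1)^{\frac{2d-l}{2}}\Omega_{d,l}.
\end{align*}
Hence $\eta(\omega_{d,l}^{(1)}) = \eta(\Omega_{d,l}+\bar\Omega_{d,l}) = (-1)^{(2d-l)/2}(\bar\Omega_{d,l}+\Omega_{d,l}) = (-1)^{(2d-l)/2}\omega_{d,l}^{(1)}$, and $\eta(\omega_{d,l}^{(2)}) = \eta(i\Omega_{d,l}-i\bar\Omega_{d,l}) = i(-1)^{(2d-l)/2}\bar\Omega_{d,l} - i(-1)^{(2d-l)/2}\Omega_{d,l} = -(-1)^{(2d-l)/2}(i\Omega_{d,l}-i\bar\Omega_{d,l}) = -(-1)^{(2d-l)/2}\omega_{d,l}^{(2)}$. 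This is exactly the pair of eigenvalue relations in the statement, and it shows $\omega_{d,l}^{(1)}$ spans the $(-1)^{(2d-l)/2}$-eigenspace while $\omega_{d,l}^{(2)}$ spans the $-(-1)^{(2d-l)/2}$-eigenspace; combined with the first paragraph, each is a generator of the corresponding $\pm$ eigenspace (up to a $2$-power).

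I do not expect any genuine obstacle here — the proof is a bookkeeping exercise, and the previous proposition has already done the structural work of pinning down $H^1$. The only point requiring a little care is the bracketed caveat about powers of $2$: one should note explicitly that the $\Z[\tfrac12]$-structures make all identifications exact away from $2$, so $\omega_{d,l}^{(1)},\omega_{d,l}^{(2)}$ certainly generate the eigenspaces over $\Z[\tfrac12]$, and over $\Z$ they may fail to be saturated only at $2$; this is harmless for the intended application (periods), where one works over $\Zit$ anyway. A secondary small point worth stating clearly is that $\eta$ acts $\C$-linearly (equivalently $\Z[i]$-linearly) on all the modules in sight, which is why it passes the scalar $i$ in $\omega_{d,l}^{(2)}$ unchanged — in contrast to $\mathbf c$, which conjugates it; conflating the two would flip a sign. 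With these two remarks made, the computation above is the whole proof.
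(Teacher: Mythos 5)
Your computation is correct and is exactly the verification the paper has in mind — its own proof is just the word ``obvious,'' and what you have written is the obvious bookkeeping spelled out: the $\mathbf c$-fixed lattice of $\Zi\,\Omega_{d,l}\oplus\Zi\,\bar\Omega_{d,l}$ under the semilinear swap is $\Z\,\omega_{d,l}^{(1)}\oplus\Z\,\omega_{d,l}^{(2)}$, and the $\eta$-eigenvalues follow from the displayed action on $P_\pm^\vee$, $\Phi_{d,\nu}$, $e_{\pm l}$. Your two cautionary remarks (the $2$-power caveat, and the $\Z[i]$-linearity of $\eta$ versus the semilinearity of $\mathbf c$) are precisely the points where a sign or saturation error could creep in, so it is good that you made them explicit.
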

\begin{proof}Again obvious
\end{proof}
We remember that $d \in \frac{1}{2}\Z$ and  satisfies $2d\equiv l \mod 2,$  hence it is well defined modulo $\Z.$
 Which of the two generators $\omega_{d,l}^{(1)},\;\omega_{d,l}^{(2)}$ is the generator of the $+$ eigenspace
 depends on $d$ and they change role if we replace $d$ by $d+1.$ This flip plays a decisive role
 in the definition of the periods in \cite{ha-ra}.
 
 Our module  $\cDlZ$ is irreducible but  its base extension $\cDlZ\otimes \Zi$ is reducible, it decomposes into
 $\cDl^+\oplus \cDl^-.$ If we enlarge $\Ke$  to $\KK = \Ke \rtimes <\eta>$ then  $\cDlZ\otimes \Zi$  becomes 
 an absolutely irreducible $(\fg_\Z, \KK )$-module.
 
 Then it is easy to see ( see for instance [sl2neu.pdf] that in the case $\lambda$ regular  (i.e. $l\not= 0,1$ )
 $\cDlZ$ is the only 
 irreducible $(\fg_\Z, \tKK)$ module which has non trivial cohomology  with coefficients in $\M_{\lambda,\Q}.$
 If $l=0$  then the trivial one dimensional $(fg_\Z,\Ke)$  -module $\Z$ has non trivial cohomology in degree
 $0$ and $2$ and this module completes the list of  modules which have non trivial cohomology  with
 coefficients in some $\M-{\lambda,\Z}.$
 
 \subsection{The intertwining operator}\label{IO}
 We come back to our highest  weight $\la =l\g_1+d\delta,$ we assume $l\geq 0.$ In equation (\ref{isoint})
 we wrote down  an intertwining isomorphism $\Psi_{d,l}$ between the two discrete series representation.
 If we look at the inverse of this operator and observe that $ \cD_{\lambda^-}^{+}\oplus  \cD_{\lambda^-}^{-}$
 is a quotient of $\fI_B^G\Z[\la^-]$ and   $ \cD_{\lambda }^{+}\oplus  \cD_{\lambda}^{-}$ is a submodule
 of  $\fI_B^G\Z[\la+2\rho]$ then our isomorphism provides  an intertwining operator 
 \begin{align}\label{Talgminus}
 T^{alg}_{\la^-}:\fI_B^G\Z[\la^-] \otimes \Q  \to \fI_B^G\Z[\la+2\rho]\otimes \Q 
\end{align}
which is  unique up to a scalar and we normalized  by fixing its value on a lowest $\KK$ type.

By the same token we get an operator in the opposite direction 
\begin{align}\label{Talgplus}
 T^{alg}_{\la+2\rho}:\fI_B^G\Z[\la+2\rho] \otimes \Q  \to \fI_B^G\Z[\la^-]\otimes \Q.
\end{align}
In this direction the space of homomorphisms is of rank one. The homomorphisms factor over a finite dimensional quotient.

In our situation here the maximal  torus  $T=\Gm\times\Gm$ and so far we only discussed the modules which are induced from rational characters. In this case we also may induce characters $\lambda\otimes\epsilon$ where 
$\epsilon: \tKK^T=\mu_2\times \mu_2\to \mu_2$ is a sign character, it is the form $(\pm 1,\pm 1)\mapsto (\pm 1)^{m_1} (\pm 1)^{m_2}.$ Then the induced module
$\fI_B^G \Z[\lambda\otimes \epsilon]$ is still reducible if the sign character $\epsilon=\ul m$ factors over the determinant, i.e.
we have $m_1=m_2.$ But if the sign character does not factor over the determinant then the induced 
module $\fI_B^G \Z[\lambda\otimes \epsilon] $ is in fact irreducible.

\bigskip
\subsection{Transcendental Harish-Chandra modules}\label{THC}
We return briefly to the transcendental theory of Harish-Chandra modules, we tensor everything by $\C$
and then our modules become Harish-Chandra modules in the traditional sense.  The group scheme $\Ke$
is replaced by  the group $\SO(2)=\Ku=\Ke(\R).$  The following is of course well known.

The  evaluation of the highest
weight $\la$  on $T(\R)$   provides an (algebraic)  character $\lambda_\R :T(\R)\to   \R^\times.$  We define
a larger class  of  (analytic) characters  $\chi:T(\R)\to \C^\times$ which are of the form  
\begin{align}
\chi =  (z, d,\ul m)  : \bpm  t_1 &  0\cr 0 &t_2\epm  \mapsto ( \vert \frac{t_1}{t_2} \vert)^{z/2}  \vert t_1t_2\vert ^{d} (\frac{t_1 }{\V{t_1}})^{m_1}( \frac{t_2}{\V{t_2}})^{m_2}
\end{align}
where  $z$ a complex variable and  $\ul m=(m_1,m_2)$ is a pair of integers  $\mod 2.$ The central
contribution given by the half integer $d$ should be fixed.  For us it seems to
be adequate to distinguish between the character $\la\in X^*(T)$ and its
evaluation $\la_\R.$  For $\lambda= l \gamma_1+d\delta$ we have
 $$
 \la_\R= \chi=(z,d,\ul m) \iff  z=l  \text{ and } m_1 \equiv \frac{l}{2}+d, m_2\equiv - \frac{l}{2}+d\mod 2
 $$ 
 We call such a $\chi$ algebraic, we say   that $\chi$ is cohomological  if $l\not=1.$ We say that $\chi$
 is of algebraic type if $z$ is an integer but the parity conditions may fail. 
 
We   define the induced representation
 \begin{align*}
I_B^{G,\infty} \chi = \{ f: \Gl_2(\R) \to \C\vert  f\in {\cal C}_\infty(\Gl_2(\R)),f( \bpm  t_1 &  u\cr 0 &t_2\epm )g =\chi(t)f(g)\},
\end{align*}
this is a $\Gl_2(\R)$ module. The submodule  of $K_\infty $ finite functions  is our  
   induced Harish-Chandra module $I_B^G \chi,$ for $\chi=\lambda_\R$ we have 
   \begin{align*}
I_B^G \lambda_\R  =\fI_G^G\lambda \otimes \C.
\end{align*} 
Let $m=m_1+m_2\mod 2$ then the 
  module is  a direct sum
\begin{align}
I_B^G \chi = \bigoplus_{\nu \equiv m\mod 2} \Phi^\chi_{\nu}
\end{align}
where 
\begin{align}
\Phi^\chi_{\nu}( \bpm  t_1 &  u\cr 0 &t_2\epm \cdot \bpm \cos(\phi) & \sin(\phi)\cr -\sin(\phi)& \cos(\phi)\epm )= (\vert \frac{t_1}{t_2} \vert)^{z/2}  \vert t_1t_2\vert ^{d} (\frac{t_1 }{\V{t_1}})^{m_1}( \frac{t_2}{\V{t_2}})^{m_2}  e^{2\pi i \nu\phi}
\end{align}
We have essentially the same formulae for the action of the Lie algebra
 \begin{align}\label{diffz}
   Y \Phi^\chi_{\nu} = i\nu  \Phi^\chi_{\nu} ,\; 
P_+  \Phi^\chi_{\nu} = (z + \nu )  \Phi^\chi_{\nu+2},\; 
P_- \Phi^\chi_{\nu}= (z -\nu ) \Phi^\chi_{\nu-2} 
\end{align}
note that the parity of $\nu$ is equal to the parity of $m_1+m_2.$ (see Slzweineu.pdf)

 For $\chi=(z,d,\ul m)$ we put $\chi^\prime=(-z,d,\ul m^\prime).$ Then we can write down 
the classical
(standard) intertwining operator
\begin{align}
T^{  st}_\chi    : I_B^G \chi   \to   I_B^G (\chi^{\prime}\otimes \rho^2)
\end{align}
which is defined by 
\begin{align}
T^{  st}_\chi  (f)(g)  =\int_{-\infty}^{\infty}  f (\bpm 0 & 1\cr -1 &0 \epm \cdot \bpm 1& u \cr 0 &1 \epm g )du
  \end{align}
where $du$ is of course the Lebesgue measure on $\R.$ This integral converges for $\Re(z)>>0$
and has an meromorphic  continuation into the entire $z$-plane. We need to locate the poles and we want to show
that this operator is never identically zero.

We introduce the notation $\chi^\dagger=\chi^{\prime}\otimes \rho^2.$
We evaluate it at the smallest $\Ku$ type, which is $\Phi^\chi_{ 0} $ if $m_1+m_2$ is even and  $\Phi^\chi_{ 1} $
if $m_1+m_2$ is odd. Let us put $\epsilon(\ul m)=0$ if $m_1+m_2$ is even and $\epsilon(\ul m)=1$ else.  Then an easy computation shows
\begin{align}
T^{  st}_\chi (\Phi^\chi_{ \epsilon(\ul m)})= \frac{\Gamma(\frac{z+\epsilon(\ul m)-1}{2})\Gamma(\frac{1}{2})} {\Gamma(\frac{z+\epsilon(\ul m)}{2})}  \Phi^\cd_{ \epsilon(\ul m)}
\end{align}
Then  we can evaluate $T^{  st}_\chi$ on any element $\Phi^\chi_{ \nu},$  we use the recursion provided by the formulae
(\ref{diffz}).   We get for $n\geq 1$
\begin{align}
P_+^n(\Phi^\chi_{ \epsilon(\ul m)})=(z+\epsilon(\ul m))\dots (z+\epsilon(\ul m)+ 2(n-1) ) \Phi^\chi_{ \epsilon(\ul m)+2n }
\end{align}
and on the other side (we have to replace $z$ by $2-z$) 
\begin{align}
P_+^n(\Phi^\cd_{ \epsilon(\ul m)})=(2-z+\epsilon(\ul m))\dots (2 -z+\epsilon(\ul m)+ 2(n-1)  ) \Phi^\cd_{ \epsilon(\ul m)+2n }
\end{align}
Therefore,  if $\nu=\epsilon(\ul m) +2n$
\begin{align}\label{Tst}
T^{st}_\chi(\Phi^\chi_{ \nu})=\frac{(2-z+\epsilon(\ul m))\dots ( 2-z+\epsilon(\ul m)+\nu-2)  )}{(z+\epsilon(\ul m))\dots (z+ \nu-2 )}   \frac{\Gamma(\frac{z+\epsilon(\ul m)-1}{2})\Gamma(\frac{1}{2})} {\Gamma(\frac{z+\epsilon(\ul m)}{2})} 
  \Phi^\cd_{ \nu}
\end{align}
(Note that here $\nu> 1,$  the product is empty if $\nu=0,1$ and hence it has value one if this is the case. Of
course we get a corresponding formula for $\nu\leq 0$).
We say that the intertwining operator  $T^{st}_\chi$ is holomorphic at $\chi=(z_0,d,\ul m)$ if for all
$\nu\equiv \epsilon(m)\mod 2 $  the function  $T^{st}_\chi(\Phi^\chi_{d,\nu})/\Phi^\cd_{d,\nu}$
is    holomorphic at $z_0.$   Otherwise we say that $T^{st}_\chi(\Phi_{d,\nu})$
has a pole at $z_0.$

\begin{prop} 
The intertwining operator $T^{st}_\chi$ has its poles  at the arguments $z_0=1-\epsilon(\ul m), -1-\epsilon(\ul m),\dots  $
and these are first order poles.
At these arguments  $T^{st}_\chi(\Phi^\chi_{ \nu})$ has a pole for all values $\nu.$  
\end{prop}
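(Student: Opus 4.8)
The plan is to read everything off from the explicit formula (\ref{Tst}) together with the decomposition $I_B^G\chi = \bigoplus_{\nu} \Phi^\chi_\nu$. Since $T^{st}_\chi$ carries $\Phi^\chi_\nu$ to a scalar multiple of $\Phi^\cd_\nu$, by the definition of "holomorphic" / "has a pole" given just above the proposition it suffices to study, for each $\nu\equiv\epsilon(\ul m)\bmod 2$, the scalar meromorphic function
$$c_\nu(z):=T^{st}_\chi(\Phi^\chi_\nu)/\Phi^\cd_\nu,$$
and to show that its pole set is exactly $\{\,1-\epsilon(\ul m)-2k : k\ge 0\,\}$, that every such pole is simple, and that the set is independent of $\nu$. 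Write $\epsilon=\epsilon(\ul m)$ and split $c_\nu(z)=\Gamma(\tfrac12)\,G(z)\,R_\nu(z)$, where
$$G(z)=\frac{\Gamma\!\big(\tfrac{z+\epsilon-1}{2}\big)}{\Gamma\!\big(\tfrac{z+\epsilon}{2}\big)}$$
is the Gamma quotient in (\ref{Tst}) and $R_\nu$ is the rational prefactor (the ratio of the two finite products; it equals $1$ when $\nu=\epsilon$, and for $\nu<0$ one uses the analogous formula obtained by iterating the recursion (\ref{diffz}) for $P_-$ instead of $P_+$).

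First I would determine the divisor of $G$. Since $1/\Gamma$ is entire, $G$ has only simple poles, located at the poles of $\Gamma(\tfrac{z+\epsilon-1}{2})$, namely $z=1-\epsilon,-1-\epsilon,-3-\epsilon,\dots$, and only simple zeros, located at the poles of $\Gamma(\tfrac{z+\epsilon}{2})$, namely $z=-\epsilon,-2-\epsilon,-4-\epsilon,\dots$; elsewhere it is holomorphic and non-vanishing. These two arithmetic progressions lie in \emph{different} residue classes modulo $2$, so nothing cancels inside $G$, and the pole locus of $G$ is already exactly the set claimed in the proposition. It therefore only remains to see that multiplication by $R_\nu$ neither creates new poles nor destroys any pole of $G$.

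Next I would compute the divisor of $R_\nu$ by expanding the two products via the recursions displayed just before (\ref{Tst}). One finds that the zeros of $R_\nu$ all lie at integer arguments $\ge 2$, while — after cancelling, when $\epsilon=1$, the single common linear factor that appears at $z=1$ in both numerator and denominator — the poles of $R_\nu$ form a \emph{finite} subset of the progression $\{-\epsilon-2j:j\ge 0\}$, i.e. a subset of the zero locus of $G$. Two observations then finish the argument: (i) the poles of $R_\nu$ are simple and are cancelled exactly by (simple) zeros of $G$, so $c_\nu$ stays holomorphic there; and (ii) the zeros of $R_\nu$, sitting at arguments $\ge 2$ and hence in the residue class $\epsilon\bmod 2$, cannot land on a pole of $G$, because the poles of $G$ lie in the class $1-\epsilon\bmod 2$ and are all $\le 1$. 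Hence $c_\nu(z)=\Gamma(\tfrac12)G(z)R_\nu(z)$ has simple poles at precisely $z=1-\epsilon,-1-\epsilon,-3-\epsilon,\dots$, with non-zero residue at each (a product of $\Gamma(\tfrac12)$, a non-zero residue of $G$, and the non-zero finite value of $R_\nu$ there), and this locus does not depend on $\nu$. Thus $T^{st}_\chi(\Phi^\chi_\nu)$ has a genuine first-order pole at each of these arguments for every $\nu$, which is exactly the assertion.

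The step I expect to be the main obstacle is the bookkeeping in the third paragraph: keeping the two recursion directions $\nu>0$ and $\nu<0$ straight, performing the internal cancellation at $z=1$ in $R_\nu$ in the case $\epsilon(\ul m)=1$ before comparing with the divisor of $G$, and using the parity (mod $2$) argument to guarantee both that the surviving zeros of $G$ are never themselves poles of $G$ and that the zeros of $R_\nu$ never annihilate a pole of $G$. Once this is organized, simplicity of the poles is immediate from the simplicity of the poles of the single factor $\Gamma(\tfrac{z+\epsilon-1}{2})$, and the independence of $\nu$ is transparent, since $R_\nu$ contributes zeros and poles only at arguments that are, respectively, away from and cancelled against the fixed divisor of $G$.
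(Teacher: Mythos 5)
Your argument is correct and essentially the same as the paper's: both read the pole structure directly off the explicit formula (\ref{Tst}); the paper merely absorbs the denominator product into the Gamma factor via the functional equation, $(z+\epsilon(\ul m))\cdots(z+\nu-2)\,\Gamma(\tfrac{z+\epsilon(\ul m)}{2})=2^n\Gamma(\tfrac{z+\epsilon(\ul m)+2}{2}+n-1)$, so the whole denominator is visibly pole-producing-free at once, whereas you cancel its zeros one by one against the simple zeros of the Gamma quotient $G$ and then use the parity argument to see that the numerator's linear factors never kill a pole of $G$. (One harmless slip: for $\epsilon(\ul m)=1$ there is no common linear factor at $z=1$ in $R_\nu$ --- its numerator vanishes at $z=3,5,\dots,\nu$ and its denominator at $z=-1,-3,\dots$ --- but nothing in your argument actually uses this aside.)
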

\begin{proof}
This is essentially an exercise in using the properties of the $\Gamma-$ function. We look at the denominator of 
the expression in (\ref{Tst}).  We have 
$$(z+\epsilon(\ul m))\dots (z+\nu-2 )\Gamma(\frac{z+\epsilon(\ul m)}{2}) =2^n\Gamma(\frac{z+\epsilon(\ul m)+2}{2}+n-1)$$ 
the $\Gamma-$ function has no zeroes,  hence the denominator does not contribute to poles. The $\Gamma-$ factor
in the numerator  has its poles exactly at the above list, these are first order poles and they do not cancel against 
the product of linear factors in front of the $\Gamma-$ factor.
\end{proof}
 
 We can form the composite $T^{  st}_{\cd} \circ T^{  st}_\chi $
and this is an endomorphism of $I_B^G \chi. $ Since for a general value of $z$ the module is irreducible
the operator must be a scalar $\Lambda(\chi)$  and it is not too difficult to write down this scalar.   We define $a(\ul m)=+1$ if $\epsilon(\ul m)=0$
otherwise $a(\ul m)=-1.$ 

 $$ \Lambda(\chi)= \frac{\Gamma(\frac{z-1+\epsilon(\ul m)}{2})
  \Gamma(\frac{1-z+\epsilon(\ul m)}{2})}  {\Gamma(\frac{z +\epsilon(\ul m)}{2})
  \Gamma(\frac{2-z+\epsilon(\ul m)}{2})} = 
 \frac{2}{z-1}\bigl(\frac{\sin(\frac{\pi}{2}z)} {\cos(\frac{\pi}{2}z)} \bigr)^{a(\ul m)} $$
 
 For us the important arguments for $\chi$ are the values $\chi=(l+2,m ) $ and $\chi=(-l,m ) $ where $l\geq 0$ is an integer
 and $l\equiv m\mod 2,$  we called these values of $\chi$ cohomological. Our proposition tells us that  $T^{  st}_\chi $ is holomorphic at 
 cohomological arguments.  But we also see
 that $\Lambda(\chi)$ vanishes at these arguments, i.e.  $T^{  st}_{\cp} \circ T^{  st}_\chi=0. $
Since it is clear that  the linear map $T^{  st}_\chi$ is never zero  it follows  that   $T^{  st}_\chi $ maps $I_B^G\chi$
to the kernel of $T^{  st}_{\cd}.$ 

This is of course consistent with our  results in section \ref{Glzwei},
if we tensorize the two exact sequences (\ref{exact1}),(\ref{exact2}) by the complex numbers and apply our intertwining operator
to the terms in the middle of the exact sequences then we get  for $\chi=\lambda_\R , \la=l\g_1+d\delta, l\geq 0$  
\begin{align}\label{Tplus}
T^{  st}_{\chi\otimes \rho_\R^2}   : I_B^G \chi\otimes \rho_\R^2 = \fI_B^G\Z[\la+2\rho]\otimes \C  \to \M_{\la,\C} \subset \fI_B^G\Z[\la^-]\otimes \C
\end{align}
and 
\begin{align}\label{Tminus}
T^{  st}_{\cd }: I_B^G \cd = \fI_B^G\Z[\la^-]\otimes \C \to ( \cD_\lambda^+ \oplus \cD_\lambda^- )  \otimes \C\subset
\fI_B^G\Z[\la+2\rho]\otimes \C.
\end{align}
These two intertwining operators are of course multiples of our earlier operators $ T^{alg}_{\la+2\rho}\otimes \C,
 T^{alg}_{\la^-}\otimes \C$. These earlier operators have been normalized
such that they gave the "identity" on certain $\Ku$ types. For the operator   $T^{alg}_{\la+2\rho}\otimes \C$
this is the $\Ku$-type $\Phi_{l}$ and for $T^{alg}_{\la^-}\otimes \C$ this is $\Phi_{d,l+2}.$ Then a straightforward
computation yields  for $\chi=\lambda_\R.$
\begin{align}
\begin{matrix}T^{  st}_{\chi\otimes \rho^2}  =\pi\;2^{\frac{3l-\epsilon(m)}{2}}(-1)^{\frac{l-\epsilon(m)}{2}} T^{alg}_{\lambda+2\rho}\cr
T^{  st}_{\cd}  =\pi \;\frac{1}{2^{\frac{l+2-\epsilon(m)}{2}}}(-1)^{\frac{l-\epsilon(m)}{2}} T^{alg}_{\lambda^-} 
\end{matrix}
\end{align}

This tells us that the operators $\frac{1}{\pi}T^{  st}_{\chi^{-1}},\frac{1}{\pi}T^{  st}_{\chi\otimes \rho^2}$ evaluated at cohomological arguments
are  defined over $\Q(i).$ They even induces n isomorphisms between the $\Zit$ modules of the 
cohomologically relevant  $\KK-$ types.  

We also have a brief look  at the induced modules which are not cohomological, these are the modules induced
from $\chi=(l,d,\ul m)$ where $l$ is an integer $2d\equiv l\mod 2, l-1\equiv \epsilon(\ul m)\equiv 0\mod 2.$ If now
$l-1 + \epsilon(\ul m)\geq 2 $ then  the operator $T_\chi^{st} $ is defined over $\Q(i),$ because
$\Gamma(1/2)$ appears in the numerator and in the denominator.  If $l+\epsilon(\ul m)-1=0,-2,-4,\dots$
then the intertwining operator has a pole. But we can modify the operator and define the normalized operator
\begin{align}
T^{norm}_\chi  = \frac{1}{\Gamma(\frac{z+1-\epsilon(\ul m)}{2})} T^{st}_\chi
\end{align}
This operator is holomorphic everywhere and at the arguments $\chi=(l,d,\ul m)$ which are not cohomological
it is an isomorphism and defined over $\Q(i).$

 \section{Induction of Harish -Chandra modules}
 \subsection{The general context}
  We pick a standard parabolic subgroup   subgroup $P/\Spec(\Z),$  let $U_P/\Spec(\Z)$ be its unipotent
  radical and $M=P/U_P$ its Levi quotient.  We can also view  $M/\Spec(\Z) $ to be  the Levi subgroup
  which is stable under the Cartan involution, this means that $M=P\cap P^\Theta.$    Then $\Theta$ induces 
  a Cartan involution on   the semi simple component $M^{(1)},$  it is simply
   connected. Let $ \KK^{M,1}\subset M^{(1)}$ be the fixed point scheme. Let $C_M$ be the connected center of $M$, it is a split torus.
   Then we define $\tKK^M= \KK^{M,1}\cdot C_M.$ The intersection $P\cap \Ke $ can be projected 
   down to $M$ and  yields a (possibly slightly larger) definite subscheme $\KK^M\supset \KK^{M,1}.$
   
  Let us assume
  we have a highest weight module $\M_{\mu,\Z} $ 
and a  $(\fm_\Z, \KK^M)$ Harish-Chandra module  $\cV$ over some ring $R$ for instance 
   $R=\Z, R=\Zi, R=\Zit.$     We give a construction of    the induced 
   $(\fg_\Z,\Ke)-$  module  $\fI_P^G \cV.$ We are interested in the case that $H^\pkt(\fm_\Z, \KK^M, \cV\otimes \M_{\mu,\Z}) \not=0.$ In this case we  compute the cohomology $H^\pkt(\fg_\Z,\Ke, \M_{\lambda,\Z})$   by adapting the method of Delorme.

   \bigskip
   We start from the module $\cV\otimes A(\KK). $ In the following $R_1$ will be a "variable" commutative ring
   containing $R.$  The algebra $A(\KK)$ is a $\KK\times \KK$ module,  recall that this means that for 
   an element $f\in A(\KK)\otimes R_1$  and $x, k_1,k_2\in \KK(R_1)$ we define 
  $$(R_{(k_1,k_2)} f)( k) = f(k_1^{-1}k k_2).$$ 
  Let $\KK^M= P\cap \KK. $ The projection $\KK^M\to  M $ is an injective homomorphism, we identify 
  $\KK^M$ with its image. This allows us to define the 
  submodule 
\begin{align}\label{defind}  
\begin{matrix}
(\cV\otimes A(\KK))^{\KK^M}=  \{\sum v_i\otimes f_i  \vert  \text { for all } k\in \KK^M(R_1) \cr \text{ we have } 
   \sum k v_i \otimes f_i = \sum v_i \otimes R_{(k^{-1} ,e)} f_i\} 
   \end{matrix} 
   \end{align}

     We show that this   is a $(\fg_\Z, \KK)-$ module!   The action of $\KK$ is is by translation from the right on the second factor:  For $k\in \KK(R_1) $ and any element $v\otimes f\in  (\cV\otimes A(\KK))^{\KK^M}\otimes R_1 $ we define 
     the translate 
      $$
      R_{(e,k)} (v\otimes f )= v\otimes R_{(e,k)} f.
 $$
   Now we have to define the $\fg_\Z$ action. We want to define 
   \begin{align}
\fg_\Z  \times  ( (\cV\otimes A(\KK))^{\KK^M}  )\to  ( (\cV\otimes A(\KK))^{\KK^M}  ).
\end{align} 
To do this we discuss again what happens on the $R_1$ valued points: For $X\in\fg_\Z $ and 
$\sum v_i\otimes f_i\in  (\cV\otimes A(\KK))^{\KK^M}\otimes R_1$ and $k\in \KK(R_1)$  we have to say what 
\begin{align}
X(\sum v_i\otimes f_i) (k )
\end{align}    
should be.  We work with dual numbers $R_1[\epsilon] $ then we should have 
$$
\epsilon X(\sum v_i\otimes f_i) (k )  =  (\sum v_i\otimes f_i) (k \cdot\text{exp}(\epsilon X)  ) - (\sum v_i\otimes f_i) (k)
$$
the pity is that the first summand on the right hand side is not yet defined. To define it  we consider 
the parabolic subgroup $k^{-1} P k \subset G\times R_1$  and observe that  the  linear map
\begin{align}
\Lie (k^{-1} P k ) \oplus \fk_\Z\otimes R_1  \to  \fg_\Z\otimes R_1
\end{align} 
is surjective. Hence we can write $X=  V + U $ where $V\in \Lie (k^{-1} P k ) , U\in \Lie(\KK)\otimes R_1.$
Now we can define 
\begin{align}
 (\sum v_i\otimes f_i) (k \cdot\text{exp}(\epsilon X)  )=  (\sum v_i\otimes f_i)( \text{exp} (\epsilon \Ad(k)(V) k\cdot  \text{exp} ( \epsilon U)) 
\end{align}
 the expression on the right hand side is defined. If we recall the definition of $(\cV\otimes A(\KK))^{\KK^M}$ then
 we see that it is equal to 
\begin{align}
\epsilon (\sum \Ad(k)(V) v_i \otimes f_i (k) +\sum v_i\otimes U f_i (k)) +\sum v_i\otimes f_i (k)
\end{align}
It it also clear that it does not depend on the  decomposition of $X=V+U.$

Hence we can define the induced Harish-Chandra module 
\begin{align}
\fI_P^G \cV  =  (\cV\otimes A(\KK))^{\KK^M}
\end{align}

It is not difficult to show that this satisfies all the conditions 1) to 6). Condition 2) may require
a longer argument. We will discuss an example in the following
section   and in this example it becomes clear why condition 2) is fulfilled.

\bigskip
   \subsection{The integral version of $\D_\lambda$}\label{Dl}
   We apply this induction process  to  a special case of the group $\Gl_n/\Z.$  We want to construct the $\Z$
structure on the modules which are called $\D_\lambda$ in \cite{ha-ra}, 3.1.4.  Let $T/\Z$ be the standard split torus
   and $B\supset T$ the standard Borel subgroup of upper triangular matrices.   The parabolic subgroups
   $P\supset B$ are the standard parabolic subgroups.
   
  Let $\g_1, \dots, \g_{n-1}\in X_\Q^*(T)$   be the dominant fundamental weights, let $\delta$ be the determinant.
For this we choose a self dual highest weight $\lambda= \sum_i^{n-1} a_i \gamma_i + d\delta,$ remember that self dual means $a_i=a_{n-i}.$ We use the usual construction to construct the $G/\Z$-module $\MlZ,$ it is the space 
of sections  $H^0(B\backslash G, \cL_{\lambda^-} )$ as in equation (\ref{Borel-Weyl}). We use the technique of induction to construct the very specific $(\fg_\Z , \Ke)$ modules $\Dl^\epsilon$  (where $\epsilon=\pm 1$)  over $\Zit$ which have  non trivial cohomology  in  lowest degree $b_n$
 \begin{align}
 H^{b_n}(\fg_\Z, \Ke, \Dl^\epsilon\otimes \MlZ)  \iso \Zit
\end{align}
(See also section \ref{motivation})

\medskip
We know that there is only a finite set of isomorphism classes  of irreducible Harish-Chandra  modules over $\C$ which have non trivial cohomology with coefficients in $\M_{\lambda}\otimes \C.$
  If $n$ is even (resp. $n$ is odd)  then there are only 
 two (resp. is only one)  $(\fg_\Z, \Ke )-$ module(s)  which are tempered or which can be the infinite component
 of a cuspidal representation. (See for instance \cite{ha-ra} 3.1 ,\cite{Speh} and \cite{Mog}).
  
 \subsection{The construction of $\Dl^\epsilon$}\label{Dle}
   We consider the parabolic  
  subgroup $^\circ P$ whose simple root system is described by the diagram
 
\begin{align}\label{circ} \circ  -\times -\circ -\times- \dots- \circ(-\times)
\end{align}
 i.e. the  set of simple roots    $\pi_{^\circ M}$ of the semi simple part of the Levi quotient $^\circ M$   consists of those 
simple  which have an odd index. This Levi subgroup can be identified  to
\begin{align}
\prod_{i:i \text{odd} } H_{\ag_i} = \prod \Gl_2 (\times \Gm)
\end{align}
 i.e. each factor is identified to $\Gl_2/\GZ,$  we have an extra  factor $\Gm$ if $n$ is odd. Let $m$
 be the largest odd integer less than $n.$ Note that here we have chosen a splitting of the
 Levi-quotient to a Levi subgroup, this splitting is unique, since we want that our Levi subgroup
 is stable under the Cartan involution.  Let $^\circ M^{(1)}$ be the semi simple component, we
 write  as usual $^\circ M =  {^\circ M}^{(1)}\cdot C_{^\circ M}. $
 
 The standard maximal torus is a product $T=\prod_{i:i \text{odd} } T_i( \times \Gm)$ 
 and for each $i=1,3,\dots, m $ we have 
 \begin{align}
X^*(T_i) \otimes \Q  =   \Q \gamma_{i}^{^\circ M^{(1)}}\oplus \Q \delta_i 
\end{align}  
 where $\gamma_{i}^{^\circ M^{(1)}}= \frac{\ag_i}{2} $ and $\delta_i$ is the determinant on that factor.
 For $n$ odd let $\delta_n$ be the character which sends the last entry $t_n$ to $t_n.$
 
 Let $\bar{B_i} \supset T_i$ be the standard Borel subgroup of upper triangular matrices 
 and let $\bar{B} = \prod_{i:i\text{odd}}{\bar B}_i$ be our Borel subgroup  of $\Mc.$ The  
 $\gamma_{i}^{^\circ M^{(1)}} $ are the dominant fundamental weights with respect to
 the choice of $\bar{B}.$

   We return to the conventions  in the first section and apply our considerations in section \ref{Glzwei} 
   to the factors $H_{\alpha_i}.$ The Cartan involution induces 
   the Cartan involution on each of the factors $H_{\ag_i},$  the group scheme $\prod_i \Ke_i=T_c$ is a maximal
   torus  in the reductive group  $\Ke.$ The character module $X^*(T_c\times \Zi)=\oplus_i \Zi e_i.  $ The Weyl  $W_c$ of this torus 
   acts on the character module by sending $e_i \mapsto \epsilon_i e_{\sigma(i)}$ where $\sigma$ is any permutations,
   where the $\epsilon_i=\pm 1$  and satisfy $\prod \epsilon_i =1$ if $n$ is even.
   
 Let  $B_c\supset \Ke \times \Zi$ be the  Borel subgroup  of 
   $ \Ke \times \Zi$ which contains $T_c$ and for which the   roots $e_1-e_{3},\dots, e_{m-2}-e_{m}, e_{m-2}+e_{m}$
   are the simple positive roots.

   \bigskip
   
 We have a very specific Kostant representative $w_{\rm un}\in W^{^\circ P}.$ The inverse of this permutation
 it is given by 
 
 $$ w_{\rm un}^{-1}=\{ 1\mapsto 1, 2\mapsto n, 3\mapsto 2, 4\mapsto n-1.\dots\}.$$
 The length of this element is equal to $1/2$ the number of roots in the unipotent
 radical of $\mP,$ i.e. 
 
\begin{align}\label{lowestdeg}l(w_{\rm un})=\bc  \frac{1}{4}n(n-2) & \text{  if  }  n \text{ is  even }\cr
 &\cr
\frac{1}{4} (n-1)^2  & \text{ if }   n \;  \text{ is odd } 
\ec
\end{align}

Then
\begin{align}
 w_{\rm un}(\lambda +\rho)-\rho = \sum_{i: i \hbox{ odd}} b_i\gamma_{i}^{^\circ M^{(1)}}   -(2\gamma_2+2\gamma_4+\dots+2\gamma_{m-1}  +\frac{3}{2}  \gamma_{m+1})
 + d\delta
\end{align}   
here $\gamma_2,\gamma_4, \dots $ are the dominant fundamental weights   which have an even index and the
$b_i$ are the cuspidal parameters
 $$  {b}_{2j-1}=\bc 2a_j+2a_{j+1}+\dots + 2a_{\frac{n}{ 2}-1} +a_{\frac{n }{2}}  +n  -2j &\text{  if } n \text{  is  even} \cr
 2a_j+2a_{j+1}+\dots + 2a_{\frac{n-1}{ 2} }+n   -2 j   & \text{ if }   n   \text{ is odd  }
 \ec
 $$ 
A simple computation shows that we can rewrite the expression for $ w_{\rm un}(\lambda +\rho)-\rho $

\begin{align}
\wu\cdot \lambda =\sum_{i: i \hbox{ odd}} ( b_i\gamma_{i}^{^\circ M^{(1)}}+  (c(i,n)+d)\delta_i )+ \begin{cases} 0 &\cr  
( - \frac{n-1}{2}+d)\delta_n  &  
 \end{cases}
 \end{align}
where the coefficients $c(i,n)$ are given by the formula

\begin{align}
c(i,n) = \begin{cases} \frac{n-i}{2} & \text{ if } n \text{ even }\cr
 \frac{n-1-i}{2} & \text{ if } n \text{ odd }
 \end{cases}
\end{align}

In this formula the summands $\mu_i= b_i\gamma_{i}^{^\circ M^{(1)}}+  (c(i,n)+d)\delta_i \in X^*(T_i)$ and 
$- \frac{n-1}{2}+d\in \Z.$   The sum of positive roots  in the $i$-th factor is $2\rho_i= \alpha_i=  2\gamma_{i}^{^\circ M^{(1)}}.$ We take the character $\mu_i+2\rho_i =( b_i+2)\gamma_{i}^{^\circ M^{(1)}}+  (c(i,n)+d)\delta_i ,$  and  apply the constructions from (\ref{Glzwei}) to it  and construct the module 
$\fI_{B_i}^{M_i}(\mu_i+2\rho_i). $  We know that this module sits in an exact sequence 
\begin{align}
0 \to \cD_{\mu_i} \to  \fI_{B_i}^{M_i}(\mu_i+2\rho_i)  \to  \M_{\mu_i,\Z} \to 0
\end{align}
We put $\mu=\wu\cdot \lambda ,$ this is a character on the maximal torus $T$ and we can define 
the induced module
$
\fI_{^\circ B}^{^\circ M} (\mu +2^\circ \rho). 
$
It is clear that this module is a tensor product 
\begin{align}
\fI_{^\circ B}^{^\circ M} (\mu +2{ ^\circ \rho}) =\bigotimes_{i: i\text{odd}}  \fI_{B_i}^{M_i}(\mu_i+2\rho_i) (\otimes \Z(- \frac{n-1}{2}+d))
\end{align}
where the last factor is only there if $n$ is odd. Then this module contains the submodule
\begin{align}
\cD_\mu=\bigotimes_{i: i\text{odd}}  \cD_{\mu_i}  (\otimes \Z(- \frac{n-1}{2}+d)) \into \bigotimes_{i: i\text{odd}}  \fI_{B_i}^{M_i}(\mu_i+2\rho_i) (\otimes \Z(- \frac{n-1}{2}+d))
\end{align}

  We know that   $\cD_{\mu_i}\otimes \Zit  $ decomposes into the two submodules 
 \begin{align}
  \cD_{\mu_i}\otimes \Zit  =\cD^+_{\mu_i}\otimes \Zit \oplus \cD^-_{\mu_i}\otimes \Zit ,
\end{align}
   hence for any choice of signs  we define the  module
   \begin{align*}
\cD_{\mu}^{\ul \epsilon}= \bigotimes_{i: i\text{odd}}  \cD^{\epsilon_i}_{\mu_i}  (\otimes \Z(- \frac{n-1}{2}+d)) 
\end{align*}
 and the induced module
   
     \begin{align}
      \Dl^{\ul{\epsilon}}=\fI_{^\circ P}^G \cD_{\mu}^{\ul \epsilon}.
\end{align}
  
   The  module $\   \cD_\mu^{\ul{\epsilon}}$ has as 
   minimal $\KMc$ type the character
  $$(\ul {\epsilon}, \mu+2\rho)= \sum_{i: i\text{odd}}  \epsilon_i(b_i+2)e_i   -(2\gamma_2+2\gamma_4+\dots+2\gamma_{m-1}  +\frac{3}{2}  \gamma_{m+1})
 + d\delta$$
    The    $\Zit$ eigenmodule  for this character is generated by 
    \begin{align}
 \cD_\mu^{\ul{\epsilon}}(\ul {\epsilon}, \mu+2\rho)=\Zit \bigotimes_{i:i\text{odd}} \Phi^{(i)}_{d,\epsilon_i(b_i+2)}
\end{align}
so it comes with a canonical generator, let us denote this generator by 
\begin{align}\label{gen}
\bigotimes_{i:i\text{odd}} \Phi^{(i)}_{d,\epsilon_i(b_i+2)}=\Phi_{\mu,\ul{\epsilon}}.
\end{align}

  The Weyl group $W_c$ contains a subgroup $S_m$ which acts by sign changes on the generators, i.e.
  $e_i\mapsto \pm e_i.$ Hence $S_m$ acts on the set of characters  $(\ul {\epsilon}, \mu). $  
   It acts transitively  on this set if $n$ is odd
    (See [Bou]) and if $n$ is even then we see easily that $(\ul {\epsilon}, \mu+2\rho) $ and   $(\ul {\epsilon^\prime}, \mu+2\rho)$
    are equivalent under the Weyl group $W_c$ if and only if $\prod_{i: i\text{odd}} \epsilon_i=\prod_{i: i\text{odd}} \epsilon_{i}^\prime.$
    
    Any of our characters $(\ul {\epsilon}, \mu+2\rho)$ can be conjugated by an element in $W_c$  into a dominant weight with respect to
    $B_c,$ and an easy computation shows  that these dominant weights are
    \begin{align}
\begin{cases}  ({\ul 1}, \mu+2\rho) = \sum_{i: i\text{odd}}  (b_i+2) e_i&  \text { if } n \text{ odd }\cr
 (\epsilon, \mu+2\rho) = \sum_{i: i\text{odd},i<m}   (b_i+2)e_i + \epsilon(b_m+2)e_m &  \text { if } n \text{ even }
 \end{cases}
\end{align}
where in the second case $\epsilon$ assumes the values $+1,-1.$ These weights are indeed dominant
because $b_{m-2}> b_m.$ For $\epsilon=\pm 1$ we define 
\begin{align}
\Dl^\epsilon = \Dl^{(1,1,\dots,\epsilon)}
\end{align}

\medskip
 We have the following
\begin{prop} 
The $(\fg_\Z, \Ke)$ modules $   \Dl^{\ul{\epsilon}}  $ are irreducible. Two such modules 
are isomorphic  if and only if
$(\ul {\epsilon}, \mu)$ and  $(\ul {\epsilon}^\prime, \mu)$ are conjugate under the Weyl group $W_c.$
The module $   \Dl^{ {\epsilon}}  $  contains a minimal $\Ke$ type which has highest weight 
$$\mu_c(\epsilon, \lambda)=  \sum_{i: i\text{odd},i<m}   (b_i+2)e_i + \epsilon(b_m+2)e_m   $$
where $\epsilon=1$ if $n$ is odd and   $\epsilon=\pm 1$ if $n$ is even. This  minimal $\Ke$
type occurs with multiplicity one.
\end{prop}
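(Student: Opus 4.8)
The whole statement is controlled by the $\Ke$-module structure of $\Dl^{\ul\epsilon}$, and I would extract that structure by Frobenius reciprocity. By construction $\Dl^{\ul\epsilon}=\fI_{\mP}^G\cD_\mu^{\ul\epsilon}=(\cD_\mu^{\ul\epsilon}\otimes A(\Ke))^{\KMc}$, so as a $\Ke$-module it is obtained from $\cD_\mu^{\ul\epsilon}$ by (functions-)induction from a subgroup of $\Ke$ whose identity component is the maximal torus $T_c=\prod_{i\text{ odd}}\Ke_i$. Hence, writing $L$ for the fraction field of the base ring and $W_\tau$ for the irreducible $\Ke\otimes L$-module of $B_c$-highest weight $\tau$, Frobenius reciprocity yields the bound $\dim_L\Hom_{\Ke}(W_\tau,\Dl^{\ul\epsilon}\otimes L)\le\sum_\nu m_{W_\tau}(\nu)\,m_{\cD_\mu^{\ul\epsilon}}(\nu)$, the sum running over $T_c$-weights $\nu$. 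The first input is the $T_c$-weight spectrum of $\cD_\mu^{\ul\epsilon}$: by the $\Gl_2$-computations of Section~\ref{Glzwei} (formulae (\ref{discseries}), (\ref{diff})) the $i$-th factor $\cD_{\mu_i}^{\epsilon_i}$ carries the $T_c$-weight $\nu_ie_i$ with multiplicity one exactly when $\epsilon_i\nu_i\ge b_i+2$ and $\nu_i\equiv b_i\bmod 2$, and with multiplicity zero otherwise, the abelian part contributing only the fixed central character; thus $m_{\cD_\mu^{\ul\epsilon}}(\nu)\in\{0,1\}$, equal to one precisely when $\epsilon_i\nu_i\ge b_i+2$ for every odd $i$. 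The second input is that the cuspidal parameters satisfy $b_1>b_3>\dots>b_m\ge 0$, which one reads off from their explicit formula; this strict monotonicity is the combinatorial crux.

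Granting these, minimality and multiplicity one of the minimal $\Ke$-type follow. If $W_\tau$ occurs in $\Dl^{\ul\epsilon}$ it shares a $T_c$-weight $\nu$ with $\cD_\mu^{\ul\epsilon}$, so $|\nu_i|\ge b_i+2$ for all $i$; since the numbers $b_i+2$ are strictly decreasing, the dominant rearrangement $\nu^+$ dominates coordinatewise the weight $\mu_c(\ul\epsilon,\lambda)$ obtained by conjugating $(\ul\epsilon,\mu+2\rho)$ into the $B_c$-dominant chamber, whence $\tau\succeq\nu^+\succeq\mu_c(\ul\epsilon,\lambda)$; so $\mu_c(\ul\epsilon,\lambda)$ is the unique minimal $\Ke$-type. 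For its multiplicity, any common $T_c$-weight $\nu$ of $W_{\mu_c(\ul\epsilon,\lambda)}$ and $\cD_\mu^{\ul\epsilon}$ lies in the convex hull of $W_c\cdot\mu_c(\ul\epsilon,\lambda)$, hence satisfies $\sum_i|\nu_i|\le\sum_i(b_i+2)$, while $|\nu_i|\ge b_i+2$; equality forces $|\nu_i|=b_i+2$, and then the sign conditions force $\nu=(\ul\epsilon,\mu+2\rho)$, which is an extreme weight of $W_{\mu_c(\ul\epsilon,\lambda)}$ and so has multiplicity one there (and multiplicity one in $\cD_\mu^{\ul\epsilon}$). The Frobenius bound thus gives multiplicity $\le 1$, and the explicit generator $\Phi_{\mu,\ul\epsilon}$ of (\ref{gen}) furnishes a nonzero $\KMc$-homomorphism, hence a nonzero $\Ke$-embedding, so the multiplicity is exactly one. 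Specialising to $\ul\epsilon=(1,\dots,1,\epsilon)$, where $(\ul\epsilon,\mu+2\rho)$ is already $B_c$-dominant because $b_{m-2}>b_m$, gives the displayed highest weight $\mu_c(\epsilon,\lambda)$ and proves the last assertion.

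For irreducibility I would base change to $\C$: $\Dl^{\ul\epsilon}\otimes\C$ is the tempered cohomological module $\D_\lambda$ of \cite{ha-ra}, 3.1.4, which is irreducible by the classification of cohomological (Speh) representations (\cite{Speh}, \cite{Mog}); a nonzero proper $(\fg_\Z,\Ke)$-submodule of $\Dl^{\ul\epsilon}\otimes L$ would, after the flat base change $\,\otimes_L\C$, produce a nonzero proper $(\fg,\Ke)$-submodule of $\D_\lambda$, which is impossible. A self-contained alternative uses the previous paragraph: $\Dl^{\ul\epsilon}$ is generated over $\fU(\fg_\Z)$ by the image of $\Phi_{\mu,\ul\epsilon}$, and a nonzero $(\fg_\Z,\Ke)$-submodule of $\Dl^{\ul\epsilon}\otimes L$, being $\Ke$-semisimple with every $\Ke$-type dominating $\mu_c(\ul\epsilon,\lambda)$, must contain the unique minimal $\Ke$-type, hence a vector proportional to $\Phi_{\mu,\ul\epsilon}$, and therefore is everything.

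Finally the classification of isomorphism classes. If $\Dl^{\ul\epsilon}\cong\Dl^{\ul\epsilon'}$ then the minimal $\Ke$-types coincide, $\mu_c(\ul\epsilon,\lambda)=\mu_c(\ul\epsilon',\lambda)$; since $\mu_c(\,\cdot\,,\lambda)$ is the $B_c$-dominant representative of $(\,\cdot\,,\mu+2\rho)$ and the entries $b_i+2$ are pairwise distinct, $(\ul\epsilon,\mu+2\rho)$ and $(\ul\epsilon',\mu+2\rho)$, equivalently $(\ul\epsilon,\mu)$ and $(\ul\epsilon',\mu)$, lie in one $W_c$-orbit. Conversely, given $w\in W_c$ carrying $(\ul\epsilon,\mu+2\rho)$ to $(\ul\epsilon',\mu+2\rho)$, I would lift $w$ to $\dot w\in N_{\Ke}(T_c)(\Zit)\subset\Ke(\Zit)$: a permutation of the $\Gl_2$-blocks is realised by the corresponding block-permutation matrix, which is an even coordinate permutation and hence lies in $\SO(n)$, and which conjugates $\mP$ to a parabolic of the same type with reordered blocks; a product of two sign changes $e_i,e_j\mapsto -e_i,-e_j$ is realised by $\bpm 1&0\cr 0&-1\epm$ in the $i$-th and the $j$-th block, a matrix of determinant $+1$, which simultaneously swaps $\cD^+_{\mu_i}\leftrightarrow\cD^-_{\mu_i}$ and $\cD^+_{\mu_j}\leftrightarrow\cD^-_{\mu_j}$; for $n$ odd the extra $\Gm$-factor supplies the remaining single sign changes, so that all of $W_c$ is realised in $\Ke(\Zit)$. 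Conjugation by $\dot w$ then intertwines $\fI_{\mP}^G\cD_\mu^{\ul\epsilon}$ with $\fI_{\mP}^G\cD_\mu^{\ul\epsilon'}$, giving the isomorphism. The step I expect to be the genuine obstacle is the combined minimality and multiplicity-one statement for the minimal $\Ke$-type: it requires precise control of which weights of the $\SO(n)$-type irreducible $W_{\mu_c(\ul\epsilon,\lambda)}$ survive restriction to $T_c$ and coincide with the extreme weights of $\cD_\mu^{\ul\epsilon}$, and this is exactly where the strict inequalities $b_1>b_3>\dots>b_m\ge 0$ are indispensable; once the complex irreducibility result is granted, the remaining content is this weight bookkeeping together with the explicit realisation of $W_c$ inside $\Ke(\Zit)$.
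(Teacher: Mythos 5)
Your proof follows essentially the same route as the paper's: irreducibility by base change to $\C$ and the results of Speh and M\oe glin, the minimal $\Ke$-type and its multiplicity one via Frobenius reciprocity for the induction from $\KMc$ combined with the strict decrease $b_1>b_3>\dots>b_m\ge 0$ of the cuspidal parameters (the paper phrases this as $\mu_c(\ul k)=\mu_c(\ul 0)+\sum_i m_i\alpha_{i,c}$ with $m_i\ge 0$), and the isomorphisms by lifting elements of $W_c$ to integral points normalizing $T_c$; you additionally make explicit the ``only if'' direction of the classification, which the paper leaves implicit. One caveat: your optional ``self-contained alternative'' for irreducibility is not valid as stated, because a nonzero proper submodule all of whose $\Ke$-types dominate $\mu_c(\ul\epsilon,\lambda)$ need not actually contain the minimal $\Ke$-type; since your primary argument is the reduction to \cite{Speh}, \cite{Mog}, exactly as in the paper, this does not affect the proof.
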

\begin{proof} 
For the irreducibility we tensor by $\C$ and  refer to \cite{Mog} and \cite{Speh}.
Any element in the  Weyl group $W_c$  can be represented by an element
$w\in G(\Z)$   which normalizes $T_c=\KK^{^\circ M}.$ Then 
 the multiplication from the left  by $w$ induces an isomorphism  
\begin{align}
 (  \cD_\mu^{\ul{\epsilon}}\otimes A(\Ke))^{\KK^{^\circ M}} \iso   ( \cD_{w\mu}^{w\ul{\epsilon}}\otimes A(\Ke))^{\KK^{^\circ M}}
\end{align}
and this is an isomorphism of $(\fg_\Z,\Ke)$ modules. We prove the assertion concerning the $\Ke$-types.
We have a decomposition of $\cD_\mu^{\epsilon} $ into $T_c$-types
\begin{align}\label{decoD}
 \cD_\mu^{ {\epsilon}}= \bigoplus_{k_1\geq0,\dots, k_m\geq 0} \Zit (( b_1 +2+2k_1)e_1 +\dots + \epsilon(b_m+2+2k_m)e_m)
\end{align}
The character $(b_1 +2+2k_1)e_1+ (b_3+2+ 2k_3 )e_3 +\dots + \epsilon(b_m+2 +2k_m)e_m $ may not 
be in the positive chamber ($k_m$ may be too large) but we can conjugate it to
$\mu_c(\ul k)  $ in the positive chamber. For this character it is easy to see that 
\begin{align}
\mu_c(\ul k)= 
(b_1 +2)  e_1+ ( b_3 +2)e_3 +\dots + \epsilon (b_m +2)e_m +\sum_i m_i\alpha_{i,c}  
\end{align}
where the $m_i\geq0.$ The highest weight $\mu_c(\epsilon, \lambda)=\mu_c(\ul 0).$

Now we have the classical formula that 

\begin{align}
 ( \Zit(\mu_c(\ul k))\otimes A(\Ke))^{\KK^M}\otimes \Q  =  \bigoplus_{\vartheta}  A(\Ke\otimes\Zit)(\mu_c(\ul k),\vartheta)\otimes \Q
\end{align}
where $\vartheta$ runs over the isomorphism classes of irreducible $\Ke\otimes\Zit$ modules and
where $A(\Ke\otimes\Zit)(\mu_c(\ul k))=\{f \vert R_{(t^{-1},e)}f =\mu_c(\ul k)) f \text{ for all } t\in T_c(R_1)\}.$ 
Then it is well known that the multiplicity of $\vartheta$ in $A(\Ke\otimes\Zit)(\mu_c(\ul k))$
is equal to the multiplicity of $\mu_c(\ul k)$ in $\vartheta.$  
We get that the  representation $\vartheta_{\mu_c(\epsilon, \lambda)} $ occurs with multiplicity one.
(We notice that our argument also implies that for a given $\vartheta,$ the number of those
$\ul k$ for which  $\mu_c(\ul k)$ occurs in $\vartheta$ with positive multiplicity, is finite.  The settles
the condition (2) in the definition of Harish-Chandra modules  in section \ref{general}
for $\fI_P^G \ \cD_\mu^{ {\epsilon}} $ but this argument works in the general case too.)

\end{proof}

   \subsection{The cohomology $H^\pkt(\fg_\Z,\Ke,  \Dl^\epsilon\otimes  \M_{\lambda,\Zit}) $} \label{sec:HGK}

   We define as usual the $(\fg_\Z,\Ke)$-cohomology  as the cohomology of the complex 
   
  \begin{align}\label{GKganz}
  \Hom_{\Ke}( \Lambda^\bullet(\fg_\Z/\fk_\Z),  \Dl^\epsilon \otimes \M_{\lambda,\Z})
  \end{align}
  
  If we tensor by the complex numbers then we know that $\Dl^\epsilon\otimes \C$ is unitary and  since 
  $\Ml\otimes \C$ is dual to its conjugate, it follows that all the differentials in the above complex 
  are trivial, i.e the complex is equal to its cohomology. 
  
  \bigskip
   We apply the Delorme method (or Frobenius reciprocity).  Let $\cm_\Z$ be the Lie algebra 
   of $^\circ M ,$ let  $ \cme _\Z$  be the Lie algebra of $^\circ M^{(1)}.$  Let $\fu_\Z$ be the Lie-algebra 
   of the unipotent radical of $^\circ P$  and finally let $\fc_\Z$
   be the Lie algebra of $C_{^\circ M}.$ Then
   \begin{align}
   \fg_\Zz/\fk_\Zz  =  \cme_\Zz/ \ck_\Zz\oplus \fc_\Zz\oplus \fu_\Zz 
   \end{align}
   where now the right hand side is a $(^\circ \fm, \KK^{^\circ M})$ -module. The group scheme
   $ \KK^{^\circ M} $  acts by the adjoint action. It acts trivially on $\fc_\Z,$  and the adjoint action  of $ \KK^{^\circ M} $ on $\fu_\Z$
   extends to the adjoint action of $^\circ M.$ (Remember that $\Mc$ is a subgroup of $^\circ P$.)
 We get an isomorphism of complexes 
   \begin{align}\label{Del1}
     \Hom_{\Ke}( \Lambda^\bullet(\fg_\Z/\fk_\Z),  \Dl^\epsilon \otimes \M_{\lambda,\Z})  = \\ \Hom_{\KK^{\Mc}} (\Lambda^\bullet( \cme_\Z/ \ck_\Z) ,  \cD_\mu^\epsilon  \otimes
   \Hom(  \Lambda^\bullet(\fu_\Z) , \M_{\lambda,\Z})\otimes \Lambda^\pkt(\fc_\Z)
   \end{align}
   
In the following we concentrate on the $\Lambda^0(\fc_\Z)$ component.   We claim that the $\Zit$ module
   \begin{align}\label{Hu}
 \Hom_{\KK^{\Mc}} (\Lambda^\bullet( \cme_{\Zit}/ \ck_{\Zit}) ,  \cD_\mu^\epsilon  \otimes
   \Hom(  \Lambda^\bullet(\fu_{\Zit}) , \M_{\lambda,\Zit}))
\end{align}
is free of rank one.  We will  be more precise: We decompose the three $\KK^{\Mc}$ -modules 
$\Lambda^\bullet( \cme_{\Zit}/ \ck_{\Zit}),  \cD_\mu^\epsilon , \text{ and }
   \Hom(  \Lambda^\bullet(\fu_{\Zit}) , \M_{\lambda,\Zit})$ into eigenspaces with respect to characters in $  X^*(\KK^{\Mc}\otimes \Zit)$
   and show that there is exactly one  triple of characters which contributes non trivially to the $\Hom_{\KMc}$, i.e
   which satisfies $\eta_{\fm}=\eta_{\cD}+\eta_{\fu}.$

\bigskip
  The module $\Lambda^{^\circ r}(\cme_{\Zit}/ \ck_{\Zit})$
contains the submodule
\begin{align}
\bigoplus_{\ul \epsilon} \Zit P_1^{\epsilon_1}\wedge P_3^{ \epsilon_3}\wedge\dots \wedge P_m^{\epsilon_m}
\end{align}
and on the individual summand our torus $T_c\cdot C_{\Mc}$     acts by characters
$
\nu(\ul \epsilon )= 2 (\epsilon_1 e_1+\dots+ \epsilon_m e_m).C_{\Mc}
$ 
We choose for $\ul \epsilon$ the value $\ul \epsilon_0=(+,+,\dots,\epsilon) $ hence $\Lambda^{^\circ r}(\cme_{\Zit}/ \ck_{\Zit})$
contains the direct summand 
\begin{align}
\Lambda^{^\circ r}(\cme_{\Zit}/ \ck_{\Zit})(\nu(\ul \epsilon_0))=\Zit(\nu(\ul \epsilon_0))
\end{align}
The character $\nu(\ul{\epsilon}_0)$ will be our $\eta_\fm.$  The action of $C_{^\circ M}$   on $\Lambda^{^\circ r}(\cme_{\Zit}/ \ck_{\Zit})$  is trivial.

\bigskip
The module $ \cD_\mu^\epsilon $ contains the submodule $ \cD_\mu^\epsilon(\mu+2\rho,\ul{\epsilon}_0 )$
with multiplicity one, hence 
\begin{align}
 \cD_\mu^\epsilon(\mu+2\rho,\ul{\epsilon}_0 )=\Zit \Phi_{\mu, \epsilon}\subset  \cD_\mu^\epsilon 
\end{align}
The center $C_{\Mc}$ acts  on $ \cD_\mu^\epsilon(\mu+2\rho,\ul{\epsilon}_0 )$ by the character 
\begin{align}\label{cent1}
-\zeta(\mu)= (2\gamma_2+2\gamma_4+\dots+2\gamma_{m-1}  +\frac{3}{2}  \gamma_{m+1})
- d\delta
\end{align}

Finally  investigate the structure of  $ \Hom(  \Lambda^\bullet(\fu_{\Zit}) ,\M_{\lambda, \Zit})).$   The conjugation by the 
matrices $c_{2,i}$ in (\ref{conj}) or better conjugation by  the product $\tilde{c}=\prod_{i:i\text{odd}} c_{2,i}$ provides an identification 
\begin{align}\label{ct}
\tilde{c}:X^*(\KK^{\Mc}\otimes \Zit) \to X^*(T).
\end{align}
Note that $\tilde {c}( e_i) =\gamma_{i}^{^\circ M^{(1)}} $ and for even indices $i$ we have $\tilde c( \gamma_i)=\gamma_i.$

\bigskip
This suggests that we consider  $ \Hom(  \Lambda^\bullet(\fu_{\Z } ,\M_{\lambda, \Z }))$ as a module for $\Mc$
and we even restrict our attention to the action of the center $C_{\Mc}.$ We have the following proposition
which must be already in  \cite{Ko}.
\begin{prop}
The character $ \zeta(\mu)$ occurs only in degree $l(\wu)$ and  the eigenspace
 $$
\BH^{l(\wu)} (\fu_\Z,\MlZ)=\Hom (\Lambda^{l(\wu)}(\fu_\Z ), \M_{\lambda,\Z})(\zeta(\mu))
 $$
 is irreducible with highest weight $w_{\rm un}(\lambda+\rho)-\rho.$ The homomorphism
 $$
 \Hom (\Lambda^{l(\wu)}(\fu_\Z) , \M_{\lambda,\Z})(\zeta(\mu))  \to H^{l(\wu)}(\fu_\Z,\M_{\lambda,\Z})
 $$
 is an isomorphism. 
\end{prop}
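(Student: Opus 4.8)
The statement is Kostant's theorem (\cite{Ko}) together with an integral refinement, so the plan is to settle it rationally by citing Kostant, to identify the relevant central character combinatorially, and then to descend to $\Z$.

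Over $\Q$, Kostant's theorem gives $H^q(\fu_\Q,\M_{\lambda,\Q})=\bigoplus_{w\in W^{\mP},\ \ell(w)=q} V^{\Mc}_{w(\lambda+\rho)-\rho}$, the sum running over the minimal length representatives of $W/W_{\Mc}$, the summands being $\Mc$-irreducible with pairwise distinct highest weights, hence with pairwise distinct restrictions to the split torus $C_{\Mc}$. So I would first check that among these representatives the central character $\zeta(\mu)$ is attained only for $w=\wu$, and that the $C_{\Mc}$-weight $\zeta(\mu)$ occurs in $\Hom(\Lambda^q(\fu_\Z),\M_{\lambda,\Z})$ only for $q=l(\wu)$. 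Both are finite combinatorial statements for $\Gl_n$: from the explicit $\wu^{-1}$ recorded above one reads off that the $\delta_i$-components of $w(\lambda+\rho)-\rho$ single out $w=\wu$, and a weight count using the roots of $\fu$ and the weights of $\M_{\lambda,\Q}$ (in terms of the parameters $b_i$, $c(i,n)$ already computed) forces the exterior degree to be $l(\wu)$; the two parities of $n$ are handled by the displayed formulas for $b_i$ and $c(i,n)$.

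Granting this, the descent to $\Z$ is formal. Kempf vanishing kills the higher cohomology of $\cL_{\lambda^-}$ on the flag scheme $(B\backslash G)/\Z$, so $\M_{\lambda,\Z}=H^0((B\backslash G)/\Z,\cL_{\lambda^-})$ is $\Z$-free and commutes with base change, and $\fu_\Z$ is $\Z$-free; hence $\Hom(\Lambda^\bullet(\fu_\Z),\M_{\lambda,\Z})$ is a complex of finitely generated free $\Z$-modules carrying an algebraic action of $\Mc$, in particular of the split torus $C_{\Mc}$, and the $C_{\Mc}$-action splits it into a direct sum of free subcomplexes indexed by $X^*(C_{\Mc})$, compatibly with $-\otimes\Q$. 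By the combinatorial point the $\zeta(\mu)$-summand is concentrated in degree $l(\wu)$, hence equals its own cohomology; so $\Hom(\Lambda^{l(\wu)}(\fu_\Z),\M_{\lambda,\Z})(\zeta(\mu))$ maps isomorphically onto $H^{l(\wu)}(\fu_\Z,\M_{\lambda,\Z})$, its rationalization is $V^{\Mc}_{\wu(\lambda+\rho)-\rho}$, and $\BH^{l(\wu)}(\fu_\Z,\M_{\lambda,\Z})$ is a free $\Z$-module of rank $\dim V^{\Mc}_{\wu(\lambda+\rho)-\rho}$ carrying a canonical unimodular highest weight line of weight $\wu(\lambda+\rho)-\rho$, as needed for the normalisations later. (``Irreducible'' is meant in this sense: over a small-characteristic field the $\Mc$-module need not be simple, but this $\Z$-module structure is all that is used.)

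The step I expect to be the main obstacle is the combinatorial claim that $\zeta(\mu)$ occurs among the cochains in the single exterior degree $l(\wu)$: this is not visible from a crude weight or length estimate, and one has to exploit the precise extremality of $\zeta(\mu)$ relative to the particular representative $\wu$ — through the shape of its inversion set and the self-duality $a_i=a_{n-i}$ of $\lambda$ — while carrying the even/odd $n$ bookkeeping along.
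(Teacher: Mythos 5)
Your proposal follows essentially the same route as the paper: both decompose the cochain complex $\Hom(\Lambda^\bullet(\fu_\Z),\M_{\lambda,\Z})$ into eigenspaces under the split central torus $C_{\Mc}$ and reduce everything to the combinatorial fact that the character $\zeta(\mu)$ occurs only in exterior degree $l(\wu)$, so that the eigencomplex is concentrated in a single degree and therefore coincides with its cohomology. The only minor divergence is that you derive irreducibility from Kostant's theorem after establishing concentration, whereas the paper argues it directly inside the cochain space by matching the $T$-weights of the $\zeta(\mu)$-eigenspace with the multiplicity-one weights of the submodule generated by $e^\vee_{\Delta^+(\wu)}\otimes \wu x_\lambda$ --- and the decisive combinatorial verification you flag as the main obstacle is likewise only asserted in the paper as ``a simple computation.''
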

\begin{proof}
The Lie algebra $\fu_\Z $ has the basis $e_\beta,$ where $\beta\in  \Delta^+\setminus\{\alpha_1,\alpha_3,\dots,\alpha_m\}. $ Let us denote by $e_\beta^\vee$ the dual basis.  For any subset $J=\{\beta_1,\beta_2,\dots, \beta_s\}\subset   \Delta_{^\circ P}^+$  we define
$e_J^\vee = e^\vee_{\beta_1}\wedge e^\vee_{\beta_2}\wedge\dots e^\vee_{\beta_s}.$   The element 
$e_J^\vee$ is an eigenvector for the standard maximal torus $T\subset{ \Mc},$ the eigenvalue is the character 
$\chi_J=-\sum \beta_i.$
For any Kostant representative $w\in W^{^\circ P} $ we define the set   $\Delta^+(w)=\{\alpha \vert w^{-1} \ag<0 \}.$ 
Then we know that the restriction of $\wu(\lambda+\rho)-\rho =J_{\Delta^+(w)} $ to $C_{^\circ M}$ is 
$\zeta(\mu).$   The weight $ J_{\Delta^+(w)} $ is the highest weight of an irreducible $\Mc$ submodule  $\cN$
in $\Hom (\Lambda^{l(\wu)}(\fu_\Z) , \M_{\lambda,\Z})(\zeta(\mu))$  and the weight subspaces
in $\cN$ are of multiplicity one and of the form $\chi_{J^\prime}.$ Now a simple computation shows
that a subset $J_1\subset  \Delta_{^\circ P}^+ $ for which the restriction of $\chi_{J_1} $ to $C_{^\circ M}$ is equal
to $\zeta(\mu)$ must be on of the $\chi_{J^\prime}$ occurring in  $\cN$ and hence it follows that
$\Hom (\Lambda^{l(\wu)}(\fu_\Z) , \M_{\lambda,\Z})(\zeta(\mu))$ is irreducible.
\end{proof}

This implies that 
 \begin{align}\label{Hodge}
\Hom_{\KK^{\Mc}} (\Lambda^\bullet( \cme_{\Zit}/ \ck_{\Zit}) ,  \cD_\mu^\epsilon  \otimes
   \BH^{l(\wu)}(\fu_\Z, \M_{\lambda,\Z})) =\cr \Hom_{\KK^{\Mc}} (\Lambda^\bullet( \cme_{\Zit}/ \ck_{\Zit}) ,  \cD_\mu^\epsilon  \otimes
  \Hom(\Lambda^\pkt(\fu_\Z/\fk_Z), \M_{\lambda,\Z}))
\end{align}
 If we choose a generator $x_\lambda$ of the highest weight module $\MlZ (\lambda)$ then 
  \begin{align}
 \xi(\wu)\cdot\lambda)=e^\vee_{\Delta^+(\wu)}\otimes \wu x_\lambda \in  \BH^{l(\wu)}(\fu_\Z, \M_{\lambda,\Z}))
\end{align}
is a generator of the highest weight module $  \BH^{l(\wu)}(\fu_\Z, \M_{\lambda,\Z}))(\wu\cdot \lambda)$ it is actually unique up to a sign. We can modify our Borel subgroup $\bar{B}\subset{ \Mc}$ by flipping
into the opposite in some  of the factors. Then the highest weight with respect to such a Borel subgroup
will be 
\begin{align}
\lambda (\wu,\ul\epsilon)= \sum_{i: i \hbox{ odd}}\epsilon_i b_i\gamma_{i}^{^\circ M^{(1)}}   -\zeta(\mu)
\end{align}
where of course again $\epsilon_i=\pm 1$ and the indices $i$ with $\epsilon_i=-1$ tell us where we flipped the Borel subgroup.
To such a weight we have a generating weight vector  $ \xi(\wu\cdot\lambda,\ul \epsilon).$ 
Let us call these weight vectors $ \xi(\wu\cdot\lambda, \epsilon)$ the extremal weight vectors.

We replace the split torus  $T$ by $\tKK^{\Mc}$, these two tori have $ T_{\rm split}=C_{\Mc}$ in common.   Then  we see that that we have   extremal weight 
   spaces 
\begin{align}
 \BH^{l(\wu)}
   (\fu_{\Zit}, \M_{\lambda,\Zit}))(\wu\cdot \lambda)(\tilde{ c}^{-1}( \lambda (\wu,\ul\epsilon))) =\Zit \tilde {c}^{-1}( \xi(\wu\cdot\lambda, \ul\epsilon))
\end{align}
 and on this weight space     the torus $\tKK^{\Mc}$ acts by the 
character 
$$
\tilde{ c}^{-1}( \lambda (\wu,-\ul\epsilon_0))= -b_1 e_1-b_3 e_3-\dots \epsilon_m b_me_m +\zeta(\mu).
$$
 
 \bigskip
$\cD_\mu^\epsilon$    contains the rank one submodule  $  \cD_\mu^\epsilon(\mu_c(\lambda,\epsilon))=\Zit \Phi_{\mu,\ul{\epsilon_0}} $
 and $\tKK^{\Mc}$  acts by the character 
  $$ 
  \mu_c(\epsilon,\lambda)= \sum_{i: i\text{odd},i<m}   (b_i+2)e_i + \epsilon(b_m+2)e_m -\zeta(\mu)
  $$
 and this is a minimal $\KMc$ type (see (\ref{decoD})). The sum of these two characters is 
  $
\nu(\ul\epsilon_0)= 2e_1+2e_2\dots+ \epsilon 2 e_m
 $
 and hence we see 
  \begin{align}\label{Hu1}\begin{matrix}
 \Hom_{\KK^{\Mc}} (\Lambda^\bullet( \cme_{\Zit}/ \ck_{\Zit}) ,  \cD_\mu^\epsilon  \otimes
   \Hom(  \Lambda^\bullet(\fu_{\Zit}) , \Ml))= \cr
   \Hom_{\KK^{\Mc}} (\Lambda^\bullet( \cme_{\Zit}/ \ck_{\Zit})(\nu(\ul\epsilon_0)) ,  \cD_\mu^\epsilon(  \mu_c(\epsilon,\lambda))  \otimes 
   \Hom(  \Lambda^\bullet(\fu_{\Zit}) , \Ml))(\tilde{ c}^{-1}( \lambda (\wu,-\ul\epsilon_0))
   \end{matrix}
\end{align}
Each of the modules in the argument on the right hand side is of rank one and we have chosen a generator for each of them.
Hence we see 
\begin{align}
 \Hom_{\KK^{\Mc}} (\Lambda^\bullet( \cme_{\Zit}/ \ck_{\Zit}) ,  \cD_\mu^\epsilon  \otimes
   \Hom(  \Lambda^\bullet(\fu_{\Zit}) , \Ml)) =\Zit \Omega(\lambda,\epsilon)
\end{align}
where $\Omega(\lambda,\epsilon) $ is the tensor product of the generators and $\epsilon=\pm 1.$
The generator sits in degree $b_n= \rn +l(\wu).$

If $n$ is odd then the choice of $\ul \epsilon$ is irrelevant, if $n$ is even  we get two irreducible $(\fg_\Z, \Ke)$ modules.
As we did in the case $G=\Gl_2$ we can enlarge the connected group scheme $\Ke$ to the larger
group scheme $\KK =\Ke\ltimes \{\eta\}$ where $\eta $ is  the diagonal matrix which has $1$ on the diagonal up the $(n-1)$-th entry and  $-1$ the $n$-th entry. Then $\Dl = \Dl^{(+1)}\oplus \Dl^{(-1)} $ is an irreducible  $(\fg_\Z, \KK)$ module over 
$\Zit,$ the element $\eta$ yields an isomorphism between the two summands. 

Since our weight $\lambda$ is essentially self dual, i.e. $a_i=a_{n-i}$ we have the constraint
$a_{\frac{n}{2}}\equiv 2d\mod 2.$ Then it is clear that 
\begin{align}
\eta \Omega(\lambda,\epsilon) =(-1)^{\frac{a_{\frac{n}{2}}-2d}{2}} \Omega(\lambda,-\epsilon)  
\end{align}
We form again the elements 
\begin{align}\label{genpm}
\omega^{(1)}_\lambda=  \Omega(\lambda,+1)+\Omega(\lambda,-1), \omega^{(2)}_\lambda=  \Omega(\lambda,+1)-\Omega(\lambda,-1)
\end{align}
and then these two elements are the generators for the $\pm $ eigenspaces under the action of $\eta.$
 We have the decomposition 
 \begin{align*}
 \begin{matrix}
H^\pkt(\fg_\Z, \Ke, \D_\lambda\otimes \M_{\lambda,\Zz}) =\cr H_+^\pkt(\fg_\Z, \Ke, \D_\lambda\otimes \M_{\lambda,\Zz} ) \oplus H_-^\pkt(\fg_\Z, \Ke, \D_\lambda\otimes \M_{\lambda,\Zz} )
\end{matrix}
\end{align*}
 and 
 \begin{align*}
H^\pkt(\fg_\Z, \KK, \D_\lambda\otimes \M_{\lambda,\Zz})\iso H_+^\pkt(\fg_\Z, \Ke, \D_\lambda\otimes \M_{\lambda,\Zz} ) .
\end{align*}
 \bigskip
In degree $\pkt=b_n$ the cohomology is the free $\Zz$-module  generated by a class $\omega^{(e)}_\lambda$ where $ e=1, 2$.

  \subsection{The arithmetic of the intertwining operator}
  
  In this section we apply the above considerations to study the arithmetic properties 
  of  an  intertwining operator between two induced modules. 
  
   We   start from the group scheme $\Gl_N/\Z,$ let $B$ be the standard Borel subgroup  and consider the standard parabolic subgroups $P\supset B (\text{ resp.}P^\prime\supset B$) with
  reductive quotient $M=\Gl_n\times \Gl_{n^\prime} =M_1\times M_2 $( \text{ resp. } $ M^\prime=\Gl_{n^\prime}\times \Gl_n.$)  
  Let $U_P \text{ resp.} U_{P^\prime}$ be the unipotent radicals. 
  Let $\pi=\{ \ag_1,\dots ,\ag_{N-1}\} \subset X^*(T)$ be the set of positive (with respect to $B$)   simple roots. We identify the set of simple roots with the set of indices $\{1,2,\dots,N-1\}.$ Let us denote  by $w_N^-$ the permutation which reverses the order by $i\mapsto i^\prime$, i.e. $i^\prime= N-i.$
   Then the positive simple roots for $M$ are $\pi_M=\{\ag_1,\dots, \ag_{n-1}\}\cup \{\ag_{n+1}, \dots, \ag_{N-1}\},$
   and accordingly we denote the system of simple roots of $M^\prime$ by  $\pi_{M^\prime}=w_N^-(\pi_M)=\{\ag_1,\dots, \ag_{n^\prime-1}\}\cup \{\ag_{n^\prime+1}, \dots, \ag_{N-1}\}.$ Let $\g_n \text{ resp. } \g_{n^\prime}$
   be the fundamental weight attached to the missing root $\ag_n\text{ resp. }\ag_{n^\prime}.$ If $\Delta^+_{U_P}
   \text{ resp. } \Delta^+_{U_{P^\prime}}$ are the positive roots occurring in these radicals, let
   $\rho_{U_P} , \rho_{U_{P^\prime}}$ be the half sums over these roots. Then
 \begin{align}
\rho_{U_P} =\frac{N}{2} \gamma_n, \; \rho_{U_{P^\prime}}=\frac{N}{2} \gamma_{n^\prime}
\end{align} 
   
  We choose  a highest weight $\lambda $ for $\Gl_N,$ let 
 $\M_\la$ be the resulting  $\Gl_N-$ module. 
 
  We pick a Kostant representative $w\in W^P,$ and we write
  \begin{align}
\begin{matrix}
  \tilde\mu=w(\lambda +\rho) -\rho = \sum_{i=1}^{n-1} a^\prime_i\gamma_i^{M } + \sum_{i=n+1}^{N-1} a^\prime_i\gamma_i^{M }  +a(w,\lambda) \gamma_n + d \det_N \cr
\cr
=  \sum_{i=1}^{n-1} a^\prime_i\gamma_i^{M }+ d(w,\lambda) \det_n+\sum_{i=n+1}^{N-1} a^\prime_i\gamma_i^{M }+ d^\prime(w,\lambda) \det_{n^\prime} =\mu_1+\mu_2
\end{matrix} 
   \end{align}
Here $\mu_1, (\text{ resp.} \mu_2)$ are highest weights on $M_1  \rs  M_2).$  In some situations it
is more convenient to look at
\begin{align}\label{wlplus}
\tilde\mu+\rho=w(\lambda +\rho)   = \sum_{i=1}^{n-1} b_i\gamma_i^{M } + \sum_{i=n+1}^{N-1} b_i\gamma_i^{M }  +b(w,\lambda) \gamma_n +  d  \det_N
\end{align}
Then we have the relations  $b_i=a^\prime_i+1$ and $b(w,\lambda)=a(w,\lambda)+\frac{N}{2}.$ We define 
the weight of $\tm$:
\begin{align}\label{weight}
\w(\tm) =  \w(\mu_1)+\w(\mu_2)= \sum_{i=1}^{n-1} b_i+ \sum_{i=n+1}^{N-1} b_i  
\end{align}

We make   assumptions on $w:$

\medskip
a)The  length $l(w)=1/2 \dim U_P$- this means that $w$ is balanced.

\medskip
b) 
  Both weights $\mu_1,\mu_2$ are essentially self dual.
  
  \medskip
  c) The weight $\tilde\mu$ is in the negative chamber, this means that $a(w,\lambda)\leq - \frac{N}{2}$
  or $b(w,\lambda)\leq 0.$

\medskip
We have the two longest Kostant representatives $w_P\in W^P \rs w_Q\in W^Q$) which send all the 
roots in $\Delta^+_{U_P}
  ( \text{ resp. } \Delta^+_{U_{P^\prime}})$  into negative roots. If $s_i\in W$ is the reflection attached
    to the simple root $\ag_i$  then  we can write any element $w\in W^P$ as a product of reflections
    $w=s_n s_i s_j\dots s_k.$ We can always complete this product of reflections to get the longest element
    (it always starts with $s_n$ and stops with $s_{n^\prime}$)
    \begin{align}\label{string}
 s_n s_i s_j\dots s_k s_{k^\prime} \dots s_{i^\prime} s_{n^\prime} =w_P
\end{align}
Then $w^\prime=s_{n^\prime} \dots s_{k^\prime}  \in W^Q$  and we
   get  a  one to one
   correspondence between $W^P$ and $W^Q$ which is defined by 
   \begin{align}
w=w_P w^\prime    \text{ or }  w^\prime= w_Q w
\end{align}
  (See 5.3.7) 
   We have $l(w)+l(w^\prime)=\dim U_P,$  since $w$ is balanced we see that $w^\prime$ is also balanced. For $w=e$ the identity element we get $w^\prime=w_Q.$
   
      A presentation of $w_P$ as in (\ref{string}) yields a sequence of roots in $\Delta_{U_P}^+$: The 
   first element in this sequence is $\beta_1=\alpha_n.$ Then we find a root $\beta_2\in \Delta_{U_P}^+$
   such that $s_n\beta_2=\alpha_i $ is a simple root. Then $s_i s_n$ sends the roots $\beta_1 \text { and }
   \beta_2$ into the set of negative roots and we find a root $\beta_3$ such that $s_n s_i \beta_3 $
   is a simple root $\alpha_\nu$ and $s_\nu$ is the next factor in $w_P=s_ns_i s_\nu \dots.$   To say this in different 
   words: We get an ordering $\{\beta_1,\beta_2, \dots, \beta_{d_U}\}=\Delta_{U_P}^+$ such that $x_k^{-1}$
   conjugates exactly   the   first  $k$ roots 
   $\{\beta_1,\dots, \beta_k\}$    into negative roots where
   \begin{align}
x_k=  s_ns_i \dots s_\mu  \text{ is the product of the first }  k \text{ factors in (\ref{string})}.
\end{align}
    
 We can write 
 \begin{align}
w_P=x_1 y_{d_U-1}= s_n(s_i\cdots s_{n^\prime})  =x_2y_2=(s_ns_i)(s_j\cdots s_{n^\prime} )= x_ky_k
\end{align}
  and the $x_k ,y_k$
  are corresponding elements.  We also define a function $r_w:  \{1,2,\dots, ,d_U\} \to \{1,2,\dots, N-1\}= \pi=\{\alpha_1,\dots,
  \alpha_{N-1}\}$ by the rule
  \begin{align}\label{rel}
x_{k }=x_{k-1} s_{r_w(k)}  \text{ or }  x_{k-1}   \alpha_{r_w(k)} =\beta_k
\end{align}

   For our element $w$ above the corresponding element
  $w^\prime $ has also length $\frac{1}{2} \dim_{U_P}$  and we have the corresponding formula 
  \begin{align}
  \begin{matrix}
\tilde{\mu}^\prime=w^\prime
(\lambda +\rho)   = \sum_{i=1}^{n^\prime-1} b_{i^\prime }\gamma_{i ^\prime}^{M^\prime} + \sum_{i= n^\prime+1}^{N-1} b_{i ^\prime}\gamma_{i ^\prime}^{M^\prime}  +b^\prime(w^\prime,\lambda) \gamma_{n^\prime} + d (w^\prime,\la) \det_N\cr
\cr
=  \sum_{i=n+1}^{N-1} b_{i ^\prime}\gamma_{i^\prime }^{M^\prime}+ d (w^\prime,\lambda) \det_{n^\prime}+ \sum_{i=n^\prime+1}^{N-1} b_{i^\prime }\gamma_{i^\prime }^{M^\prime} + d (w^\prime,\lambda) \det_{n } =\mu^\prime_1+\mu^\prime_2.
\end{matrix} 
   \end{align}
The formula tells us that the semi simple components $\mu^{\prime(1)}_1=\mu^{(1)}_{2},\mu^{\prime(1)}_2=\mu^{(1)}_{1}.$ Here we use our assumption that $\mu_i$ are essentially self dual.
   The coefficients of $\gamma_n,\gamma_{n^\prime}$ are related by 
   \begin{align}
 d (w,\lambda) + d(w^\prime,\lambda) =-N
\end{align}
                   
The weights $\mu_1,\mu_2 $ yield Harish-Chandra modules $\D_{\mu_1}( \text{ for } \Gl_{n})$ 
and $\D_{\mu_2}$ for $\Gl_{n^\prime}.$  (See section \ref{Dle} ) and hence  ${\tm}=\mu_1+\mu_2$ provides 
a Harish-Chandra module $\D_{\tm} $ for $M.$ By the same argument we
get a Harish-Chandra module $\D_{{\tmp} }$ for $M^\prime.$  These two $(\fm, \KK^M)$ (resp. $(\fm^\prime,\KK^{M^\prime})$ ) modules both have a minimal $\KK^M$ type $\mu_c(\epsilon,\tilde\mu)=
\mu_c(\epsilon_1,\mu_1)+\mu_c((\epsilon_2,\mu_2)$ (resp. $\KK^{M^\prime}$ type $\mu_c(\epsilon^\prime,\tilde\mu^\prime)).$
    
We consider  the $(\fg_\Z, \Ke)$  modules $   \fI_{ P}^G   \D_\tm \text{ and }     \fI_{  Q}^G   \D_{\tmp}. $   
Both these modules contain the irreducible module $\vartheta_{\mu_c(\epsilon,\tilde\mu)}$ as minimal $\Ke$
type and this $\Ke$-type has multiplicity one.

We take the base extensions to  $\C$ and twist them by a holomorphic variable.  We introduce the abbreviating
notation
$$ \fI_{ P}^G   \D_\tm\otimes \C (\V{\gamma_n}^z) =\fI_{ P}^G   \D_\tm\otimes (z)$$
Then we can write down
the usual intertwining operator
\begin{align}\label{Intz}
T^{w_P,\rm st}(z):    \fI_{ P}^G   \D_\tm\otimes \C (\V{\gamma_n}^z) \to   \fI_{  Q}^G   \D_{\tmp}\otimes \C
(\V{\gamma_{n^\prime}}^{-z} )
\end{align}

which is given by  the integral

 $$ \{ g\mapsto  f(g)\}\mapsto  \{g \mapsto    \int_{U_P(\R)}  f(w_Pug) d_\infty u\} $$ 
 where  $d_\infty u$ 
is the Haar measure obtained from the epinglage. This integral converges if $ \Re(z)>>0$. The action
of $\Ke(\R)$ on these modules is independent of $z$  especially it it clear that the above lowest $\Ke(\R)$-type
occurs in the deformed modules with multiplicity one.

\begin{satz}\label{maintheorem} 
i) This operator extends to a meromorphic operator in the entire $s$-plane and it is holomorphic 
at $z=0. $ 

ii)   At $z=0$  it is an isomorphism and modified by the factor $\frac{1}{\pi^{d_U/2}}$ it  is defined over $\Q $, i.e.
we get an isomorphism
\begin{align}
\frac{1}{\pi^{d_U/2}}T^{w_P,\rm st}(0):    \fI_{ P}^G   \D_{\tm}\otimes \Q  \iso    \fI_{  Q}^G   D_{\tmp}\otimes \Q 
\end{align}
 \end{satz}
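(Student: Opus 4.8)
The plan is to reduce the whole statement to the rank‑one $\Gl_2$ computations of Section~\ref{THC}, via the standard factorization of an intertwining operator attached to a long Weyl element into a product of rank‑one intertwining operators, one for each root of $U_P$. Concretely, I would first use a reduced expression $w_P=s_{r_w(1)}s_{r_w(2)}\cdots s_{r_w(d_U)}$ of the longest Kostant representative — this is exactly the datum packaged by the ordering $\{\beta_1,\dots,\beta_{d_U}\}$ of $\Delta^+_{U_P}$ and the function $r_w$ of~(\ref{rel})–(\ref{string}). The Haar measure $d_\infty u$ on $U_P(\R)$ then factors as an iterated integral over the one–dimensional root subgroups $U_{\beta_k}(\R)$, and correspondingly one gets
\[
T^{w_P,\rm st}(z)=T_{\beta_1}(z)\circ T_{\beta_2}(z)\circ\cdots\circ T_{\beta_{d_U}}(z),
\]
where $T_{\beta_k}(z)$ is the rank‑one intertwining operator attached to $H_{\beta_k}\cong\Sl_2$, acting between two induced modules whose inducing data differ by the simple reflection $s_{r_w(k)}$ after twisting through $x_{k-1}$. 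On each $\Sl_2$‑isotypic line $T_{\beta_k}(z)$ is literally one of the operators $T^{st}_\chi$ of Section~\ref{THC}, and its argument is an affine‑linear function $z_k=c_k(\tm)+z$ whose constant term is read off from the cuspidal parameters $b_i$ and from $b(w,\lambda)$ through~(\ref{wlplus}).

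Next I would evaluate the composition on the minimal $\Ke$‑type. Both $\fI_P^G\D_\tm$ and $\fI_Q^G\D_\tmp$ contain $\vartheta_{\mu_c(\epsilon,\tm)}$ with multiplicity one (as proved for $\fI_P^G\D^\epsilon_\mu$, and because $\mu_c(\epsilon,\tm)=\mu_c(\epsilon_1,\mu_1)+\mu_c(\epsilon_2,\mu_2)$), so by Schur's lemma $T^{w_P,\rm st}(z)$ acts on this line by a scalar. Formula~(\ref{Tst}), applied root by root to the canonical generator of the minimal type, identifies this scalar as a product of $d_U$ factors, each a ratio of products of linear terms in $z_k$ times $\Gamma(\tfrac12)\,\Gamma(\tfrac{z_k+\epsilon-1}{2})/\Gamma(\tfrac{z_k+\epsilon}{2})$. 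Now I would invoke the three hypotheses: assumption a) (balancedness), which forces the $\beta_k$ and the reduced word into the structured pattern of~(\ref{string}); assumption b) (self‑duality of $\mu_1,\mu_2$), which makes the $\Sl_2$‑data on the $P$‑ and $Q$‑sides dual so the $\Gamma$‑factors pair up; and assumption c) (the negative‑chamber condition $b(w,\lambda)\le 0$), which pushes every value $z_k|_{z=0}$ into the holomorphy locus of $T^{st}$ described in the Proposition on the poles of $T^{st}_\chi$. From this I would deduce: (i) each $T_{\beta_k}(0)$, and hence the composition, is holomorphic at $z=0$ — part~i); and (ii) using the same elementary $\Gamma$‑function identities as at the end of Sections~\ref{Glzwei} and~\ref{THC} (each root contributes exactly one $\Gamma(\tfrac12)=\sqrt{\pi}$, while the remaining half‑integer $\Gamma$‑values produce rational multiples of powers of $2$ that cancel the $\Gamma(\tfrac12)$'s appearing in denominators), the scalar on the minimal $\Ke$‑type equals $\pi^{d_U/2}\cdot q$ with $q\in\Q^\times$.

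For the rest of part~ii): since this scalar is nonzero, $T^{w_P,\rm st}(0)\neq 0$, and as $\fI_P^G\D_\tm\otimes\C$ and $\fI_Q^G\D_\tmp\otimes\C$ are irreducible in the range a)–c) (cf.\ \cite{Speh}, \cite{Mog}), a nonzero intertwining map is an isomorphism. For rationality one works throughout with the ${\bf c}$‑invariant $\Q$‑rational structures $\fI_P^G\D_\tm\otimes\Q$: over these, each rank‑one operator divided by $\sqrt{\pi}$ is already defined over $\Q$, in exactly the way $T^{st}_\chi$ was shown to be a rational multiple of $\pi$ times the algebraic operator $T^{alg}$ in Section~\ref{THC}. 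Hence $\tfrac{1}{\pi^{d_U/2}}T^{w_P,\rm st}(0)$ is a composition of $\Q$‑rational operators, so it is defined over $\Q$; equivalently, it acts on the minimal $\Ke$‑type by the rational scalar $q$, it is $\fg_\Q$‑equivariant, and $\fI_P^G\D_\tm\otimes\Q$ is generated over $\fU(\fg_\Q)$ by that minimal $\Ke$‑type.

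The main obstacle I expect is step~(ii): the precise bookkeeping of the $\Gamma$‑factors and of the affine arguments $z_k$, yielding simultaneously holomorphy at $z=0$, the exact power $\pi^{d_U/2}$, and a nonzero rational residue. The individual factors $T_{\beta_k}(z)$ will in general have poles at $z=0$ that must cancel against zeros of neighbouring factors, and controlling this requires using the balanced, self‑dual and negative‑chamber hypotheses in a coordinated way through the combinatorics of~(\ref{string})–(\ref{rel}); a secondary, more routine point is the verification that holomorphy and $\Q$‑rationality on the minimal $\Ke$‑type propagate to the whole module.
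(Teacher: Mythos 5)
Your overall strategy --- factor $T^{w_P,\rm st}$ into rank-one operators along the reduced word (\ref{string}), track the $\Gamma$-factors root by root, and read off the scalar on the multiplicity-one minimal $\Ke$-type --- is the same as the paper's, but there is a genuine gap at the decisive step. You first assert that the negative-chamber hypothesis c) places every rank-one argument $z_k|_{z=0}$ in the holomorphy locus of $T^{st}$, and then concede at the end that the individual factors ``will in general have poles at $z=0$ that must cancel against zeros of neighbouring factors.'' The second statement is the correct one: for the roots $\beta_k$ with $h(\beta_k)$ odd the factor in (\ref{T1}) acquires a first-order pole along $\langle\alpha_{r_w(k)}^\vee,\ul z\rangle=0$ whenever $\langle\beta_k^\vee,\chi\rangle+\epsilon_w(k,\chi)+h(\beta_k)-1=0,-2,-4,\dots$, and none of a), b), c) rules this out. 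Your proposal offers no mechanism for the claimed cancellation, and this is precisely the hard point. The paper does not prove the cancellation by elementary $\Gamma$-identities either: it (1) deforms in all $N-1$ variables $\ul z$ so that the factorization makes sense on the full principal series $\fI_B^G\, w^M_{\rm un}\cdot(\tm+2\rho_{\Mc})\otimes\gzz$, (2) writes each factor as a $\Gamma$-ratio times a matrix $M_k(\ul z)$ over $\Q(\ul z)$ and deliberately moves the poles into the rational part, (3) restricts to the line $z\gamma_n$, which lies in no singular hyperplane $\langle\alpha_{r_w(k)}^\vee,\ul z\rangle=k$, and (4) exploits that the minimal $\Ke$-type has multiplicity one in $\fI_P^G\D_\tm$ but multiplicity $t>1$ in the Borel-induced module, so the composite acts on that isotypic space by a $t\times t$ matrix whose relevant first row is $(C_{1,1}(z),0,\dots,0)$, and then quotes \cite{ha-ra}, Prop.\ 7.44 for the regularity of $C_{1,1}$ at $z=0$. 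Without this (or an equivalent) input you have not shown that the rational-function part of the composite can be evaluated at $z=0$, hence neither the holomorphy of the restricted operator nor the rationality of its value.

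A related oversight: your Schur-lemma step treats the composite as a scalar on the minimal $\Ke$-type, but the intermediate modules in the factorization are full principal series in which that $\Ke$-type is not multiplicity-free, so the individual rank-one factors are matrices there, not scalars; this is exactly why the paper needs the first-row argument above. Finally, the $\pi$-bookkeeping should be organized as in the paper: only the $d_U/2$ roots with $h(\beta_k)$ even contribute a factor of $\pi$ (numerator $\Gamma$ at a half-integer times $\Gamma(\tfrac12)$, denominator $\Gamma$ at an integer), while for $h(\beta_k)$ odd the two $\sqrt\pi$'s cancel within the single factor; your ``one $\sqrt\pi$ per root'' count happens to produce the exponent $d_U/2$ but for the wrong reason, and would not survive the correct treatment of the denominators.
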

 \begin{proof} 
  The first assertion is  over $\C$ and follows from results of Speh (See \cite{Speh}). 
  
   For the second we use the standard strategy and write the operator as a product of operators induced from
 intertwining operators   on $\Gl_2.$ Here we have to deal with the problem that some of the operators
 will not be defined because we encounter poles in the backwards operators.
 
 We apply the consideration from section (\ref{Dl}) to $M$ and $M^\prime$, i.e. to the two factors $\Gl_n$ and 
 $\Gl_{n^\prime}$. Especially we introduce the subgroup scheme $\Mc\subset M$ as the product 
 of the two corresponding groups in the two factors.  This also yields the element $w^M_{\rm un}$ in the Weyl group of $M.$  The module $\D_{\tm}$ is  induced from a module
 $\cD_{w^M_{\rm un}\cdot \tm+ 2\rho_{\Mc}}$ and this module is defined as a submodule from $\fI_{\bar B}^{\Mc}w^M_{\rm un}\cdot (\tm+ 2\rho_{\Mc}).$ Hence we get  $\D_{\tm} \subset \fI_{B_M}^M w^M_{\rm un} \cdot \tm+ 2\rho_{\Mc}$ and finally 
 \begin{align}
\fI_P^G \D_{\tm} \into  \fI_B^G  w^M_{\rm un}\cdot( \tm+ 2\rho_{\Mc}).
\end{align}
 Now we can try to extend the intertwining operator $T^{w_P,\rm st}(0) $ to this larger module. This may not be possible. Therefore we apply the usual technique and deform the induced module by a character $\V{\ul \gamma}^{\ul z}=
  \prod_{i=1}^{N-1} \V{\gamma_i}^{ z_i } $ where $\ul z\in \C^{N-1}.$  We consider the extension
 \begin{align}\label{Intext}
  T^{w_P,\rm st}(\ul z) :   \fI_B^G  w^M_{\rm un}\cdot( \tm+ 2\rho_{\Mc})_\R \otimes\gzz\to
   \fI_B^G  w^M_{\rm un}\cdot( \tmp+ 2\rho_{\Mc^\prime})_\R \otimes \gzp
\end{align}
which is given by the following integral:  We write $U_P$ as product of one parameter subgroups
(note that $\alpha_{n^\prime}=\beta_{d_U}, \alpha_n=\beta_1)$
\begin{align}
U_P  = U_{\alpha_{n^\prime}} \times  U_{\beta_{d_U-1}}\times \cdots \times  U_{\alpha_n}
\end{align}
  \begin{align}
  T^{w_P,\rm st}(\ul z) (f)(g) =  \int_{U(\R)} f(w_Pu g) du  =\int \dots \int f(w_P u_{\alpha_{n^\prime}} \dots u_{\alpha_n} g)du_{\alpha_{n^\prime} }\cdots d u_{\alpha_n}
\end{align}
where the measure is the Lebesgue measure which is normalized by the epinglage.
This integral converges for $\Re(z_i)>> 0$  and has a meromorphic extension into the entire complex plane.

 We have by definition
  \begin{align}
 w_P u_{\alpha_{n^\prime}}u_1= s_n u_{\alpha_n} y_{d_U-1}  u_1,
\end{align}

  where $ u_1\in 
 U_1(\R)= U_{\beta_{d_U-1}}\times \cdots U_{\alpha_{i^\prime}}.$ Hence our integral becomes 
 \begin{align}\label{Intstep}
  T^{w_P,\rm st}(\ul z) (f)(g)= \int_{U_1(\R)}    \int_{U_{\alpha_{n }}(\R)} f(s_n u_{\alpha_n} y_{d_U-1}  u_1g ) d u_{\alpha_n} d u_1
\end{align}
The inner integral is an intertwining operator.   We write our induced modules now as induced from characters $\chi$
on the  $\R$ valued points of the torus $T(\R),$ let
 $\chi =w_{\rm un}\cdot( \tm + 2\rho_{\Mc })_\R)$
then
\begin{align}\label{Ia}
\{ g   \mapsto f(g)\} \mapsto  \{g \mapsto \int_{U_{\alpha_n}(\R)}f(s_n u_{\alpha_n}g) du_{\alpha}  \} 
\end{align}
is an intertwining operator 
\begin{align}
T^{\rm st}(s_n, \chi, \ul z) : I_B^G \chi\otimes \gzz
\to I_B^G( s_n \cdot( \chi\otimes \gzz)).
\end{align}
 where of course $s_n\cdot( \chi\otimes \gzz) = s_n(\chi\otimes \gzz) + s_n(\V{\rho})- \V{\rho}.$
 
 This intertwining operator  is now   induced  from an intertwining operator between two $\Sl_2$
modules. Let $\tilde {H}_{\alpha_n}$
 be the reductive subgroup $H_{\alpha_n}\cdot T,$ the group scheme $\tilde {H}_{\alpha_n}$ is then the 
 Levi quotient of a parabolic subgroup $P_{\alpha_n}.$ Let $B_{\alpha_n} $be the Borel subgroup in $\tilde{H}_{\alpha_n}.$
 Then the integral in (\ref{Ia}) also defines an intertwining operator 
 \begin{align}
 T^{\rm st}_{\alpha_n}(s_n, \chi,\ul z):I_{B_{\alpha_n}}^{{\tilde {H}_{\alpha_n}}}\chi \otimes \gzz\to I_{B_{\alpha_n}}^{{\tilde {H}_{\alpha_n}}}(s_n\cdot( \chi\otimes \gzz))\}.
\end{align}
 
  Our two induced modules can by written as two step induction 
 \begin{align}
I_B^G  \chi \otimes \gzz=  I_{P_{\alpha_n}}^G \;I_{B_{\alpha_n}}^{\tilde {H}_{\alpha_n}}(\chi\otimes \gzz) ,
I_B^G s_n\cdot(  \chi\otimes \gzz) =  I_{P_{\alpha_n}}^G\;  I_{B_{\alpha_n}}^{\tilde {H}_{\alpha_n}}(s_n\cdot(\chi\otimes \gzz))  
\end{align}
and   then our intertwining operator is induced
\begin{align}
T^{\rm st}(s_n,\chi, \ul z) =  I_{P_{\alpha_n}}^G  T^{\rm st}_{\alpha_n}(s_n, \chi,  {\ul z}).
\end{align}
Hence we can write equation (\ref{Intstep})
\begin{align}
  T^{w_P,\rm st}(\ul z) (f)(g) = \int_{U_1(\R)}      I_{P_{\alpha_n}}^G ( T^{\rm st}_{\alpha_n}(s_n, \chi  , \ul z) (f)(y_{d_U-1}  u_1g )d u_1
\end{align}
We iterate this process. We have $y_{d_U-1}=s_i s_j\cdots s_{n^\prime} ,$  we apply the above
process again and eventually we  get 
\begin{align}\label{product}
\begin{matrix} T^{w_P,\rm st}(\ul z) (f)  =   I_{P_{\alpha_{n^\prime}}}^G  T^{\rm st}_{\beta_{d_U-1} }(s_{n^\prime},x^{-1}_{d_U-1}\cdot \chi,x^{-1}_{d_U-1} \ul z  ) \circ\dots\cr
\circ  I_{P_{r_w(k)}}^G T^{\rm st}_{r_w(k)} (s_{r_w(k)},x^{-1}_{k-1}\cdot\chi,x^{-1}_{k-1}(\ul z))\circ \cdots\circ
 I_{P_{\alpha_n}}^G T^{\rm st}_{\alpha_n}(s_n,\chi,\ul z)(f)
 \end{matrix}
\end{align}  
We understand the  $\Gl_2$ intertwining operators 
\begin{align}
T^{\rm st}_{r_w(k)} (s_{r_w(k)},x^{-1}_{k-1}\ul z)): I_{B_{r_w(k)}}^{\tilde{H}_{r_w(k)}}  (x_{k-1}^{-1}\cdot \chi\otimes\gzz) \to
   I_{B_{r_w(k)}}^{\tilde{H}_{r_w(k)}} (x_{k }^{-1}\cdot \chi\otimes\gzz )
\end{align}
In section (\ref{IO}) we defined the algebraic intertwining operator 
$T^{alg}_{\lambda^-}$  
  by fixing their value on he lowest $\KK$ type.   If we put $\chi=\lambda^-_\R$ then we can extend these
  operators to the twisted modules 
  \begin{align*}
 T^{alg}(s_\ag,\chi, z): I_{B_{\ag}}^{\Gl_2} \chi\otimes\V{\gamma_\alpha}^z \to  I_{B_{\ag}}^{\Gl_2} s_\ag\cdot( \chi\otimes\V{\gamma_\alpha}^z)
 \end{align*} 
      
  With respect to the basis  $\Phi_\nu$   the operator $ T^{alg}(s_\ag,\chi, z)$ acts as a diagonal matrix with entries in the rational function field $\Q(\ul z)$ and we know the factor that compares $ T^{alg}(s_\ag,\chi, z)$ to $ T^{\rm st}(s_\ag,\chi, z),$
it is a ratio of $\Gamma$ values. To compute $T_{r_w(k)} (s_{r_w(k)},x^{-1}_{k-1}\cdot\chi)$ we need to know the restriction of 
$x^{-1}_{k-1}\cdot\chi$ to the torus $T_{r_w(k)} ,$ we recall that  the coroot $\alpha^\vee_{r_w(k)}:\Gm \to T_{r_w(k)}$
provides an identification. Hence the restriction of $x^{-1}_{k-1}\cdot\chi$ to $T_{r_w(k)}$ is a character on
$\Gm(\R)=\R^\times$ and an easy computation shows that this character is
\begin{align*}
t  \mapsto  t^{<\ak^\vee, x_{k-1}\chi>} \V{t}^{<\ak^\vee, x_{k-1}^{-1} \rho -\rho>+ <\ak^\vee,\ul z>}
\end{align*}
 We still can manipulate the exponent. We have  $x_{k-1}\ak^\vee= \beta_k^\vee. $  Then the first exponent becomes 
 $<\beta^\vee_k, \chi>$ and for the second one we get $<\beta^\vee -\ak^\vee,\rho>= h(\beta_k),$ where
 for $\beta= \alpha_\nu+\dots+ \alpha_{\nu+h}$ we put $h(\beta)=h.$

Hence  our character is
\begin{align}
t  \mapsto  t^{<\beta_k^\vee, \chi>} \V{t}^{h(\beta_k) + <\ak^\vee,\ul z>}
\end{align}
Then we put $\epsilon_w(k,\chi)=0$ if  $<\beta_k^\vee, \chi >\equiv 0\mod 2$ and $\epsilon_w(k,\chi)=1$ 
else. Then we get from our formulae in  section \ref{THC}
\begin{align}\label{T1}
T^{\rm st}_{r_w(k)} (s_{r_w(k)},x^{-1}_{k-1}\cdot\chi,x^{-1}_{k-1}\ul z)
  =\frac{\Gamma(\frac{<\beta_k^\vee, \chi>+\epsilon_w(k,\chi)+h(\beta_k)-1 +<\ak^\vee,\ul z>}{2})} {\Gamma(\frac{ <\beta_k^\vee, \chi>+\epsilon_w(k,\chi)+h(\beta_k)  +<\ak^\vee,\ul z>}{2})}\Gamma(\frac{1}{2}) M_k(\ul z)
\end{align}
where $M_k(\ul z) $ is a diagonal matrix with entries in the field of rational functions $\Q(\ul z),$  it may have a pole
at the hyperplane   $<\ak^\vee,\ul z>=0 $ but
the ratio
\begin{align*}
M^*_k(\ul z)=\frac{M_k(\ul z)}{\Gamma(\frac{ <\beta_k^\vee, \chi>+\epsilon_w(k,\chi)+h(\beta_k)  +<\ak^\vee,\ul z>}{2})} 
\end{align*}
is holomorphic on this hyperplane   and hence can be evaluated at $\ul z=0.$

\bigskip

By definition the number $<\beta_k^\vee, \chi>+\epsilon_w(k,\chi)$ is even  and hence
this operator is holomorphic at $\ul z=0$ if $h(\beta_k)$ is  even. In this case   the character
restricted to the torus $T_{r_w(k)}$ is cohomological.We find
\begin{align}
T^{\rm st}_{r_w(k)} (s_{r_w(k)},x^{-1}_{k-1}\cdot\chi,x^{-1}_{k-1}\ul z)\vert_{\ul z=0}
  = {\Gamma(\frac{<\beta_k^\vee, \chi >+\epsilon_w(k,\chi)+h(\beta_k)-1  }{2})} \Gamma(\frac{1}{2}) M^*_k(0)  
\end{align}
where   $M^*_k(0)$ is a matrix with rational entries and the factor in front is $\pi\times$ a rational number.
This tells us that 
\begin{align}
T^{\rm st}_{r_w(k)} (s_{r_w(k)},x^{-1}_{k-1}\cdot\chi)\vert_{\ul z=0}=  \pi\times  T^{alg}_{r_w(k)} (s_{r_w(k)},x^{-1}_{k-1}\cdot\chi) q_k, \end{align}
where $q_k\in \bar\Q^\times.$

If $h(\beta_k) $ is odd  then the hyperplane $<\ak^\vee,\ul z>=0$
may be a first order pole, this happens exactly when

\begin{align*}
<\beta_k^\vee, \chi_{alg}>+\epsilon_w(k,\chi)+h(\beta_k)-1=0,-2,-4,\dots 
\end{align*}
 We put $m_k=1 $ if $h(\beta_k)$ is odd and we encounter a pole and $m_k=0$ else. Then we manipulate the right hand side  in equation (\ref{T1}) and change it to
 \begin{align}
<\ak,\ul z>^{m_k}\frac{\Gamma(\frac{<\beta_k^\vee, \chi_{alg}>+\epsilon_w(k,\chi)+h(\beta_k)-1 +<\ak^\vee,\ul z>}{2})} {\Gamma(\frac{ <\beta_k^\vee, \chi_{alg}>+\epsilon_w(k,\chi)+h(\beta_k)  +<\ak^\vee,\ul z>}{2})}\Gamma(\frac{1}{2}) \frac{M_k(\ul z)}{<\ak,\ul z>^{m_k}}
\end{align}
the last factor to the right is still a a diagonal matrix with entries in the field $\Q(\ul z).$ The expression 
in values of the Gamma-function can be evaluated at $\ul z=0$ and the result is a rational number, the
two contributions of $\sqrt{\pi}$ cancel.

We return to our factorization of the intertwining operator $T^{w_P,\rm st}(\ul z).$ It is an intertwining operator 
between two Harish -Chandra modules with a $\Q$ structure.  They have a decomposition into  $ \Ke$
types (which are of course $\Q$ vector spaces $ \otimes \C$.) We consider the restriction to a $ \Ke$
type $\vartheta$ which is of course finite dimensional. Then our product decomposition yields
\begin{align}
\bigl(\prod_{k } \frac{<\ak,\ul z>^{m_k}\Gamma(\frac{<\beta_k^\vee, \chi >+\epsilon_w(k,\chi)+h(\beta_k)-1 +<\ak^\vee,\ul z>}{2})} {\Gamma(\frac{ <\beta_k^\vee, \chi >+\epsilon_w(k,\chi)+h(\beta_k)  +<\ak^\vee,\ul z>}{2})}\Gamma(\frac{1}{2})\bigr)
M(\vartheta,\ul z)
\end{align} 
where
\begin{align*}
 M(\vartheta,\ul z) \in \Hom_{\Ke}(  \fI_B^G  w^M_{\rm un}\cdot( \mu+ 2\rho_{\Mc})(\vartheta) ,
  \fI_B^G  w^{M^\prime}_{\rm un}\cdot( \mu^\prime+ 2\rho_{\Mc^\prime})(\vartheta))\otimes\Q(\ul z)
 \end{align*}
 The factor in front  can be evaluated at $\ul z=0.$   Each factor  contributes by a non zero rational number    
or $\pi$ times a non zero rational number. We get a factor $\pi$ in the cases where  $h(\beta_k)$ is even
and this happens $d_U/2$ number of times. So we would be finished with the proof if we evaluate
$ M(\vartheta,\ul z)$ at $\ul z=0$ and observe that this is a matrix with entries rational numbers. But
we do not know whether $M(\vartheta,\ul z)$ can be evaluated at zero, we have moved the poles
in  the Gamma-factors into  $M(\vartheta,\ul z),$   the extended intertwining operator in equation 
(\ref{Intext}) may not be regular at $\ul z=0.$

We are only interested in the restriction of the operator to  $\fI_P^G \D_\tm\otimes \C[\V{\gamma_n}^z].$
(See  (\ref{Intz}) , i.e. we restrict it to the line $z \V{\gamma_n}\subset  \C^{N-1}.$    We notice 
that this line is not contained in any of the hyperplanes $<\ak^\vee,\ul z> =k\in \Z.$  Hence
we see that our operator $M(\vartheta,z)$ is a meromorphic function in the variable $z.$

 The modules $ \fI_P^G \D_\tm $ and $\fI_Q^G {\tmp}$ contain  the special irreducible 
$\Ke$-module $\cX[\tm]$ with highest weight $ {\mu_c(\epsilon,\tilde\mu)}$ with multiplicity one. This $\Ke$ module occurs with higher 
multiplicity $t$  in $\fI_B^G  w^M_{\rm un}\cdot( \tm+ 2\rho_{\Mc})\otimes\C\V{\gamma_n}^z$. Restriction
to this $\Ke$ type yields  a diagram 
\begin{align}
\begin{matrix}
  \cX[\tm])\otimes (z) & \ppfeil{T^{w_P,\rm st}(z)} &    \cX[\tm])\otimes (-z) \cr
\downarrow && \downarrow\cr
( \cX[\tm])^t\otimes (z) &\ppfeil{T^{w_P,\rm st}(z)}& ( \cX[\tm]) ^t \otimes(-z)  
\end{matrix}
\end{align}
where the  downarrows are   the inclusion by the first coordinate.  
Then our  matrix $M(\vartheta_{\mu_c(\epsilon,\tilde\mu)},  z)$ will be an $t\times t$ matrix with entries $C_{l,m}( \vartheta_{\mu_c(\epsilon,\tilde\mu)}, z)$
where  $C_{l,m}(\vartheta_{\mu_c(\epsilon,\tilde\mu)}, z)\in \Q( z).$ We look at this first row, which tells us what happens 
to the first coordinate under $T^{w_P,\rm st}( z). $ This first row is $(C_{1,1}(\vartheta_{\mu_c(\epsilon,\tilde\mu)},  z), 0 \dots, 0).$  
The  rational function  $(C_{1,1}(\vartheta_{\mu_c(\epsilon,\tilde\mu)},  z)\in \Q(z)$  is regular at $ z=0.$ 
(See \cite{ha-ra}, Prop. 7.44).
Therefore we can evaluate the first row at $  z=0.$ The result will be
$((C_{1,1}(\vartheta_{\mu_c(\epsilon,\tilde\mu)},0),  0, \dots, 0 ))$ where $C_{1,1}(\vartheta_{\mu_c(\epsilon,\tilde\mu)},0)\in \Q^\times.$

 \end{proof}
It is clear that the operator $\frac{1}{\pi^{\frac{d_U}{2}}}T^{w_P,\rm st}( 0) $ induces an isomorphism in cohomology

\begin{align}\label{IsoQ}
T^{w_P,\bullet}  : H^\pkt (\fg_\Z  ,\KK,  \fI_{ P}^G   \D_{\tm}\otimes \M_\la )\otimes \Q   \to  H^\pkt (\fg_\Z  ,\KK,  \fI_{ Q}^G   \D_{\tmp} \otimes \M_\la )  \otimes \Q
\end{align}  

We can compute these cohomology groups using Delorme. Let $\KK^{(1),M}  \subset M $ be the  connected maximal definite  group scheme then

$$ \begin{matrix} H^\pkt (\fg_\Z  ,\KK,  \fI_{ P}^G   \D_{\tm}\otimes \M_\la )   \into  \cr H_+^\pkt ( \fm, \KK^{(1),M} , \D_{\tm}\otimes \cM_{\wu\cdot\lambda}) 
\oplus H_-^\pkt ( \fm, \KK^{(1),M} , \D_{\tm}\otimes \cM_{\wu\cdot\lambda}) 
\end{matrix}
$$
and the same holds for the $Q$ part
$$
 \begin{matrix}H^\pkt (\fg_\Z  ,\KK,  \fI_{ Q}^G   \D_{\tmp}\otimes \M^\prime_\la )   \into \cr  H_+^\pkt ( \fm, \KK^{(1),M^\prime} , \D_{\tmp}\otimes \cM_{\wu\cdot\lambda}) 
\oplus H_-^\pkt ( \fm, \KK^{(1),M^\prime} , \D_{\tmp}\otimes \cM_{\wu\cdot\lambda}) 
\end{matrix}
$$
and the inclusion is always an isomorphism to the $+$ component. Now we remember that $M=M_1\times M_2$
one of the factors is a $\Gl_n$ with $n$ even the other is $\Gl_{n^\prime} $ with $n^\prime $ odd.  Then (in the lowest degree)  the 
cohomology $H_+^\pkt ( \fm, \KK^{(1),M } , \D_{\mu }\otimes \cM_{\wu\cdot\lambda}) $ is generated 
by an element $\omega_{\mu_1}^{(e)}\otimes \omega_{\mu_2}$ where $e= 1\text{ or } 2.$ The cohomology 
$H_+^\pkt ( \fm, \KK^{(1),M^\prime} , \D_{\tmp}\otimes \cM_{\wu^\prime\cdot\lambda})  $  is 
generated by $\omega_{\mu^\prime_2} \otimes \omega_{\mu^\prime_1}^{(e^\prime)}$ where $1^\prime=2, 2^\prime=1.$
(This introduces the relative period). Then we get 
 
\begin{align}
 T^{w_P,b_n+b_{n^\prime}} ( \omega_{\mu_1}^{(e)}\otimes \omega_{\mu_2}) = C_{1,1}(\vartheta_{\mu_c(\epsilon,\tilde\mu)},0)\omega_{\mu^\prime_2} \otimes \omega_{\mu^\prime_1}^{(e^\prime)}
 \end{align}
 
 This rationality result is applied in \cite{ha-ra} to prove a rationality result for ratios of critical values
 of Rankin-Selberg $L$ -functions at consecutive critical arguments. We recall from the work of Shahidi \cite{Shahidi}
 that we can attach a local $L$ function $L^{\rm coh}_\infty(\D_{\tm},s)$ to our (representation) Harish-Chandra module 
 $\D_{\tm}.$  (This local $L$-function differs by the usual shift from Shahidis $L$ function.)  Then we can rewrite 
 the formula above into
 \begin{align}
T^{w_P,\rm st}(0)( \omega_{\mu_1}^{(e)}\otimes \omega_{\mu_2}) =c_\infty(\tm) \frac{L^{\rm coh}_\infty(\D_{\tm},\w(\tm)+b(w,\lambda))}{L^{\rm coh}_\infty(\D_{\tm},\w(\tm)+b(w,\lambda)+1)} ( \omega_{\mu^\prime_2} \otimes  \omega_{\mu^\prime_1}^{(e^\prime)})
\end{align}
The local $L-$ function can be expressed in terms of products of functions
  $\Gamma_\C(z)=2(2\pi)^{-s} \Gamma(s) $ (See \cite{ha-ra} ,7.2.1 ) and using this expression we find
  \begin{align}
\frac{L^{\rm coh}_\infty(\D_{\tm},\w(\tm)+b(w,\lambda) )}{L^{\rm coh}_\infty(\D_{\tm},\w(\tm)+b(w,\lambda)+1 )} =\frac{\pi^{d_U/2}}{\prod N_i(w,\tm)}
\end{align}
where $N_i(w,\tm)$ are certain integers (See Cor. 7.33 in \cite{ha-ra}). The combinatorial lemma in \cite{ha-ra} (Appendix by Weselmann)
implies that under the given conditions the   $N_i(w,\tm)\not= 0.$ 

  Our rationality result is then equivalent to the assertion 
that  $c_\infty(\tm)\in \Q^\times.$ It enters in the proof of the main theorem in \cite{ha-ra}.

\subsection{An intriguing question} 
It is certainly possible to prove the    necessary rationality result at the place infinity with somewhat lesser
effort. In \cite{ha-ra} the  relative period is defined after we make some choices of basis vectors in various 
vector spaces (most of the time one dimensional). Then the computation of the intertwining operator
comes down to see its effect on these basis elements and this computation can be carried out quite directly.

We develop the concept  of Harish-Chandra modules over $\Z$ because this gives us some motivation for the
choice of these basis elements. But we can get more profit out of it. We have seen that  the cohomology
modules 
\begin{align}
H^\pkt (\fg_\Z  ,\KK,  \fI_{ P}^G   \D_{\tm}\otimes \M_\la )\otimes \Z[\frac{1}{2}]   ,\; H^\pkt (\fg_\Z  ,\KK,  \fI_{ Q}^G   \D_{\mu^\prime} \otimes \M_\la )  \otimes \Z[\frac{1}{2}] 
\end{align}
in lowest degree are free of rank one and the  intertwining multiplied by $1/\pi^{d_U/2}$ induces an isomorphism
if we tensorize by $\Q.$  But we may also consider the slightly modified operator
\begin{align}
\tilde{T}^{w_P} (\tm)=\frac{L^{\rm coh}_\infty(\D_{\tm},\w(\tm)+b(w,\lambda)+1)}{L^{\rm coh}_\infty(\D_{\tm},\w(\tm)+b(w,\lambda) )})T^{w_P,\rm st}(0)
\end{align}
which also induces an isomorphism between the two modules after we tensor them by $\Q.$  We ask the question

 {\it Is the modified operator }
 \begin{align}
\tilde{T}^{w_P} (\tm): H^\pkt (\fg_\Z  ,\KK,  \fI_{ P}^G   \D_{\tm}\otimes \M_\la )\otimes \Z[\frac{1}{2}] \to H^\pkt (\fg_\Z  ,\KK,  \fI_{ Q}^G   \D_{\tm^\prime} \otimes \M_\la )  \otimes \Z[\frac{1}{2}] 
\end{align}
{\it an isomorphism ?}

This is of course equivalent with the assertion $c_\infty(\tm)\in \Z[\frac{1}{2}]^\times.$
The only non trivial case where we know 
that this true is 
the case $N=3.$ (See \cite{Za}).

A similar question is discussed in my preprint "Secondary Operations in the Cohomology of
Harish-Chandra Modules" (\cite{ha-ra}, folder "Eisenstein", SecOps.pdf)

   \subsection{Fixing the periods}
  In \cite{ha-ra} the authors prove a rationality result for   ratios of consecutive  special values of Rankin-Selberg
  $L-$ functions. In this rationality result a certain relative period $\Omega(\tilde{\sigma}_f)$ enters, this period  is 
  basically a non zero complex number which is defined modulo $E^\times$ where $E$ is a number  field
  over which $\tilde{\sigma}_f$ is defined. We will show here that we can make this choice of periods more
  precise so that they are essentially defined modulo  the units $\cO_E^\times.$ (For a more precise statement
  see further down.)  This allows us to speak of the prime factorization of the ratios of critical values (divided by the period)
  and this is    of arithmetic interest. 
    These considerations are not included into \cite{ha-ra}  because the authors where concerned that the paper
  may become too long.

   Assume $n$ even and $G=\Gl_n/\Z,$ for simplicity we assume $F=\Q.$ We consider the  inner  cohomology
   $H_{!}^\pkt(\SGK, \M_{\lambda,\Z}).$ This is a finitely generated $\Z$ module   $H_{!,\ganz}^\pkt(\SGK, \M_{\lambda,\Z}) $ 
   is its quotient by torsion. We have an action of the "integral" Hecke algebra on these cohomology groups (See \cite{harder-book},  Chap.3, 2.3. ) If we extend $\Q$ to  finite extension $E/\Q$ and tensor by the ring of integers $\cO_E$ then we get a decomposition 
   up to isogeny (See \cite{harder-book} , 2.3.9.)
   \begin{align}
H^\pkt_{!,\ganz}(\SGK, \M_{\lambda,\cO_E}) \supset \bigoplus_{\pi_f\in \Coh(G,\lambda,K_f)} H^\pkt_{!,\ganz}(\SGK, \M_{\lambda,\cO_E})(\pi_f)
\end{align}
We have the action of $\pi_0(\Gl_n(\R))$ on these cohomology groups and after inverting 2 we get an isomorphism
\begin{align*}
 H^\pkt_{!,\ganz}(\SGK, \M_{\lambda,\cO_E[\frac{1}{2}]})(\pi_f)(+)\oplus  H^\pkt_{!,\ganz}(\SGK, \M_{\lambda,\cO_E[\frac{1}{2}]})(\pi_f)(-)
\end{align*}
If a summand $\pi_f$ is strongly inner (See \cite{ha-ra}, 5.1)  and if we tensor by  $\Q$  the two summands become isomorphic 
$\HH_{K_f}^G$ modules.  If $S$ is a finite set of primes containing the primes where $K_f$ is ramified
then 
   $\HH_{K_f}^{G,S}=\prod_{p\not\in S}\HH_{K_p}^G$
  is a central sub algebra of $\HH_{K_f}^G.$   (See \cite{harder-book}, 2.3.2).   We say that $\pi_f$ is weakly split by $E$
  if the restriction of $\pi_f$ to $\HH_{K_f}^{G,S}$ is a homomorphism $\psi^S(\pi_f) : \HH_{K_f}^{G,S} \to \cO_E.$
  (The eigenvalues of the Hecke operators outside $S$ lie in $\cO_E.$)  We define   
    \begin{align*}
    \begin{matrix}
 H^\pkt_{!,\ganz}(\SGK, \M_{\lambda,\cO_E[\frac{1}{2}]})(\psi(\pi_f),\epsilon)=\cr
 \{\xi\in    H^\pkt_{!,\ganz}(\SGK, \M_{\lambda,\cO_E[\frac{1}{2}]})(\epsilon)\;\vert\;
 h\xi=\psi(\pi_f)(h)\xi \text { for all } h\in \HH_{K_f}^{G,S}\}.
\end{matrix}
\end{align*}
Since our group is $\Gl_n$
  we have strong multiplicity one.  It follows that the isomorphism type $\pi_f$ is uniquely determined 
  by $\psi(\pi_f)$ and it  is absolutely irreducible, more precisely 
  \begin{align}\label{piS}
 H^\pkt_{!,\ganz}(\SGK, \M_{\lambda,\cO_E[\frac{1}{2}]}) (\pi_f ,\epsilon)= H^\pkt_{!,\ganz}(\SGK, \M_{\lambda,\cO_E[\frac{1}{2}]})(\psi(\pi_f),\epsilon)
\end{align} 
If we define $E(\pi_f)\subset  E$ to be the subfield of $E$ which is generated by the values $\psi(\pi_f)(h)$ (it is independent of the choice of $S$) then
\begin{align}
 H^\pkt_{!,\ganz}(\SGK, \M_{\lambda,\cO_E[\frac{1}{2}]})(\pi_f)(\epsilon)= H^\pkt_{!,\ganz}(\SGK, \M_{\lambda,\cO_{E(\pi_f)}[\frac{1}{2}]})(\pi_f)(\epsilon)\otimes_{\cO_{E(\pi_f)}[\frac{1}{2}]}\cO_E.
\end{align}
 
  If $\pi_f$ is strongly inner then the cohomology  modules in lowest degree
  \begin{align*}
H_{!!}^{b_n}(\SGK, \M_{\lambda,\cO_E})(\pi_f,\epsilon)
\end{align*}
  are absolutely irreducible $  \HH_{K_f}^G$ modules (See \cite{ha-ra},3.3.3)  (This also means that  the homomorphism
  $\HH_{K_f}^G\to \End_E(H^{b_n}(\SGK, \M_{\lambda,E})(\pi_f,\epsilon))$ is surjective.) The module of homomorphisms
  \begin{align}
{\cal T}^{\rm alg}(\pi_f,\epsilon)= \Hom_{\HH_{K_f}^G}(H_{!!,\ganz}^{b_n}(\SGK, \M_{\lambda,\cO_E})(\pi_f,\epsilon),  H_{!!,\ganz}^{b_n}(\SGK, \M_{\lambda,\cO_E})(\pi_f,-\epsilon))
\end{align}
  is a finitely generated, torsion free $\cO_E$ module of rank one. We consider it as an invertible sheaf for the
  Zariski topology  
  on $\Spec(\cO_E).$ For any open subset $U\subset  \Spec(\cO_E)$ we use the usual notation ${\cal T}^{\rm alg}(\pi_f,\epsilon)(U)$  for the module of sections over $U,$ this is a module for $\cO(U).$ It is clear 
that we   find a covering of $\Spec(\cO_E)$ by two open sets $U_1, U_2$ such that 
$${\cal T}^{\rm alg}(\pi_f,\epsilon)(U_i)=
  \cO_E(U_i) T_i^{\rm alg}(\pi_f,\epsilon)$$ where 
   $$T_i^{\rm alg}(\pi_f,\epsilon)\in \Hom_{\HH_{K_f}^G}(H_{!!,\ganz}^{b_n}(\SGK, \M_{\lambda,\cO_E})(\pi_f,\epsilon),  H_{!!,\ganz}^{b_n}(\SGK, \M_{\lambda,\cO_E})(\pi_f,-\epsilon)).$$ These homomorphisms
  are unique up to an element in $\cO_E(U_i)^\times.$ 
  
  For a given level $K_f$ we can find   a finite Galois extension $E/\Q$  such that all $\pi_f$ which occur in $H^\pkt(\SGK, \M_{\lambda,E})$  
 are weakly split  or what amounts to the same absolutely irreducible. We have the action of the Galois group
 $\Gal(E/\Q)$ on the set of isomorphism classes $\Coh(G, \lambda,K_f).$
 
  For a given $\pi_f$  we choose a covering $\Spec(\cO_{E(\pi_f)})$   by open subsets $U_1,U_2$ and generators  $T_i^{\rm alg}(\pi_f,\epsilon)(U_i).$  Let us call
 such a choice a {\it local trivialization} of ${\cal T}^{\rm alg}(\pi_f,\epsilon)$. Then it is clear that we choose our 
 trivialization such that it is invariant under the action of the Galois group: To get this  we choose  a $\pi_f$ in an orbit 
 and our local trivialization $\{T_i^{\rm alg}(\pi_f,\epsilon)\}_{i=1,2}$ over $\cO_{E(\pi_f)}$ as above. For 
 $\tau\in \Gal(E/\Q)$
  we 
 define 
 $$  
 T_i^{\rm alg}(\tau(\pi_f),\epsilon) =\tau(T_i^{\rm alg}(\pi_f,\epsilon))
  $$ 

  and then this system of local trivializations 
$$  \{  T_i^{\rm alg}(\tau(\pi_f),\epsilon) \}_{\pi_f\in \Coh_{!!}(G,\lambda,K_f), \epsilon,i }
  $$
  is defined over $\Q,$ i.e. invariant under the Galois group $\Gal(E/\Q).$

We return to the transcendental level. We assume $\pi_f\in \Coh_{!!}(G,\lambda,K_f)$ we choose a model space for
$\pi_f$   say $H_{\pi_f}=  H_{!!}^{b_n}(\SGK, \M_{\lambda,\cO_E})(\pi_f,+).$ For any $\iota: E\to \C$  we get
an inclusion 
\begin{align}
\Phi(\lambda,\pi_f,\iota):  \D_\lambda \otimes H_{\pi_f}\otimes_{E,\iota}\C \into \cA(G(\Q)\backslash G(\A))
\end{align}
which is Hecke equivariant   and therefore unique up to a scalar.
  This map provides an isomorphism for the cohomology 
   $$
    \Lambda^\bullet(\Phi(\lambda,\pi,\iota)): H^\bullet(  \fg_\Z/\fk_\Z , \D_{ \lambda}\otimes \M_{ \lambda})\otimes H_{\pi_f}\otimes_{E,\iota} \C  \iso H^\bullet(\SGK, \M_{\lambda,\C})(\pi_f)
     $$
which respects the action of $\pi_0(\Gl_n(\R)).$ We have seen in section  \ref{sec:HGK} that we have 
canonical generators for the $+$ and $-$ eigenspaces (See (\ref{genpm}))
\begin{align*}
 H^{b_n}(  \fg_\Z/\fk_\Z , \D_{ \lambda}\otimes \M_{ \lambda,\C}) =\C\omega_\lambda^{(+)}\oplus \C\omega_\lambda^{(-)}.
\end{align*}
( The choice of the generators was motivated by integrality considerations and in this sense they are canonical. 
But using the explicit description  we could just write them down in an ad hoc manner.   The actual choice
is not so important, what really matters is that they are "entangled", this means once we choose $\omega_\lambda^{(+)}$
the choice of $ \omega_\lambda^{(-)}$ is forced upon us. See \cite{ha-ra}, 5.2 ) 

For $\epsilon=\pm$ we have the two isomorphisms
\begin{align*}
H_{\pi_f}\otimes_{E,\iota}\C  \ppfeil{\Psi(\lambda,\pi_f,\epsilon,\iota)} H^{b_n}(\SGK,\M_{\lambda,E})(\pi_f,\epsilon)\otimes_{E,\iota}\C
\end{align*}
which are given by the composition $\psi_f\mapsto \omega_\lambda^{(\epsilon)}\times \psi_f\mapsto \Lambda^{b_n}(\Phi(\lambda,\pi_f,\iota)( \omega_\lambda^{(\epsilon)}\times \psi_f).$
We get a composition $T^{\text{trans}}(\pi_f,\iota,\epsilon)=\Psi(\lambda,\pi_f,\epsilon,\iota)^{-1}\circ \Psi(\lambda,\pi_f,-\epsilon,\iota)$
which is an isomorphism
\begin{align*}
T^{\text{trans}}(\pi_f,\iota,\epsilon):  H^{b_n}(\SGK,\M_{\lambda,E})(\pi_f,\epsilon)\otimes_{E,\iota}\C\to 
 H^{b_n}(\SGK,\M_{\lambda,E})(\pi_f,-\epsilon)\otimes_{E,\iota}\C
\end{align*}
and this isomorphism does not depend on the choice of the embedding   $\Phi(\lambda,\pi_f,\iota).$  For
$i=1,2$ we 
define the periods by comparing the two isomorphism between the $\pm$ eigenspaces (See also \cite{ha-ra}, 5.2.3):
\begin{align}
\Omega_i(\pi_f,\iota,\epsilon )T^{\text{trans}}(\pi_f,\iota,\epsilon)=   T_i^{\rm alg}(\pi_f,\epsilon)\otimes_{E,\iota} 1
\end{align}
The periods $\Omega_i(\pi_f,\iota,\epsilon ) $ are complex numbers which are well defined modulo
$\iota(\cO_{ (E(\pi_f)} (U_i))^\times$ and the ratio $\Omega_1(\pi_f,\iota,\epsilon ) /\Omega_2(\pi_f,\iota,\epsilon ) $
is an element in   $\iota(\cO_{E(\pi_f)})(U_1\cap U_2)^\times.$
 
 If we now work with this refined definition of the periods  the assertion in Theorem 7.39 in \cite{ha-ra}
 $$
\frac{1}{\Omega^{\varepsilon'}({}^\iota\!\sigma_f)} \, 
\frac{L^{\rm coh}\left(\iota, \sigma \times \sigma'^{\vv}, {\sf m}_0\right)}
{L^{\rm coh}\left(\iota , \sigma \times \sigma'^{\vv}, 1+{\sf m}_0\right)}
\ \in  \ 
\iota(E). 
$$
remains unchanged but now it makes sense to ask for the decomposition of these numbers into 
prime ideals. We have evidence that this  decomposition  into prime factors 
has some influence on the structure of cohomology of arithmetic groups.  The prime factors 
should be related to denominators of Eisensteinclasses but this relationship could be spoiled
by primes dividing the factor $c_\infty(\tm)$ above. (See also the above reference to SecOps.pdf
in \cite{harder-book}).

\end{document}